\newtheorem{theorem}{Theorem}[section]
\newtheorem{remark}{Remark}
\newtheorem{lemma}{Lemma}[subsection]
\newtheorem{assump}{Assumption}[section]
\numberwithin{equation}{section}
\newcommand{\sg}{\sigma}
\newcommand{\otl}[2]{\frac{d #1}{d #2}}
\newcommand{\ptl}[2]{\ensuremath{\frac{\partial #1}{\partial #2}}}
\newcommand{\ptld}[2]{\ensuremath{\frac{\partial^{2} {#1}}{\partial {#2}^{2}}}}
\newcommand{\inc}[2]{\includegraphics[width=#2]{./figures/#1}}
\newcommand{\etal}{\textit{et al. }}
\newcommand{\RR}{\ensuremath{\mathbb{R}}}
\newcommand{\bb}[1]{\ensuremath{\mathbb{#1}}}
\newcommand{\bfa}[1]{\mbox{\boldmath $ #1 $}}
\newcommand{\Qgm}{\ensuremath{Q^{\Gamma}_{T}}}
\newcommand{\Qxi}{\ensuremath{Q^{\Xi}_{T}}}
\newcommand{\Qod}{\ensuremath{Q^{\partial \mathcal{D}}_{T}}}
\newcommand{\Qom}{\ensuremath{Q^{\Omega}_{T}}}
\title{A Conservative Finite Element ALE Scheme for Mass-Conserving Reaction-Diffusion Equations on Evolving Two-Dimensional Domains}
\author{J. A. Mackenzie$^{*}$\hspace{0.25cm} C. F. Rowlatt\footnote{Department of Mathematics and Statistics, University of Strathclyde, Glasgow G1 1XH. Corresponding author JAM: email j.a.mackenzie@strath.ac.uk}\hspace{0.35cm} R. H. Insall\footnote{Cancer Research UK Beatson Institute, Garscube Estate, Switchback Road, Glasgow, G61 1BD.}}
\begin{document}
  \maketitle

  \begin{abstract}
  	Mass-conservative reaction-diffusion systems have recently been proposed as a general framework to describe intracellular pattern formation. These systems have been used to model the conformational switching of proteins as they cycle from an inactive state in the cell cytoplasm, to an active state at the cell membrane. The active state then acts as input to downstream effectors. The paradigm of activation by recruitment to the membrane underpins a range of biological pathways – including G-protein signalling, growth control through Ras and PI 3-kinase, and cell polarity through Rac and Rho; all activate their targets by recruiting them from the cytoplasm to the membrane. Global mass conservation lies at the heart of these models reflecting the property that the total number of active and inactive forms, and targets, remains constant. Here we present a conservative arbitrary Lagrangian Eulerian (ALE) finite element method for the approximate solution of systems of bulk-surface reaction-diffusion equations on an evolving two-dimensional domain. Fundamental to the success of the method is the robust generation of bulk and surface meshes. For this purpose, we use a moving mesh partial differential equation (MMPDE) approach. Global conservation of the fully discrete finite element solution is established independently of the ALE velocity field and the time step size. The developed method is applied to model problems with known analytical solutions; these experiments indicate that the method is second-order accurate and globally conservative. The method is further applied to a model of a single cell migrating in the presence of an external chemotactic signal.
  \end{abstract}

  \begin{center}
    \small{{\bf Keywords}}
  \end{center}
  \vspace{-0.25cm}
  \noindent
  {\small Mass-conservative reaction-diffusion systems, cell migration, chemotaxis, conservation, evolving finite elements, ALE methods, moving mesh methods.}
  \vspace{0.25cm}
  \begin{center}
    \small{{\bf AMS Subject Classification}}:
    {\small 35K57, 35K61, 65M12, 65M60, 92C17.}
  \end{center}

  \section{Introduction}
  \label{sec:intro}
  Signal processing at the cell membrane is of fundamental importance for eukaryotic cells and is essential in various biological processes, such as embryonic development, immune cell function and cancer metastasis. Signalling is frequently controlled by redistributing components from an inactive form, present in the cytoplasm, to the membrane. This spatial redistribution may of itself be sufficient to activate them (by bringing them into proximity with their targets, for example) or it may promote their activation (by bringing them within reach of an activating kinase, for example). Although a wide variety of biochemical processes, molecules and proteins are involved in signal processing, and subsequent cell polarisation (establishment of a front and a back), recent studies suggest that the Rho family of GTPases (which include RhoA, Cdc42 and Rac) and their effectors are of key importance. Signalling mediators such as the Rho GTPases are small, monomeric, proteins which behave like molecular switches, flipping between inactive and active conformations \cite{edelstein-keshet_2016}. The inactive forms, which are bound to guanosine diphosphate (GDP), predominate in resting cells. Moreover, within the cytoplasm the inactive forms are sequestered by guanine nucleotide dissociation inhibitors (GDI), which act as inhibitors of activation. The active forms, which are bound to guanosine triphosphate (GTP), bind strongly to effectors (for example the kinase PAK). As the effectors are recruited to the membrane they too become activated. Together the Rho GTPases and their effectors trigger downstream signalling events, such as actin polymerisation via the Arp2/3 complex, which lead to cell migration. Over the time scales of polarisation and cellular migration, the Rho family of GTPases are conserved quantities in that the total amount of active (membrane bound) and inactive (cytoplasmic and membrane bound) species remains constant. This suggests that the correct mathematical framework for modelling GTPase activity is mass-conservative reaction-diffusion (McRD) systems \cite{halatek_2018-1, halatek_2018}.
  
  
  Early McRD models were based on the assumption that the cell could be represented as a one-dimensional slice through the cell from front to back and that the different physical locations of the protein conformations were accounted for by the use of different diffusivities, i.e the membrane bound conformations having a smaller diffusivity compared to the cytoplasmic conformations. Examples of models of this type include Narang \etal \cite{narang_2001}, Subramanian and Narang \cite{subramanian_2004}, Otsuji \cite{otsuji_2007}, and Mori \etal \cite{mori_2008}. Recent research has tried to more faithfully account for the different spatial locations of the inactive and active GTPases.  So-called bulk-surface models consist of a combination of equations posed on the cell membrane (surface region), equations for cytoplasmic diffusion (bulk region), and Robin-type flux boundary conditions coupling the membrane bound and cytoplasmic species. For example, Cusseddu \etal \cite{cusseddu_2019} recently proposed a bulk-surface extension of the classical wave-pinning (WP) model \cite{mori_2008} to investigate cell polarisation in three dimensions.  Giese \etal \cite{giese_2015, giese_2018} constructed a two-dimensional model to study the effect of cell size and cell shape on the signalling pathway from the cell membrane to the nucleus. Diegmiller \etal \cite{diegmiller_2018} used a bulk-surface model to look at pattern formation on the surface of a spherical geometry. The effect of geometry on a pattern formation in a two-dimensional setting was also investigated using a bulk-surface model by Thalmeier \etal \cite{thalmeier_2016}.  All of the models have become amenable to numerical computation using recent developments in numerical techniques to approximate the solution of PDEs on surfaces \cite{dziuk_2007, dziuk_2013, elliott_2012, elliott_2012-1, elliott_2013, madzvamuse_2015, madzvamuse_2016}. 
  
  One area where bulk-surface McRD models have not been applied extensively is the study of cell migration. The main reason for this is the additional computational challenges which arise from the time dependence of the bulk and surface domains and the need for the numerical solution to be mass-conservative. In \cite{novak_2014} the authors presented a finite volume method using a fixed background mesh. As their method is based on a finite volume spatial discretisation, by construction it is both locally and globally conservative. However, the spatial rate of convergence was seen to be less than second order on a range of test cases with known analytical solutions. Strychalski \etal \cite{strychalski_2010-1, strychalski_2010} used a cut-cell approach, again with a fixed background mesh, and applied their method to a conservative reaction-diffusion system on an evolving domain. However, conservation was not maintained at the discrete level although the error in conservation could be reduced by grid refinement. Other popular numerical methods include: immersed boundary methods (see e.g. \cite{vanderlei_2011, campbell_2017, campbell_2018}); and phase-field methods (see e.g. \cite{levine_2006, camley_2013, moure_2017}).
  
One approach to ensure global conservation is to use domain fitted meshes which follow the evolving cell membrane and cytoplasm. In \cite{macdonald_2016} we introduced a moving finite element method for the solution of bulk-surface PDEs on evolving two-dimensional bulk domains. An adaptive moving mesh based on the solution of moving mesh PDEs  \cite{huang_1994, huang_1998} was coupled to a novel aproach for moving a mesh on an evolving curve. The moving mesh method was further extended to higher degree of temporal accuracy and applied to forced curve shortening problems in \cite{mackenzie_2019}. The main aim of the work presented here is to investigate the mass conservation property of a fully discrete finite element approximation of an ALE reformulation of a McRD bulk-surface system on an evolving domain. For efficiency and accuracy it is also important that the grid generation procedure is robust. We therefore also present a refinement of the finite element technique in \cite{macdonald_2016} to solve the MMPDEs which results in a much more robust procedure. 
  
The layout of the rest of this paper is as follows: In Section \ref{sec:rd} we introduce a class of bulk-surface reaction-diffusion systems on evolving domains and in Section \ref{sec:weakale} we present a weak ALE reformulation and show that the continuous system is globally conservative independent of the ALE velocity. A conservative FEM discretisation is introduced in Section \ref{sec:fem} and a second-order two-stage time integration scheme is given in Section \ref{sec:tdisc} for the numerical solution of the coupled ODE system obtained after spatial discretisation. The grid generation procedure is discussed in Section \ref{sec:meshgen}. In Section \ref{sec:conserv} we prove the discrete FEM scheme is globally conservative independently of the ALE velocity and time step size. In Section \ref{sec:nexp} some numerical experiments verifying the global conservation properties of the FEM scheme, as well as spatial convergence rates, are given. We finally apply the method to a bulk-surface form of the WP model \cite{cusseddu_2019} for cell polarisation and chemotaxis on a moving domain.

  \section{Reaction-diffusion systems on evolving domains}
  \label{sec:rd}
    The physical layout for the cell migration simulations considered later is shown in Fig. \ref{fig:move_domains}. We assume that the cell moves through a fixed frame of reference $\Lambda$. Let the time interval be denoted by $I = (0,T]\subset \mathbb{R}$ such that the closure is denoted $\overline{I} = [0,T]$. For each $t\in \overline{I}$, let $\mathcal{D}(t)\subset \mathbb{R}^2$ be a simply connected domain containing a smooth, closed curve $\Gamma(t)$ which separates $\mathcal{D}(t)$ into an interior region $\Omega(t)\subset \mathbb{R}^2$ and an exterior region $\Xi(t)\subset \mathbb{R}^2$ such that $\mathcal{D}(t) = \Omega(t) \cup \Xi(t)$. Note that $\partial \Omega(t) = \Gamma(t)$ and $\partial \Xi(t) = \Gamma(t) \cup \partial\mathcal{D}(t)$, where $\partial \mathcal{D}(t)$ denotes the outer boundary of the exterior domain $\Xi(t)$. To improve computational efficiency, the governing equations for the extracellular region will only be computed over $\Xi(t)$, which is centred on the centroid of $\Omega(t)$.
    
    \begin{figure}[t]
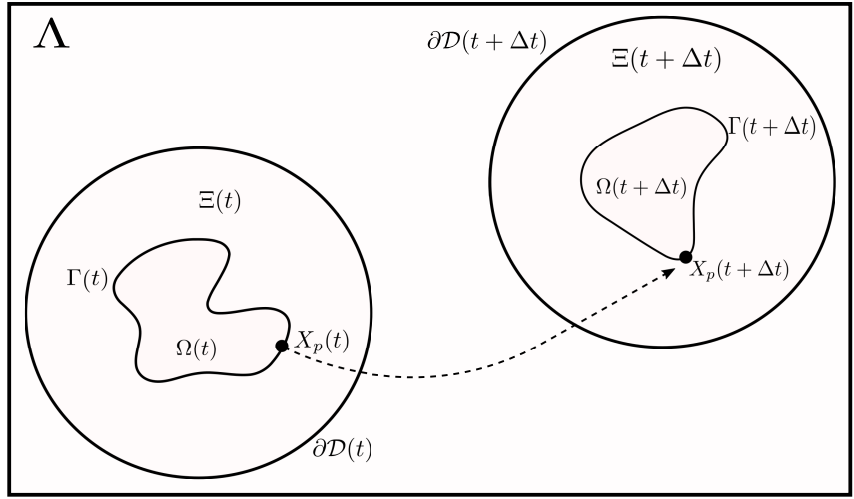

      \centering
    	  \inc{setup-moving.pdf}{0.7\textwidth}
    	  \caption{We consider the simulation of a motile cell through a fixed lab frame of reference $\Lambda$. The exterior region is denoted by $\Xi(t)$ and the interior region is denoted by $\Omega(t)$. The exterior and interior regions are separated by the curve $\Gamma(t)$. After a time interval of size $\Delta t$, the material point located at ${\bfa X}_{p}(t)$ on $\Gamma(t)$ evolves to the new location ${\bfa X}_{p}(t+\Delta t)$ on $\Gamma(t + \Delta t)$.}
	  \label{fig:move_domains}
	\end{figure}   

	We will assume that a material particle $P$ located at ${\bfa X}_{P}(t) \in \Gamma(t)$ has velocity $\dot{\bfa X}_{P}(t) \in \mathbb{R}^{2}$. Therefore, we assume that there exists a velocity field ${\bfa u}_{\Gamma}: \Gamma(t)\times \overline{I} \to \mathbb{R}^{2}$ so that material points on $\Gamma(t)$ evolve with a velocity field 
	\[
	  \dot{\bfa X}_{P}(t) = {\bfa u}_{\Gamma}({\bfa X}_{P}(t),t).
	\]
	Similarly, material points on the outer boundary $\partial\mathcal{D}(t)$, in the interior region and exterior region will be assumed to have material velocities ${\bfa u}_{\partial\mathcal{D}}: \partial\mathcal{D}(t)\times \overline{I} \to \mathbb{R}^{2}$, ${\bfa u}_{\Omega}: \overline{\Omega(t)}\times \overline{I} \to \mathbb{R}^{2}$ and ${\bfa u}_{\Xi}: \overline{\Xi(t)}\times \overline{I} \to \mathbb{R}^{2}$, respectively. 
	
	Let ${\bfa n}=(n_{1},n_{2})$ denote the unit outward normal to $\Gamma(t)$ and let ${\cal N}(t)$ be any open subset of $\RR^{2}$ containing $\Gamma(t)$. For any function $\zeta$ which is differentiable in ${\cal N}(t)$, we define the tangential gradient on $\Gamma(t)$ by $\nabla_{\Gamma}\zeta=\nabla \zeta -(\nabla \zeta\cdot {\bfa n}){\bfa n}$, where $\cdot$ denotes the usual scalar product and $\nabla \zeta$ denotes the usual gradient on $\RR^{2}$. For a vector function ${\bfa \zeta}=(\zeta_{1},\zeta_{2})\in \RR^{2}$, the tangential divergence is defined by 
	\[
	  \nabla_{\Gamma}\cdot {\bfa \zeta}=\nabla \cdot {\bfa \zeta}-\sum_{i=1}^{2}(\nabla\zeta_{i}\cdot {\bfa n})n_{i}.
	\]
	The Laplace-Beltrami operator on $\Gamma(t)$ is defined as the tangential divergence of the tangential gradient $\Delta_{\Gamma}\zeta=\nabla_{\Gamma}\cdot (\nabla_{\Gamma}\zeta)$. Furthermore, let us define 
    \begin{subequations}
    	  \begin{align}
      \label{eq:rd-qdefn-xi}
        \Qxi &= \left\{ ({\bfa x},t)\in \mathbb{R}^2 \colon {\bfa x}\in \Xi(t), t\in I \right\}, \\
      \label{eq:rd-qdefn-omega}
        \Qom &= \left\{ ({\bfa x,t})\in \mathbb{R}^2 \colon {\bfa x}\in \Omega(t), t\in I \right\}, \\
      \label{eq:rd-qdefn-gamma}
        \Qgm &= \left\{ ({\bfa x},t)\in \mathbb{R}^2 \colon {\bfa x}\in \Gamma(t), t\in I \right\}, \\
      \label{eq:rd-qdefn-oD}
        \Qod &= \left\{ \left( {\bfa x}, t \right) \in \mathbb{R}^2 \colon {\bfa x}\in \partial \mathcal{D}(t), t \in I \right\}.
      \end{align}
      \label{eq:rd-qdefn}
    \end{subequations}
    The closure of the sets defined in \eqref{eq:rd-qdefn} will be denoted $\overline{\Qxi}$, $\overline{\Qom}$, $\overline{\Qgm}$ and $\overline{\Qod}$, respectively, where it is understood that the closure occurs in both space and time.
    
    In the exterior domain $\Xi(t)$, we consider $N_l \in \mathbb{N}$ freely diffusing chemical species whose concentrations are given by $l^{(i)}: \overline{\Qxi} \to \mathbb{R}$, $i = 1,\ldots,N_l$. The vector of concentrations of the $N_l$ exterior bulk species is given by 
    \[
    	  {\bfa l}: \overline{\Qxi} \to \mathbb{R}^{N_l}, \qquad {\bfa l} = \left( l^{(1)}, \ldots, l^{(N_l)} \right)^{T}.
    	\]
    	Excluding cross-diffusion and coupling the reaction-diffusion systems by the reaction-kinetics only, these concentrations evolve according to the general advection-reaction-diffusion equation
    \begin{subequations}
    \begin{align}
    \label{eq:rd-ext-bulk-eq}
    & \frac{\partial l^{(i)}}{\partial t} + \nabla  \cdot \left( l^{(i)} {\bfa u}_{\Xi} \right) = D_l^{(i)} \Delta l^{(i)} + f_l^{(i)}({\bfa l}), \qquad ({\bfa x},t)\in \Qxi, \qquad i = 1,\ldots,N_l, \\
    \label{eq:rd-ext-bulk-obcs}
    & l^{(i)} = l_{\mathcal{D}}^{(i)} \quad \mbox{ or } \quad D_l^{(i)} \nabla l^{(i)} \cdot {\bfa n}_{\mathcal{D}} + l^{(i)}\left[ \left( {\bfa u}_{\partial\mathcal{D}} - {\bfa u}_{\Xi} \right) \cdot {\bfa n}_{\mathcal{D}} \right] = 0, \qquad ({\bfa x},t) \in \Qod, \\
    \label{eq:rd-ext-bulk-ibcs}
    & D_l^{(i)} \nabla l^{(i)} \cdot {\bfa n}_{\Xi} + l^{(i)} \left[ \left( {\bfa u}_{\Gamma} - {\bfa u}_{\Xi} \right) \cdot {\bfa n}_{\Xi} \right] = g^{(i)}({\bfa l}) + \hat{g}^{(i)}({\bfa l}, {\bfa l}_s), \qquad\quad\! ({\bfa x},t)\in \Qgm,
    \end{align}
    \label{eq:rd-ext-bulk}
  \end{subequations}
  where $D_{l}^{(i)}$ is the diffusion coefficient of the $i$-th exterior chemical species; the material velocities of $\Xi(t)$, $\Gamma(t)$ and $\partial\mathcal{D}(t)$ are denoted by ${\bfa u}_{\Xi}$, ${\bfa u}_{\Gamma}$ and ${\bfa u}_{\partial\mathcal{D}}$, respectively; the function $f_l^{(i)}: \Qxi\times \mathbb{R}^{N_l} \to \mathbb{R}$ describes the reaction kinetics between the interacting exterior bulk species ${\bfa l}$; the function $g^{(i)}: \Qgm \times \mathbb{R}^{N_l} \to \mathbb{R}$ describes the boundary conditions for the interacting exterior bulk species; and the function $\hat{g}^{(i)}: \Qgm \times \mathbb{R}^{N_l}\times \mathbb{R}^{N_{l_s}} \to \mathbb{R}$ describes the interaction between the exterior bulk $({\bfa l})$ and surface $({\bfa l}_{s})$ species. The normals ${\bfa n}_{\mathcal{D}}$ and ${\bfa n}_{\Xi}$ are the outward unit normals to $\Xi(t)$ from $\partial\mathcal{D}(t)$ and $\Gamma(t)$, respectively.
  
  On the surface $\Gamma(t)$, we consider $N_{l_s} \in \mathbb{N}$ chemical species that are free to diffuse tangentially along the surface $\Gamma(t)$ and whose concentrations are given by $l_s^{(j)}: \overline{\Qgm} \to \mathbb{R}$, $j = 1,\ldots,N_{l_s}$. The vector of concentrations of the surface species is given by
  \[
    {\bfa l}_s: \overline{\Qgm} \to \mathbb{R}^{N_{l_s}}, \qquad {\bfa l}_s = \left( l_s^{(1)}, \ldots, l_s^{(N_{l_s})} \right)^T.
  \]
  These concentrations evolve according to the general surface advection-reaction-diffusion equation
  \begin{align}
  \label{eq:rd-ext-surf}
  & \frac{\partial l_s^{(j)}}{\partial t} + \nabla_{\Gamma} \cdot \left( l_s^{(j)} {\bfa u}_{\Gamma} \right) = D_{l_s}^{(j)} \Delta_{\Gamma}l_s^{(j)} + f_{l_s}^{(j)}({\bfa l}_s) - \hat{g}_s^{(j)}({\bfa l},{\bfa l}_s), & ({\bfa x},t)\in \Qgm, \quad j = 1,\ldots,N_{l_s},
  \end{align}
  where $D_{l_s}^{(j)}$ is the diffusion coefficient of the $j$-th surface chemical species; the function $f_{l_s}^{(j)}: \Qgm \times \mathbb{R}^{N_{l_s}} \to \mathbb{R}$ describes the reaction kinetics between the interacting surface species; and the function $\hat{g}_s^{(j)}: \Qgm \times \mathbb{R}^{N_l} \times \mathbb{R}^{N_{l_s}} \to \mathbb{R}$ describes the interaction between the exterior bulk $({\bfa l})$ and surface $({\bfa l}_s)$ species.
  
  Similarly, in the interior domain we consider $N_c \in \mathbb{N}$ freely diffusing chemical species whose concentrations are given by $c^{(p)}: \overline{\Qom} \to \mathbb{R}$, $p = 1,\ldots,N_c$. The vector of concentrations of the $N_c$ interior bulk species is given by 
  \[
    {\bfa c}: \overline{\Qom} \to \mathbb{R}^{N_c}, \qquad {\bfa c} = \left( c^{(1)}, \ldots, c^{(N_c)} \right)^{T}. 
  \]
  These concentrations evolve according to the general advection-reaction-diffusion equation
    \begin{subequations}
    \begin{align}
    \label{eq:rd-int-bulk-eq}
    & \frac{\partial c^{(p)}}{\partial t} + \nabla  \cdot \left( c^{(p)} {\bfa u}_{\Omega} \right) = D_c^{(p)} \Delta c^{(p)} + f_c^{(p)}({\bfa c}), \qquad ({\bfa x},t)\in \Qom, \qquad p = 1,\ldots,N_c, \\
    \label{eq:rd-int-bulk-bcs}
    & D_c^{(p)} \nabla c^{(p)} \cdot {\bfa n}_{\Omega} + c^{(p)} \left[ \left( {\bfa u}_{\Gamma} - {\bfa u}_{\Omega} \right) \cdot {\bfa n}_{\Omega} \right] = r^{(p)}({\bfa c}) + \hat{r}^{(p)}({\bfa c}, {\bfa c}_s, {\bfa l}_s), \qquad ({\bfa x},t)\in \Qgm,
    \end{align}
    \label{eq:rd-int-bulk}
  \end{subequations}
  where $D_{c}^{(p)}$ is the diffusion coefficient of the $p$-th interior chemical species; the material velocity of $\Omega(t)$ is denoted by ${\bfa u}_{\Omega}$; the function $f_c^{(p)}: \Qom \times \mathbb{R}^{N_c} \to \mathbb{R}$ describes the reaction kinetics between the interacting interior bulk species ${\bfa c}$; the function $r^{(p)}: \Qgm \times \mathbb{R}^{N_c} \to \mathbb{R}$ describes the boundary conditions for the interacting interior bulk species; and the function $\hat{r}^{(p)}: \Qgm \times \mathbb{R}^{N_c}\times \mathbb{R}^{N_{c_s}} \times \mathbb{R}^{N_{l_s}} \to \mathbb{R}$ describes the interaction between the interior bulk $({\bfa c})$ and surface $({\bfa c}_{s})$ species under stimulation from the exterior surface species $({\bfa l}_s)$. The outward unit normal to $\Omega(t)$ from $\Gamma(t)$ is denoted by ${\bfa n}_{\Omega}$.
  
  Finally, on the surface $\Gamma(t)$ we consider $N_{c_s} \in \mathbb{N}$ chemical species that are free to diffuse tangentially along the surface $\Gamma(t)$ and whose concentrations are given by $c_s^{(q)}: \overline{\Qgm} \to \mathbb{R}$, $q = 1,\ldots,N_{c_s}$. The vector of concentrations of the surface species is given by 
  \[
    {\bfa c}_s: \overline{\Qgm} \to \mathbb{R}^{N_{c_s}}, \qquad {\bfa c}_s = \left( c_s^{(1)}, \ldots, c_s^{(N_{c_s})} \right)^T.
  \]
  These concentrations evolve according to the general surface advection-reaction-diffusion equation
  \begin{align}
  \label{eq:rd-int-surf}
  & \frac{\partial c_s^{(q)}}{\partial t} + \nabla_{\Gamma} \cdot \left( c_s^{(q)} {\bfa u}_{\Gamma} \right) = D_{c_s}^{(q)}\Delta_{\Gamma} c_s^{(q)} + f_{c_s}^{(q)}({\bfa c}_s) - \hat{r}_s^{(q)}({\bfa c},{\bfa c}_s, {\bfa l}_s), \qquad ({\bfa x},t)\in \Qgm,
  \end{align}
  where $D_{c_s}^{(q)}$ is the diffusion coefficient of the $q$-th surface species; the function $f_{c_s}^{(q)}: \Qgm \times \mathbb{R}^{N_{c_s}} \to \mathbb{R}$ describes the reaction kinetics between the interacting surface species; and the function $\hat{r}_s^{(q)}: \Qgm \times \mathbb{R}^{N_c} \times \mathbb{R}^{N_{c_s}} \times \mathbb{R}^{N_{l_s}} \to \mathbb{R}$ describes the interaction between the interior bulk $({\bfa c})$ and surface $({\bfa c}_{s})$ species under stimulation from the exterior surface species $({\bfa l}_s)$.

  \section{Weak ALE formulation}
  \label{sec:weakale}
    When the domain $\mathcal{D}(t)$ is moving it is conventional to adopt a common frame of reference for computational purposes. A popular choice is the Arbitrary Lagrangian Eulerian (ALE) frame. Let ${\cal A}(\cdot,t): \mathcal{D}_{c} \to \mathcal{D}(t)$ be a family of bijective mappings, which at each $t \in \overline{I}$ map points in a reference or computational configuration $\mathcal{D}_{c} \subset \RR^{2}$ with coordinates ${\bfa \xi} = (\xi,\eta)$ to points in the current physical configuration $\mathcal{D}(t) \subset \RR^{2}$ with coordinates ${\bfa x} = (x,y)$ so that ${\bfa x}({\bfa \xi}, t) = {\cal A}({\bfa \xi}, t)$. Similarly, we can write $\Xi(t) = {\cal A}(\Xi_c,t)$, $\partial{\cal D}(t) = {\cal A}(\partial{\cal D}_c,t)$, $\Omega(t) = {\cal A}(\Omega_c,t)$ and $\Gamma(t) = {\cal A}(\Gamma_{c},t)$. Here we assume that the computational configuration is the initial configuration $\mathcal{D}_{c} = \mathcal{D}(0)$. 
    
    Let $\psi: \overline{{\cal D}(t)} \times \overline{I} \to \RR$ be an arbitrary function defined on the fixed Eulerian frame and $\phi: \overline{\mathcal{D}_{c}} \times \overline{I} \to \RR$ be the corresponding function defined on the ALE frame, such that $\phi({\bfa \xi}, t) = \psi({\cal A}({\bfa \xi}, t), t)$. Then the temporal derivative of $\psi$ with respect to the ALE frame is defined as
    \begin{equation}
      \left. \ptl{\psi}{t} \right|_{{\bfa \xi}} : \overline{\mathcal{D}(t)} \times \overline{I} \to \RR, \qquad \left. \ptl{\psi}{t} \right|_{{\bfa \xi}}({\bfa x}, t) = \ptl{\phi}{t}({\bfa \xi},t), \nonumber
    \end{equation}
    where ${\bfa \xi} = {\cal A}^{-1}({\bfa x},t)$. A standard application of the chain rule yields
    \begin{equation}
      \left. \ptl{\psi}{t} \right|_{{\bfa \xi}}({\bfa x}, t) = \ptl{\psi}{t} + {\bfa w} \cdot \nabla \psi, \nonumber
    \end{equation}
    where the time derivative of the ALE mapping defines the ALE velocity ${\bfa w}$ as
    \[
      {\bfa w} = \left. \ptl{{\bfa x}}{t} \right|_{{\bfa \xi}}({\cal A}^{-1}({\bfa x}, t), t).
    \]
    Note that in general, the ALE velocity will differ from the material velocities. The reformulation of \eqref{eq:rd-ext-bulk-eq} and \eqref{eq:rd-ext-surf} in terms of the ALE reference frame takes the form
    \begin{equation}
    \label{eq:rd-ext-bulk-eq-weakale}
    \left. \frac{\partial l^{(i)}}{\partial t} \right|_{{\bfa \xi}} + \nabla  \cdot \left( l^{(i)} {\bfa u}_{\Xi} \right) - {\bfa w}\cdot \nabla l^{(i)} = D_l^{(i)} \Delta l^{(i)} + f_l^{(i)}({\bfa l}), \qquad ({\bfa \xi}, t) \in \Xi_c \times I,
    \end{equation}
    for each $i = 1,\ldots,N_l$, and
    \begin{equation}
    \label{eq:rd-ext-surf-weakale}
    \left. \frac{\partial l_s^{(j)}}{\partial t} \right|_{{\bfa \xi}} + \nabla_{\Gamma} \cdot \left( l_s^{(j)} {\bfa u}_{\Gamma} \right) - {\bfa w} \cdot \nabla_{\Gamma} l_s^{(j)} = D_{l_s}^{(j)} \Delta_{\Gamma}l_s^{(j)} + f_{l_s}^{(j)}({\bfa l}_s) - \hat{g}_s^{(j)}({\bfa l},{\bfa l}_s), \qquad ({\bfa \xi}, t) \in \Gamma_c \times I,
    \end{equation}
    for each $j = 1,\ldots,N_{l_s}$. Similarly, the reformulation of \eqref{eq:rd-int-bulk-eq} and \eqref{eq:rd-int-surf} in terms of the ALE reference frame takes the form
    \begin{equation}
    \label{eq:rd-int-bulk-eq-weakale}
      \left. \frac{\partial c^{(p)}}{\partial t} \right|_{{\bfa \xi}} + \nabla  \cdot \left( c^{(p)} {\bfa u}_{\Omega} \right) - {\bfa w}\cdot\nabla c^{(p)} = D_c^{(p)} \Delta c^{(p)} + f_c^{(p)}({\bfa c}), \qquad (\bfa{\xi}, t) \in \Omega_{c} \times I,
    \end{equation}
    for each $p = 1,\ldots,N_c$, and
    \begin{equation}
    \label{eq:rd-int-surf-weakale}
      \left. \frac{\partial c_s^{(q)}}{\partial t} \right|_{{\bfa \xi}} + \nabla_{\Gamma} \cdot \left( c_s^{(q)} {\bfa u}_{\Gamma} \right) - {\bfa w}\cdot\nabla_{\Gamma} c_s^{(q)} = D_{c_s}^{(q)}\Delta_{\Gamma} c_s^{(q)} + f_{c_s}^{(q)}({\bfa c}_s) - \hat{r}_s^{(q)}({\bfa c},{\bfa c}_s, {\bfa l}_s), \qquad ({\bfa \xi}, t) \in \Gamma_{c} \times I,
    \end{equation}
    for each $q = 1,\ldots,N_{c_s}$. For full details of the ALE formulation, the reader is referred to MacDonald \etal \cite{macdonald_2016}.
    
    To construct a weak formulation of \eqref{eq:rd-ext-bulk-eq-weakale}, we assume Dirichlet boundary conditions on the outer boundary $\partial{\cal D}(t)$ and consider the space of admissible test functions defined on the reference domain made of functions $\hat{v}: \Xi_{c} \to \RR$, $\hat{v} \in H_0^1(\Xi_{c})$, where
    \[
      H_0^1(\Xi_c) = \left\{ \hat{v} \in H^1(\Xi_c) : \hat{v} = 0, {\bfa \xi} \in \partial{\cal D}_c \right\}.
    \]
    Hence, the ALE mapping defines a set ${\cal H}(\Xi(t))$ of test functions $v: \Xi(t) \to \RR$ as follows:
    \[
      {\cal H}(\Xi(t)) = \left\{ v : v = \hat{v}\circ {\cal A}^{-1}(\cdot,t), \hat{v}\in H_0^{1}(\Xi_{c})) \right\}, \qquad t \in \overline{I}.
    \]
    
    For each $i = 1,\ldots,N_l$, multiply \eqref{eq:rd-ext-bulk-eq-weakale} by a test function $v \in {\cal H}(\Xi(t))$ and integrate over $\Xi(t)$ to give the weak form: find $l^{(i)} \in {\cal H}(\Xi(t))$ such that
    \begin{align}
      \otl{}{t} \int_{\Xi(t)} l^{(i)} v \;{\rm d}{\bfa x} 
	= & \int_{\Xi(t)} \left\{ \nabla \cdot \left[ l^{(i)}({\bfa w} - {\bfa u}_{\Xi}) \right] \right\} v \;{\rm d}{\bfa x} + \int_{\Xi(t)} \left[ \nabla\cdot (D_l^{(i)}\nabla l^{(i)}) \right] v \;{\rm d}{\bfa x} \nonumber \\
	& + \int_{\Xi(t)} f_l^{(i)}({\bfa l})v \;{\rm d}{\bfa x} \nonumber \\
	= & -\int_{\Xi(t)} l^{(i)} \left[ ({\bfa w} - {\bfa u}_{\Xi}) \cdot \nabla v \right] \;{\rm d}{\bfa x} + \int_{\Gamma(t)} l^{(i)} \left[ ({\bfa w} - {\bfa u}_{\Gamma}) \cdot {\bfa n}_{\Xi} \right] v \;{\rm d}s \nonumber \\ 
	& - \int_{\Xi(t)} D_l^{(i)} \nabla l^{(i)} \cdot \nabla v \;{\rm d}{\bfa x} + \int_{\Gamma(t)} \left[ g^{(i)}({\bfa l}) + \hat{g}^{(i)}({\bfa l}, {\bfa l}_s) \right] v \;{\rm d}s \nonumber \\
	& + \int_{\Xi(t)} f_l^{(i)}({\bfa l})v \;{\rm d}{\bfa x}, \hspace{5cm} \forall v\in{\cal H}(\Xi(t)),
    \label{eq:rd-ext-bulk-weakeq-weakale}
    \end{align}
    where we have applied the flux boundary conditions \eqref{eq:rd-ext-bulk-ibcs}. Similarly, to construct the weak formulation of \eqref{eq:rd-int-bulk-eq-weakale} we consider the space of admissible test functions defined on the reference domain made of functions $\hat{v}: \Omega_c \to \RR$, $\hat{v} \in H^{1}(\Omega_c)$. Hence, the ALE mapping defines a set ${\cal H}(\Omega(t))$ of test functions $v: \Omega(t) \to \RR$ as follows:
    \[
      {\cal H}(\Omega(t)) = \left\{ v : v = \hat{v}\circ {\cal A}^{-1}(\cdot,t), \hat{v}\in H^{1}(\Omega_{c})) \right\}, \qquad t \in \overline{I}.
    \]
    
    For each $p = 1,\ldots,N_c$, multiply \eqref{eq:rd-int-bulk-eq-weakale} by a test function $v \in {\cal H}(\Omega(t))$ and integrate over $\Omega(t)$ to give the weak form: find $c^{(p)} \in {\cal H}(\Omega(t))$ such that
    \begin{align}
    	  \otl{}{t} \int_{\Omega(t)} c^{(p)} v \;{\rm d}{\bfa x} 
	  = & \int_{\Omega(t)} \left\{ \nabla \cdot \left[ c^{(p)}({\bfa w} - {\bfa u}_{\Omega}) \right] \right\} v \;{\rm d}{\bfa x} + \int_{\Omega(t)} \left[ \nabla\cdot (D_c^{(p)}\nabla c^{(p)}) \right] v \;{\rm d}{\bfa x} \nonumber \\
	  & + \int_{\Omega(t)} f_c^{(p)}({\bfa c})v \;{\rm d}{\bfa x} \nonumber \\
	  = & -\int_{\Omega(t)} c^{(p)} \left[ ({\bfa w} - {\bfa u}_{\Omega}) \cdot \nabla v \right] \;{\rm d}{\bfa x} + \int_{\Gamma(t)} c^{(p)} \left[ ({\bfa w} - {\bfa u}_{\Gamma}) \cdot {\bfa n}_{\Omega} \right] v \;{\rm d}s \nonumber \\ 
	  & - \int_{\Omega(t)} D_c^{(p)} \nabla c^{(p)} \cdot \nabla v \;{\rm d}{\bfa x} + \int_{\Gamma(t)} \left[ r^{(p)}({\bfa c}) + \hat{r}^{(p)}({\bfa c}, {\bfa c}_s, {\bfa l}_s) \right] v \;{\rm d}s \nonumber \\
	  & + \int_{\Omega(t)} f_c^{(p)}({\bfa c})v \;{\rm d}{\bfa x}, \hspace{5cm} \forall v\in{\cal H}(\Omega(t)),
	\label{eq:rd-int-bulk-weakeq-weakale}
    \end{align}
    where we have applied the flux boundary conditions \eqref{eq:rd-int-bulk-bcs}. Finally, to construct the weak formulation of \eqref{eq:rd-ext-surf-weakale} and \eqref{eq:rd-int-surf-weakale}, consider the space of admissible test functions defined on the reference curve made of functions $\hat{v}_{s}: \Gamma_{c} \to \RR$, $\hat{v}_{s}\in H^{1}(\Gamma_{c})$. Once again, the ALE mapping defines a set ${\cal H}_{s}(\Gamma(t))$ of test functions $v_{s}: \Gamma(t) \to \RR$, as follows:
    \[
      {\cal H}_{s}(\Gamma(t)) = \left\{ v_{s} : v_{s} = \hat{v}_{s}\circ {\cal A}^{-1}(\cdot,t), \hat{v}_{s}\in H^{1}(\Gamma_{c})) \right\}, \qquad t \in \overline{I}.
    \]
    
    For each $j = 1,\ldots,N_{l_s}$ and $q = 1,\ldots,N_{c_s}$, multiply \eqref{eq:rd-ext-surf-weakale} and \eqref{eq:rd-int-surf-weakale} by a test function $v_{s} \in {\cal H}_{s}(\Gamma(t))$ and integrate over $\Gamma(t)$ to give the weak form: find $l_{s}^{(j)} \in {\cal H}_{s}(\Gamma(t))$ and $c_s^{(q)} \in {\cal H}_s(\Gamma(t))$ such that
    \begin{align}
      \otl{}{t} \int_{\Gamma(t)} l_{s}^{(j)}v_{s} \;{\rm d}s = & \int_{\Gamma(t)} \left\{\nabla_{\Gamma}\cdot\left[l_{s}^{(j)}({\bfa w} - {\bfa u}_{\Gamma})\right]\right\}v_{s} \;{\rm d}s - \int_{\Gamma(t)} D_{l_s}^{(j)} \nabla_{\Gamma}l_{s}^{(j)}\cdot\nabla_{\Gamma}v_{s} \;{\rm d}s \nonumber \\
      & + \int_{\Gamma(t)} f_{l_s}^{(j)}({\bfa l}_{s}) v_{s} \;{\rm d}s - \int_{\Gamma(t)} \hat{g}_{s}^{(j)}({\bfa l}, {\bfa l_{s}})v_{s} \;{\rm d}s, \qquad \forall v_{s}\in{\cal H}_{s}(\Gamma(t)),
    \label{eq:rd-ext-surf-weakeq-weakale}
    \end{align}
    and
    \begin{align}
      \otl{}{t} \int_{\Gamma(t)} c_{s}^{(q)}v_{s} \;{\rm d}s = & \int_{\Gamma(t)} \left\{\nabla_{\Gamma}\cdot\left[c_{s}^{(q)}({\bfa w} - {\bfa u}_{\Gamma})\right]\right\}v_{s} \;{\rm d}s - \int_{\Gamma(t)} D_{c_s}^{(q)} \nabla_{\Gamma}c_{s}^{(q)}\cdot\nabla_{\Gamma}v_{s} \;{\rm d}s \nonumber \\
      & + \int_{\Gamma(t)} f_{c_s}^{(q)}({\bfa c}_{s}) v_{s} \;{\rm d}s - \int_{\Gamma(t)} \hat{r}_s^{(q)}({\bfa c}, {\bfa c}_s, {\bfa l_{s}})v_{s} \;{\rm d}s, \qquad \forall v_{s}\in{\cal H}_{s}(\Gamma(t)),
    \label{eq:rd-int-surf-weakeq-weakale}
    \end{align}
    where we have assumed tangential surface fluxes only.
    
    \subsection{Global conservation of the continuous formulation on evolving domains}
    \label{sec:conserv-cont}
      When Dirichlet boundary conditions are used on the exterior outer boundary, global conservation cannot be expected. Global conservation can be obtained if zero flux boundary conditions are employed in \eqref{eq:rd-ext-bulk-obcs}. However, in order to maintain a homogeneous distribution of exterior chemical species under movement, one would require a non-zero exterior material velocity ${\bfa u}_{\Xi}$, which may be unphysical depending on the problem under consideration. Finally, one could employ a non-zero flux boundary condition on the exterior outer boundary. However, such a boundary condition will in general be unknown {\textit {a-priori}} and conservation would only be achieved if
      \[
        \int_{\partial{\cal D}(t)} D_l^{(i)}\nabla l^{(i)}\cdot {\bfa n}_{{\cal D}} + l^{(i)}\left[ \left({\bfa u}_{\partial{\cal D}} - {\bfa u}_{\Xi}\right)\cdot {\bfa n}_{{\cal D}} \right] \;{\rm d}s = 0, \qquad i = 1,\ldots,N_l.
      \]
      Therefore, in the results presented in \S\ref{sec:nexp-cell} we will employ Dirichlet boundary conditions on the exterior outer boundary and, in the studies of global conservation throughout this article, focus our attention on the interior bulk-surface environment only; that is, global conservation of ${\bfa c}$ and ${\bfa c}_s$. We require the following assumption:
      
      \begin{assump}
      \label{ass:cont}
		We assume that on the boundary $\Gamma(t)$, the normal components of the ALE velocity and material velocity are identical; that is, ${\bfa w}\cdot{\bfa n} = {\bfa u}_{\Gamma}\cdot{\bfa n}$.
      \end{assump}
      
      The shape of an evolving curve is determined purely by its normal velocity and therefore, Assumption \ref{ass:cont} is necessary to ensure that the evolution of the curve with respect to the ALE frame matches the evolution of the curve prescribed by the material velocity ${\bfa u}_{\Gamma}$. We are now ready to prove global conservation of the reaction-diffusion system given by \eqref{eq:rd-int-bulk} and \eqref{eq:rd-int-surf}.
      
      \begin{theorem}
      \label{thm:conserv-cont}
		Under the conditions of Assumption \ref{ass:cont} and provided
		\begin{equation}
		\label{eq:conserv-cont-sources}
		\sum_{p = 1}^{N_c} \int_{\Omega(t)} f_c^{(p)}({\bfa c}) \;{\rm d}{\bfa x} = 0, \quad \sum_{p = 1}^{N_c} \int_{\Gamma(t)} r^{(p)}({\bfa c}) \;{\rm d}s = 0, \quad \sum_{q = 1}^{N_{c_s}} \int_{\Gamma(t)} f_{c_s}^{(q)}({\bfa c}_{s}) \;{\rm d}s = 0,
		\end{equation}
		and
		\begin{equation}
		\label{eq:conserv-cont-interact}
		\sum_{p = 1}^{N_c} \int_{\Gamma(t)} \hat{r}^{(p)}({\bfa c}, {\bfa c}_s, {\bfa l}_s) \;{\rm d}s = \sum_{q = 1}^{N_{c_s}} \int_{\Gamma(t)} \hat{r}_s^{(q)}({\bfa c}, {\bfa c}_s, {\bfa l_{s}}) \;{\rm d}s,
		\end{equation}
		are satisfied, then the reaction-diffusion system given by \eqref{eq:rd-int-bulk} and \eqref{eq:rd-int-surf} is globally conservative independent of the ALE velocity; that is, the following holds true:
		\[
	  	  \otl{}{t} \left\{ \sum_{p=1}^{N_c} \int_{\Omega(t)} c^{(p)} \;{\rm d}{\bfa x} + \sum_{q=1}^{N_{c_s}} \int_{\Gamma(t)} c_{s}^{(q)} \;{\rm d}s \right\} = 0.
		\]
      \end{theorem}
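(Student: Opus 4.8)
The plan is to exploit the weak formulations \eqref{eq:rd-int-bulk-weakeq-weakale} and \eqref{eq:rd-int-surf-weakeq-weakale} with the simplest possible test function, namely the constant $v \equiv 1$ (equivalently $v_s \equiv 1$). This is admissible because $\mathcal{H}(\Omega(t))$ and $\mathcal{H}_s(\Gamma(t))$ are built from $H^1(\Omega_c)$ and $H^1(\Gamma_c)$ with no imposed boundary value, so constants belong to both spaces; moreover the constant is invariant under the ALE pull-back. With this choice the left-hand sides of the two weak forms reduce to exactly the quantities appearing in the statement, so the proof amounts to showing that, after summation over $p$ and $q$, every right-hand-side contribution cancels.

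First I would substitute $v \equiv 1$ into \eqref{eq:rd-int-bulk-weakeq-weakale}. Since $\nabla v = 0$, the interior advection integral $-\int_{\Omega(t)} c^{(p)}[(\mathbf{w}-\mathbf{u}_\Omega)\cdot\nabla v]\,{\rm d}\mathbf{x}$ and the diffusion integral $-\int_{\Omega(t)} D_c^{(p)}\nabla c^{(p)}\cdot\nabla v\,{\rm d}\mathbf{x}$ vanish identically. The remaining boundary advection term $\int_{\Gamma(t)} c^{(p)}[(\mathbf{w}-\mathbf{u}_\Gamma)\cdot\mathbf{n}_\Omega]\,{\rm d}s$ is where Assumption \ref{ass:cont} enters: because $\mathbf{n}_\Omega$ is the unit normal to $\Gamma(t)$ and the normal components of $\mathbf{w}$ and $\mathbf{u}_\Gamma$ coincide there, we have $(\mathbf{w}-\mathbf{u}_\Gamma)\cdot\mathbf{n}_\Omega = 0$ and this term drops out. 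Hence
\[
\otl{}{t} \int_{\Omega(t)} c^{(p)} \,{\rm d}{\bfa x} = \int_{\Gamma(t)} \left[ r^{(p)}({\bfa c}) + \hat{r}^{(p)}({\bfa c}, {\bfa c}_s, {\bfa l}_s) \right] {\rm d}s + \int_{\Omega(t)} f_c^{(p)}({\bfa c}) \,{\rm d}{\bfa x}.
\]

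Next I would test \eqref{eq:rd-int-surf-weakeq-weakale} with $v_s \equiv 1$, so that $\nabla_\Gamma v_s = 0$ kills the surface diffusion integral. The key obstacle is the surface advection term $\int_{\Gamma(t)} \nabla_\Gamma\cdot[c_s^{(q)}(\mathbf{w}-\mathbf{u}_\Gamma)]\,{\rm d}s$, which does not vanish for a general velocity discrepancy. Here Assumption \ref{ass:cont} is decisive a second time: equality of normal components forces $\mathbf{w}-\mathbf{u}_\Gamma$ to be \emph{tangential} to $\Gamma(t)$, so $c_s^{(q)}(\mathbf{w}-\mathbf{u}_\Gamma)$ is a tangential vector field on a closed curve. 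The surface divergence theorem on a manifold without boundary then gives $\int_{\Gamma(t)} \nabla_\Gamma\cdot[c_s^{(q)}(\mathbf{w}-\mathbf{u}_\Gamma)]\,{\rm d}s = 0$, the curvature contribution vanishing precisely because the normal component is zero. This leaves $\otl{}{t}\int_{\Gamma(t)} c_s^{(q)}\,{\rm d}s = \int_{\Gamma(t)} f_{c_s}^{(q)}({\bfa c}_s)\,{\rm d}s - \int_{\Gamma(t)} \hat{r}_s^{(q)}({\bfa c},{\bfa c}_s,{\bfa l}_s)\,{\rm d}s$. Finally I would sum the bulk identity over $p = 1,\ldots,N_c$ and the surface identity over $q = 1,\ldots,N_{c_s}$ and add them. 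The two source conditions in \eqref{eq:conserv-cont-sources} annihilate the $f_c^{(p)}$, $r^{(p)}$, and $f_{c_s}^{(q)}$ contributions, while the interaction balance \eqref{eq:conserv-cont-interact} makes the $\hat{r}^{(p)}$ and $\hat{r}_s^{(q)}$ terms cancel against one another. Everything on the right-hand side therefore vanishes, yielding the claimed conservation law, and the argument is manifestly independent of $\mathbf{w}$ since the ALE velocity survives only in the two advection terms that were shown to be zero.
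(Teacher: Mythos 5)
Your proposal is correct and follows essentially the same route as the paper: test the weak forms \eqref{eq:rd-int-bulk-weakeq-weakale} and \eqref{eq:rd-int-surf-weakeq-weakale} with $v \equiv 1$ and $v_s \equiv 1$, use Assumption \ref{ass:cont} to kill the boundary advection term and the integrated surface divergence term, and then invoke \eqref{eq:conserv-cont-sources} and \eqref{eq:conserv-cont-interact} to cancel the remaining contributions. If anything, you are slightly more explicit than the paper on one point: where the paper simply asserts that $\int_{\Gamma(t)} \nabla_{\Gamma}\cdot\left[c_{s}^{(q)}({\bfa w} - {\bfa u}_{\Gamma})\right] {\rm d}s = 0$ because the closed curve has no boundary, you correctly note that Assumption \ref{ass:cont} is needed here too, since it makes ${\bfa w} - {\bfa u}_{\Gamma}$ tangential and thereby eliminates the curvature (normal-flux) contribution in the surface divergence theorem.
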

      \begin{proof}
		In \eqref{eq:rd-int-bulk-weakeq-weakale} we set $v = 1$, apply the divergence theorem and sum over all $p = 1,\ldots,N_c$ to give
		\begin{align}
    	  	  \sum_{p = 1}^{N_c} \left\{ \otl{}{t} \int_{\Omega(t)} c^{(p)} \;{\rm d}{\bfa x} \right\} 
	  	  = & \sum_{p = 1}^{N_c} \left\{ \int_{\Omega(t)} \nabla \cdot \left[ c^{(p)}({\bfa w} - {\bfa u}_{\Omega}) \right] \;{\rm d}{\bfa x} + \int_{\Omega(t)} \nabla\cdot (D_c^{(p)}\nabla c^{(p)}) \;{\rm d}{\bfa x} \right. \;+ \nonumber \\
	  	  & \qquad \; \left. \int_{\Omega(t)} f_c^{(p)}({\bfa c}) \;{\rm d}{\bfa x} \right\} \nonumber \\
	  	  = & \sum_{p = 1}^{N_c} \int_{\Gamma(t)} c^{(p)} \left[ ({\bfa w} - {\bfa u}_{\Gamma}) \cdot {\bfa n}_{\Omega} \right] \;{\rm d}s + \sum_{p = 1}^{N_c} \int_{\Gamma(t)} r^{(p)}({\bfa c}) \;{\rm d}s \;+ \nonumber \\
	  	  & \sum_{p = 1}^{N_c} \int_{\Gamma(t)} \hat{r}^{(p)}({\bfa c}, {\bfa c}_s, {\bfa l}_s) \;{\rm d}s + \sum_{p = 1}^{N_c} \int_{\Omega(t)} f_c^{(p)}({\bfa c}) \;{\rm d}{\bfa x}.
		\label{eq:conserv-cont-bulk-eq}
    		\end{align}
		Similarly, in \eqref{eq:rd-int-surf-weakeq-weakale} we set $v_{s} = 1$ and sum over all $q = 1,\ldots, N_{c_s}$ to give
		\begin{align}
      	  \sum_{q = 1}^{N_{c_s}} \left\{ \otl{}{t} \int_{\Gamma(t)} c_{s}^{(q)} \;{\rm d}s \right\} = & \sum_{q = 1}^{N_{c_s}} \int_{\Gamma(t)} \nabla_{\Gamma}\cdot\left[c_{s}^{(q)}({\bfa w} - {\bfa u}_{\Gamma})\right] \;{\rm d}s + \sum_{q = 1}^{N_{c_s}} \int_{\Gamma(t)} f_{c_s}^{(q)}({\bfa c}_{s}) \;{\rm d}s \;- \nonumber \\
      	  & \sum_{q = 1}^{N_{c_s}} \int_{\Gamma(t)} \hat{r}_s^{(q)}({\bfa c}, {\bfa c}_s, {\bfa l_{s}}) \;{\rm d}s.
    		\label{eq:conserv-cont-surf-eq}
    		\end{align}
		Adding \eqref{eq:conserv-cont-bulk-eq} and \eqref{eq:conserv-cont-surf-eq} gives
		\begin{align}
	  	  \otl{}{t} \left\{ \sum_{p = 1}^{N_c} \int_{\Omega(t)} c^{(p)} \;{\rm d}{\bfa x} \right. + \left. \sum_{q = 1}^{N_{c_s}} \int_{\Gamma(t)} c_{s}^{(q)} \;{\rm d}s \right\} = & \nonumber \\
	    & \hspace{-5cm} \sum_{p = 1}^{N_c} \int_{\Gamma(t)} c^{(p)} \left[ ({\bfa w} - {\bfa u}_{\Gamma}) \cdot {\bfa n}_{\Omega} \right] \;{\rm d}s + \sum_{p = 1}^{N_c} \int_{\Gamma(t)} r^{(p)}({\bfa c}) \;{\rm d}s + \sum_{p = 1}^{N_c} \int_{\Gamma(t)} \hat{r}^{(p)}({\bfa c}, {\bfa c}_s, {\bfa l}_s) \;{\rm d}s \;+ \nonumber \\
	    & \hspace{-5cm} \sum_{p = 1}^{N_c} \int_{\Omega(t)} f_c^{(p)}({\bfa c}) \;{\rm d}{\bfa x} + \sum_{q = 1}^{N_{c_s}} \int_{\Gamma(t)} \nabla_{\Gamma}\cdot\left[c_{s}^{(q)}({\bfa w} - {\bfa u}_{\Gamma})\right] \;{\rm d}s + \sum_{q = 1}^{N_{c_s}} \int_{\Gamma(t)} f_{c_s}^{(q)}({\bfa c}_{s}) \;{\rm d}s \;- \nonumber \\
	    & \hspace{-5cm} \sum_{q = 1}^{N_{c_s}} \int_{\Gamma(t)} \hat{r}_s^{(q)}({\bfa c}, {\bfa c}_s, {\bfa l_{s}}) \;{\rm d}s.
	    \label{eq:conserv-cont-eq}
		\end{align}
		Due to Assumption \ref{ass:cont}, the normal component of the ALE and material velocites are the same. Hence, the first term on the right-hand side of the above is zero. Also, due to the curve $\Gamma(t)$ being closed there is no boundary of $\Gamma(t)$ and therefore, there cannot be any tangential flux; in other words,
		\[
	  	  \int_{\Gamma(t)} \nabla_{\Gamma}\cdot\left[c_{s}^{(q)}({\bfa w} - {\bfa u}_{\Gamma})\right] \;{\rm d}s = 0, \qquad q = 1,\ldots,N_{c_s}.
		\]
		Therefore, \eqref{eq:conserv-cont-eq} reduces to
		\begin{align*}
	  	  \otl{}{t} \left\{ \sum_{p = 1}^{N_c} \int_{\Omega(t)} c^{(p)} \;{\rm d}{\bfa x} \right. + \left. \sum_{q = 1}^{N_{c_s}} \int_{\Gamma(t)} c_{s}^{(q)} \;{\rm d}s \right\} = & \nonumber \\
	    & \hspace{-5cm} \sum_{p = 1}^{N_c} \int_{\Omega(t)} f_c^{(p)}({\bfa c}) \;{\rm d}{\bfa x} + \sum_{p = 1}^{N_c} \int_{\Gamma(t)} r^{(p)}({\bfa c}) \;{\rm d}s + \sum_{q = 1}^{N_{c_s}} \int_{\Gamma(t)} f_{c_s}^{(q)}({\bfa c}_{s}) \;{\rm d}s \;+ \nonumber \\
	    & \hspace{-5cm}  \sum_{p = 1}^{N_c} \int_{\Gamma(t)} \hat{r}^{(p)}({\bfa c}, {\bfa c}_s, {\bfa l}_s) \;{\rm d}s - \sum_{q = 1}^{N_{c_s}} \int_{\Gamma(t)} \hat{r}_s^{(q)}({\bfa c}, {\bfa c}_s, {\bfa l_{s}}) \;{\rm d}s.
		\end{align*}
		The result is then immediate following the application of \eqref{eq:conserv-cont-sources} and \eqref{eq:conserv-cont-interact}.
	
      \end{proof}

  \section{Finite element spatial discretisation}
  \label{sec:fem}
    We will assume that the reference ${\cal D}_{c}$ and physical ${\cal D}(t)$ domains are approximated by polygonal domains ${\cal D}_{c,h}$ and ${\cal D}_{h}(t)$, $\forall t\in \overline{I}$, respectively. Let ${\cal D}_{c,h}$ be covered by a fixed triangulation ${\cal T}_{h,c}^{{\cal D}}$ with straight edges, so that
    \[
      {\cal D}_{c,h} = \bigcup_{K \in {\cal T}_{h,c}^{{\cal D}}} K.
    \]
    The curve $\Gamma(t)$ separates ${\cal D}(t)$ into exterior $\Xi(t)$ and interior $\Omega(t)$ regions. Therefore, we construct the triangulation ${\cal T}_{h,c}^{{\cal D}}$ so that it is fitted to the curve $\Gamma(t)$. To do this, we let $\Xi_{c,h}$ and $\Omega_{c,h}$ be covered by fixed triangulations ${\cal T}_{h,c}^{\Xi}$ and ${\cal T}_{h,c}^{\Omega}$, respectively, with straight edges so that
    \[
      \Xi_{c,h} = \bigcup_{K \in {\cal T}_{h,c}^{\Xi}} K, \qquad \mbox{and} \qquad \Omega_{c,h} = \bigcup_{K \in {\cal T}_{h,c}^{\Omega}} K.
    \]
    Clearly, ${\cal T}_{h,c}^{\Xi} \subseteq {\cal T}_{h,c}^{{\cal D}}$ and ${\cal T}_{h,c}^{\Omega} \subseteq {\cal T}_{h,c}^{{\cal D}}$. Let ${\cal E}$ denote the set of all edges of the triangulation ${\cal T}_{h,c}^{\Omega}$ and define
    \[
      {\cal E}^{\Gamma} = \left\{ e : e \in {\cal E} \cap \partial\Omega_{c,h} \right\}.
    \]
    Thus, the curve $\Gamma_c$ is approximated using the boundary of $\Omega_{c,h}$ so that $\Gamma_{c,h} := \partial\Omega_{c,h}$. Note that due to ${\cal T}_{h,c}^{{\cal D}}$ being fitted to $\Gamma_c$, we could also define $\Gamma_{c,h}$ using the inner boundary of $\Xi_{c,h}$. Similarly, the approximation to the outer boundary $\partial{\cal D}_c$ is obtained using the outer boundary of $\Xi_{c,h}$. The number of elements of ${\cal T}_{h,c}^{{\cal D}}$, ${\cal T}_{h,c}^{\Xi}$ and ${\cal T}_{h,c}^{\Omega}$ will be denoted by $N_e^{{\cal D}}$, $N_e^{\Xi}$ and $N_e^{\Omega}$, respectively; the total number of vertices of ${\cal T}_{h,c}^{{\cal D}}$, ${\cal T}_{h,c}^{\Xi}$ and ${\cal T}_{h,c}^{\Omega}$ will be denoted by ${\cal N}^{{\cal D}}$, ${\cal N}^{\Xi}$ and ${\cal N}^{\Omega}$, respectively; and the number of vertices on the boundaries will be denoted by ${\cal N}_s^{\Gamma}$ and ${\cal N}_s^{\partial{\cal D}}$.
    
    The finite element spaces on ${\cal T}_{h,c}^{{\cal D}}$ can then be defined as
    \begin{equation}
      {\cal L}^{1}({\cal D}_{c,h}) = \left\{ \hat{v}_{h} \in H^{1}({\cal D}_{c,h}) : \left. \hat{v}_{h} \right|_{K} \in {\bb P}_{1}(K), \forall K \in {\cal T}_{h,c}^{{\cal D}} \right\}, \nonumber
    \end{equation}
    where $\hat{v}_{h}: {\cal D}_{c,h} \to \RR$ and ${\bb P}_{1}(K)$ is the space of linear polynomials on $K$. Similarly, the finite element spaces on ${\cal T}_{h,c}^{\Xi}$ and ${\cal T}_{h,c}^{\Omega}$ can be defined as
    \begin{align}
      {\cal L}^{1}(\Xi_{c,h}) = & \left\{ \hat{v}_{h} \in H^{1}(\Xi_{c,h}) : \left. \hat{v}_{h} \right|_{K} \in {\bb P}_{1}(K), \forall K \in {\cal T}_{h,c}^{\Xi} \right\}, \nonumber \\
      {\cal L}^{1}_{0}(\Xi_{c,h}) = & \left\{ \hat{v}_{h} \in {\cal L}^{1}(\Xi_{c,h}) : \hat{v}_{h} = 0, {\bfa \xi} \in \partial{\cal D}_{c,h} \right\}, \nonumber
    \end{align}
    and
    \begin{align}
      {\cal L}^{1}(\Omega_{c,h}) = & \left\{ \hat{v}_{h} \in H^{1}(\Omega_{c,h}) : \left. \hat{v}_{h} \right|_{K} \in {\bb P}_{1}(K), \forall K \in {\cal T}_{h,c}^{\Omega} \right\}, \nonumber \\
      {\cal L}^{1}_{0}(\Omega_{c,h}) = & \left\{ \hat{v}_{h} \in {\cal L}^{1}(\Omega_{c,h}) : \hat{v}_{h} = 0, {\bfa \xi} \in \partial\Omega_{c,h} \right\}, \nonumber
    \end{align}
    respectively. Furthermore, we define
    \begin{equation}
      {\cal L}^{1}(\Gamma_{c,h}) = \left\{ \hat{v}_{h} \in H^{1}(\Gamma_{c,h}) : \left. \hat{v}_{h} \right|_{e} \in {\bb P}_{1}(e), \forall e \in {\cal E}^{\Gamma} \right\}. \nonumber
    \end{equation}
    The ALE mapping is interpolated using piecewise linear finite elements giving rise to a spatially discrete mapping ${\cal A}_{h}(\cdot,t) \in \left[ {\cal L}^{1}({\cal D}_{c,h}) \right]^{2}$, for each $t \in \overline{I}$, of the form
    \[
      {\bfa x}_{h}({\bfa \xi},t) = {\cal A}_{h}({\bfa \xi},t) = \sum_{\mu = 1}^{{\cal N}^{{\cal D}}}{\bfa x}_{\mu}(t)\hat{\phi}_{\mu}({\bfa \xi}),
    \]
    where ${\bfa x}_{\mu}(t) = {\cal A}_{h}({\bfa \xi}_{\mu},t)$ denotes the position of the $\mu$-th node at time $t$, and $\{ \hat{\phi}_{\mu}({\bfa \xi}) \}_{\mu = 1}^{{\cal N}^{{\cal D}}}$ are the nodal basis functions in ${\cal L}^{1}({\cal D}_{c,h})$. The discretised ALE velocity therefore takes the form
    \[
      {\bfa w}_{h}({\bfa \xi}, t) = \sum_{\mu = 1}^{{\cal N}^{{\cal D}}}\dot{{\bfa x}}_{\mu}(t)\hat{\phi}_{\mu}({\bfa \xi}).
    \]
    The physical triangulation is defined as the image of the reference triangulation under the discrete ALE mapping: for example, ${\cal T}_h^{{\cal D}}(t) = {\cal A}_h({\cal T}_{h,c}^{{\cal D}},t)$. Since the mapping is linear, each $K(t) \in {\cal T}_h^{{\cal D}}(t)$ which is the image of a triangle $K \in {\cal T}_{h,c}^{{\cal D}}$, is also a triangle with straight edges. Similarly for triangulations ${\cal T}_h^{\Xi}(t)$ and ${\cal T}_h^{\Omega}(t)$.
    
    The finite element test space on $\Xi_h(t)$ is defined, using the ALE mapping, as a set ${\cal H}_h(\Xi_h(t))$ of test functions $v_h: \Xi_{h}(t) \to \RR$ such that
    \[
      {\cal H}_h(\Xi_h(t)) = \left\{ v_h : v_h = \hat{v}_h\circ {\cal A}^{-1}_{h}(\cdot,t), \hat{v}_h\in {\cal L}_0^1(\Xi_{c,h}) \right\}, \quad t \in \overline{I}.
    \]
    The finite element spatial discretisation of the ALE formulation \eqref{eq:rd-ext-bulk-weakeq-weakale} therefore takes the form: find $l_h^{(i)}\in {\cal H}_h(\Xi_h(t))$, for each $i = 1,\ldots,N_l$, such that
    \begin{align}
      \otl{}{t} \int_{\Xi_h(t)} l_h^{(i)} v_h \;{\rm d}{\bfa x} 
	= & -\int_{\Xi_h(t)} l_h^{(i)} \left[ ({\bfa w}_h - {\bfa u}_{\Xi}) \cdot \nabla v_h \right] \;{\rm d}{\bfa x} + \int_{\Gamma_h(t)} l_h^{(i)} \left[ ({\bfa w}_h - {\bfa u}_{\Gamma}) \cdot {\bfa n}_{\Xi} \right] v_h \;{\rm d}s \nonumber \\ 
	& - \int_{\Xi_h(t)} D_l^{(i)} \nabla l_h^{(i)} \cdot \nabla v_h \;{\rm d}{\bfa x} + \int_{\Gamma_h(t)} \left[ g^{(i)}({\bfa l}_h) + \hat{g}^{(i)}({\bfa l}_h, {\bfa l}_{s,h}) \right] v_h \;{\rm d}s \nonumber \\
	& + \int_{\Xi_h(t)} f_l^{(i)}({\bfa l}_h)v_h \;{\rm d}{\bfa x}, \hspace{4cm} \forall v_h \in {\cal H}_h(\Xi_h(t)).
    \label{eq:fem-ext-bulk-eq}
    \end{align}
    Similarly, the ALE mapping defines the finite element test space on $\Omega_h(t)$ as the set ${\cal H}_h(\Omega_h(t))$ of test functions $v_h: \Omega_h(t) \to \RR$ such that
    \[
      {\cal H}_h(\Omega_h(t)) = \left\{ v_h : v_h = \hat{v}_h\circ {\cal A}^{-1}_{h}(\cdot,t), \hat{v}_h\in {\cal L}^1(\Omega_{c,h}) \right\}, \quad t \in \overline{I}.
    \]
    The finite element spatial discretisation of the ALE formulation \eqref{eq:rd-int-bulk-weakeq-weakale} therefore takes the form: find $c_h^{(p)} \in {\cal H}_h(\Omega_h(t))$, for each $p = 1,\ldots,N_c$, such that
    \begin{align}
    	  \otl{}{t} \int_{\Omega_h(t)} c_h^{(p)} v_h \;{\rm d}{\bfa x} 
	  = & -\int_{\Omega_h(t)} c_h^{(p)} \left[ ({\bfa w}_h - {\bfa u}_{\Omega}) \cdot \nabla v_h \right] \;{\rm d}{\bfa x} + \int_{\Gamma_h(t)} c_h^{(p)} \left[ ({\bfa w}_h - {\bfa u}_{\Gamma}) \cdot {\bfa n}_{\Omega} \right] v_h \;{\rm d}s \nonumber \\ 
	  & - \int_{\Omega_h(t)} D_c^{(p)} \nabla c_h^{(p)} \cdot \nabla v_h \;{\rm d}{\bfa x} + \int_{\Gamma_h(t)} \left[ r^{(p)}({\bfa c}_h) + \hat{r}^{(p)}({\bfa c}_h, {\bfa c}_{s,h}, {\bfa l}_{s,h}) \right] v_h \;{\rm d}s \nonumber \\
	  & + \int_{\Omega_h(t)} f_c^{(p)}({\bfa c}_h) v_h \;{\rm d}{\bfa x}, \hspace{4cm} \forall v_h\in{\cal H}_h(\Omega_h(t)).
	\label{eq:fem-int-bulk-eq}
    \end{align}
    Finally, the ALE mapping defines the finite element test space on $\Gamma_h(t)$ as the set ${\cal H}_{s,h}(\Gamma_h(t))$ of test functions $v_{s,h}: \Gamma_h(t) \to \RR$ such that
    \[
      {\cal H}_{s,h}(\Gamma_h(t)) = \left\{ v_{s,h} : v_{s,h} = \hat{v}_{s,h}\circ {\cal A}^{-1}_{h}(\cdot,t), \hat{v}_{s,h}\in {\cal L}^1(\Gamma_{c,h}) \right\}, \quad t \in \overline{I}.
    \]
    The finite element spatial discretisation of the ALE formulations \eqref{eq:rd-ext-surf-weakeq-weakale} and \eqref{eq:rd-int-surf-weakeq-weakale} take the form: find $l_{s,h}^{(j)} \in {\cal H}_{s,h}(\Gamma_h(t))$, for each $j = 1,\ldots,N_{l_s}$, and $c_{s,h}^{(q)} \in {\cal H}_{s,h}(\Gamma_h(t))$, for each $q = 1,\ldots,N_{c_s}$, such that
    \begin{align}
      \otl{}{t} \int_{\Gamma_h(t)} l_{s,h}^{(j)}v_{s,h} \;{\rm d}s = & \int_{\Gamma_h(t)} \left\{\nabla_{\Gamma}\cdot\left[l_{s,h}^{(j)}({\bfa w}_h - {\bfa u}_{\Gamma})\right]\right\}v_{s,h} \;{\rm d}s - \int_{\Gamma_h(t)} D_{l_s}^{(j)} \nabla_{\Gamma}l_{s,h}^{(j)}\cdot\nabla_{\Gamma}v_{s,h} \;{\rm d}s \nonumber \\
      & + \int_{\Gamma_h(t)} f_{l_s}^{(j)}({\bfa l}_{s,h}) v_{s,h} \;{\rm d}s - \int_{\Gamma_h(t)} \hat{g}_{s}^{(j)}({\bfa l}_h, {\bfa l_{s,h}})v_{s,h} \;{\rm d}s, \nonumber \\
      & \hspace{7cm} \forall v_{s,h}\in{\cal H}_{s,h}(\Gamma_h(t)),
    \label{eq:fem-ext-surf-eq}
    \end{align}
    and
    \begin{align}
      \otl{}{t} \int_{\Gamma_h(t)} c_{s,h}^{(q)}v_{s,h} \;{\rm d}s = & \int_{\Gamma_h(t)} \left\{\nabla_{\Gamma}\cdot\left[c_{s,h}^{(q)}({\bfa w}_h - {\bfa u}_{\Gamma})\right]\right\}v_{s,h} \;{\rm d}s - \int_{\Gamma_h(t)} D_{c_s}^{(q)} \nabla_{\Gamma}c_{s,h}^{(q)}\cdot\nabla_{\Gamma}v_{s,h} \;{\rm d}s \nonumber \\
      & + \int_{\Gamma_h(t)} f_{c_s}^{(q)}({\bfa c}_{s,h}) v_{s,h} \;{\rm d}s - \int_{\Gamma_h(t)} \hat{r}_s^{(q)}({\bfa c}_h, {\bfa c}_{s,h}, {\bfa l}_{s,h})v_{s,h} \;{\rm d}s, \nonumber \\
      & \hspace{7cm} \forall v_{s,h}\in{\cal H}_{s,h}(\Gamma_h(t)).
    \label{eq:fem-int-surf-eq}
    \end{align}
    
    For each $i = 1,\ldots,N_l$, $j = 1,\ldots,N_{l_s}$, $p = 1,\ldots,N_c$ and $q = 1,\ldots,N_{c_s}$, the finite element approximation of the bulk and surface species can be expressed as
    \begin{alignat}{2}
    	  l_h^{(i)}({\bfa x},t) = & \sum_{\mu = 1}^{{\cal N}^{\Xi}} l_\mu^{(i)}(t) \phi_\mu({\bfa x},t), \quad &\mbox{and} \quad l_{s,h}^{(j)}({\bfa x},t) = & \sum_{\mu = 1}^{{\cal N}_{s}^{\Gamma}} l_{s,\mu}^{(j)}(t) \phi_{s,\mu}({\bfa x}, t), \nonumber \\
      c_{h}^{(p)}({\bfa x},t) = & \sum_{\mu = 1}^{{\cal N}^{\Omega}} c_{\mu}^{(p)}(t) \phi_{\mu}({\bfa x},t), \quad &\mbox{and} \quad c_{s,h}^{(q)}({\bfa x},t) = & \sum_{\mu = 1}^{{\cal N}_{s}^{\Gamma}} c_{s,\mu}^{(q)}(t)\phi_{s,\mu}({\bfa x},t),
    \label{eq:fem-femapprox}
    \end{alignat}
    where $\left\{ \phi_{\mu}({\bfa x},t) \right\}_{\mu=1}^{{\cal N}}$ are the time-dependent nodal basis functions for the bulk exterior $({\cal N} = {\cal N}^{\Xi})$ and interior $({\cal N} = {\cal N}^{\Omega})$ environments, respectively; and $\left\{ \phi_{s,\mu}({\bfa x},t) \right\}_{\mu=1}^{{\cal N}_{s}^{\Gamma}}$ are the time-dependent surface nodal basis functions. Let 
    \[
      {\cal L}(t) = \{ {\bfa L}^{(i)}(t) \}_{i = 1}^{N_l}, \quad {\cal L}_{s}(t) = \{ {\bfa L}_{s}^{(j)}(t) \}_{j = 1}^{N_{l_s}}, \quad {\cal C}(t) = \{ {\bfa C}^{(p)}(t) \}_{p = 1}^{N_c} \quad \mbox{and} \quad {\cal C}_s(t) = \{ {\bfa C}_s^{(q)}(t) \}_{q = 1}^{N_{c_s}},
    \]
    such that, for each $i = 1,\ldots,N_l$, $j = 1,\ldots,N_{l_s}$, $p = 1,\ldots,N_c$ and $q = 1,\ldots,N_{c_s}$, 
    \begin{alignat}{2}
      {\bfa L}^{(i)}(t) = & \left\{ l_{\mu}^{(i)}(t) \right\}_{\mu=1}^{{\cal N}^{\Xi}}, \quad & {\bfa L}_{s}^{(j)}(t) = \left\{ l_{s,\mu}^{(j)}(t) \right\}_{\mu=1}^{{\cal N}_{s}^{\Gamma}}, \nonumber \\ 
      {\bfa C}^{(p)}(t) = & \left\{ c_{\mu}^{(p)}(t) \right\}_{\mu = 1}^{{\cal N}^{\Omega}}, \quad & {\bfa C}_s^{(q)}(t) = \left\{ c_{s,\mu}^{(q)}(t) \right\}_{\mu = 1}^{{\cal N}_{s}^{\Gamma}}. \nonumber
    \end{alignat}
    Then we can express \eqref{eq:fem-ext-bulk-eq} and \eqref{eq:fem-int-bulk-eq} as systems of ordinary differential equations
    \begin{align}
      \otl{}{t} \left[ M_{\Xi}(t){\bfa L}^{(i)}(t) \right] + & \left[ B_{\Xi}(t, {\bfa w}_{h}; {\bfa u}_{\Xi}) - A_{\Gamma}(t, {\bfa w}_{h}; {\bfa u}_{\Gamma}, {\bfa n}_{\Xi}) + K_{\Xi}(t; D_l^{(i)}) \right]{\bfa L}^{(i)}(t) \;- \nonumber \\
      & {\bfa G}^{(i)}(t, {\cal L}(t), {\cal L}_{s}(t)) = {\bfa F}_{l}^{(i)}(t, {\cal L}(t)),
    \label{eq:fem-ext-bulk-eq-sd}
    \end{align}
    and
    \begin{align}
      \otl{}{t} \left[ M_{\Omega}(t){\bfa C}^{(p)}(t) \right] + & \left[ B_{\Omega}(t, {\bfa w}_{h}; {\bfa u}_{\Omega}) - A_{\Gamma}(t, {\bfa w}_{h}; {\bfa u}_{\Gamma}, {\bfa n}_{\Omega}) + K_{\Omega}(t; D_c^{(p)}) \right]{\bfa C}^{(p)}(t) \;- \nonumber \\
      & {\bfa R}^{(p)}(t, {\cal C}(t), {\cal C}_{s}(t), {\cal L}_s(t)) = {\bfa F}_{c}^{(p)}(t, {\cal C}(t)),
    \label{eq:fem-int-bulk-eq-sd}
    \end{align}
    whose time-dependent matrices are given by
    \begin{subequations}
    \begin{align}
    \label{eq:fem-matrices-M}
      [ M_{\Xi}(t) ]_{\nu,\mu} = & \int_{\Xi_{h}(t)} \phi_{\mu}({\bfa x},t)\phi_{\nu}({\bfa x},t) \;{\rm d}{\bfa x}, \\
    \label{eq:fem-matrices-B}
      [ B_{\Xi}(t, {\bfa w}_{h}; {\bfa u}_{\Xi}) ]_{\nu,\mu} = & \int_{\Xi_{h}(t)} \phi_{\mu}({\bfa x},t)\left[ ({\bfa w}_{h} - {\bfa u}_{\Xi})\cdot\nabla \phi_{\nu}({\bfa x},t) \right] \;{\rm d}{\bfa x}, \\
    \label{eq:fem-matrices-A}
      [ A_{\Gamma}(t, {\bfa w}_{h}; {\bfa u}_{\Gamma}, {\bfa n}_{\Xi}) ]_{\nu,\mu} = & \int_{\Gamma_{h}(t)} \phi_{\mu}({\bfa x},t) \left[ ({\bfa w}_{h} - {\bfa u}_{\Gamma}) \cdot {\bfa n}_{\Xi} \right] \phi_{\nu}({\bfa x},t) \;{\rm d}s, \\
    \label{eq:fem-matrices-K}
      [ K_{\Xi}(t; D_l^{(i)}) ]_{\nu,\mu} = & \int_{\Xi_{h}(t)} D_l^{(i)} \nabla\phi_{\mu}({\bfa x},t) \cdot \nabla\phi_{\nu}({\bfa x},t) \;{\rm d}{\bfa x}.
    \end{align}
    \label{eq:fem-matrices}
    \end{subequations}
    The time-dependent matrices for the interior bulk environment $\Omega_h(t)$ which are given in \eqref{eq:fem-int-bulk-eq-sd} are defined similarly to the above with the appropriate substitutions for the interior region. The time-dependent boundary and load vectors are given by
    \begin{subequations}
    \begin{align}
      [ {\bfa G}^{(i)}(t, {\cal L}(t), {\cal L}_{s}(t)) ]_{\nu} = & \int_{\Gamma_{h}(t)} \left[ g^{(i)}({\bfa l}_{h}) + \hat{g}^{(i)}({\bfa l}_{h}, {\bfa l}_{s,h}) \right]\phi_{\nu}({\bfa x},t) \;{\rm d}s, \nonumber \\
      [{\bfa F}_{l}^{(i)}(t, {\cal L}(t))]_{\nu} = & \int_{\Xi_{h}(t)} f_l^{(i)}({\bfa l}_{h})\phi_{\nu}({\bfa x},t) \;{\rm d}{\bfa x}, \nonumber \\
      [{\bfa R}^{(p)}(t, {\cal C}(t), {\cal C}_{s}(t), {\cal L}_s(t))]_{\nu} = & \int_{\Gamma_h(t)} \left[ r^{(p)}({\bfa c}_h) + \hat{r}^{(p)}({\bfa c}_h, {\bfa c}_{s,h}, {\bfa l}_{s,h}) \right]\phi_{\nu}({\bfa x},t) \;{\rm d}s, \nonumber \\
      [{\bfa F}_{c}^{(p)}(t, {\cal C}(t))]_{\nu} = & \int_{\Omega_{h}(t)} f_c^{(p)}({\bfa c}_{h})\phi_{\nu}({\bfa x},t) \;{\rm d}{\bfa x}. \nonumber
    \end{align}
    \end{subequations}
    Similarly, we can express \eqref{eq:fem-ext-surf-eq} and \eqref{eq:fem-int-surf-eq} as systems of ordinary differential equations
    \begin{align}
      \otl{}{t} [ M_{s}(t){\bfa L}_{s}^{(j)}(t)] + [ K_{s}(t; D_{l_s}^{(j)}) - B_{s}(t,{\bfa w}_{h}; {\bfa u}_{\Gamma}) ]{\bfa L}_{s}^{(j)}(t) + {\bfa G}_{s}^{(j)}(t, {\cal L}(t), {\cal L}_{s}(t)) = {\bfa F}_{l_s}^{(j)}(t, {\cal L}_{s}(t)),
    \label{eq:fem-ext-surf-eq-sd}
    \end{align}
    and
    \begin{align}
    \otl{}{t} \left[ M_{s}(t){\bfa C}_{s}^{(q)}(t) \right] + & \left[ K_{s}(t; D_{c_s}^{(q)}) - B_{s}(t,{\bfa w}_{h}; {\bfa u}_{\Gamma}) \right]{\bfa C}_{s}^{(q)}(t) \;+ \nonumber \\
    & \hspace{3cm} {\bfa R}_{s}^{(q)}(t, {\cal C}(t), {\cal C}_s(t), {\cal L}_{s}(t)) = {\bfa F}_{c_s}^{(q)}(t, {\cal C}_{s}(t)),
    \label{eq:fem-int-surf-eq-sd}
    \end{align}
    whose time-dependent matrices are given by
    \begin{subequations}
    \begin{align}
    \label{eq:fem-matsurf-M}
      [ M_{s}(t) ]_{\nu,\mu} = & \int_{\Gamma_{h}(t)} \phi_{s,\mu}({\bfa x},t)\phi_{s,\nu}({\bfa x},t) \;{\rm d}s, \\
    \label{eq:fem-matsurf-K}
      [ K_{s}(t; D) ]_{\nu,\mu} = & \int_{\Gamma_{h}(t)} D\nabla_{\Gamma}\phi_{s,\mu}({\bfa x},t) \cdot \nabla_{\Gamma}\phi_{s,\nu}({\bfa x},t) \;{\rm d}s, \\
    \label{eq:fem-matsurf-B}
      [ B_{s}(t,{\bfa w}_{h}; {\bfa u}_{\Gamma}) ]_{\nu,\mu} = & \int_{\Gamma_{h}(t)} \left\{ \nabla_{\Gamma} \cdot [\phi_{s,\mu}({\bfa x},t)({\bfa w}_{h} - {\bfa u}_{\Gamma})] \right\}\phi_{s,\nu}({\bfa x},t) \;{\rm d}s,
    \end{align}
    \label{eq:fem-matsurf}
    \end{subequations}
    and time-dependent interaction and load vectors are given by
    \begin{subequations}
    \begin{align}
      [ {\bfa G}_{s}^{(j)}(t, {\cal L}(t), {\cal L}_{s}(t)) ]_{\nu} = & \int_{\Gamma_{h}(t)} \hat{g}_{s}^{(j)}({\bfa l}_{h}, {\bfa l}_{s,h})\phi_{s,\nu}({\bfa x},t) \;{\rm d}s, \nonumber \\
      [{\bfa F}_{l_s}^{(j)}(t, {\cal L}_{s}(t))]_{\nu} = & \int_{\Gamma_{h}(t)} f_{l_s}^{(j)}({\bfa l}_{s,h})\phi_{s,\nu}({\bfa x},t) \;{\rm d}s, \nonumber \\
      [ {\bfa R}_{s}^{(q)}(t, {\cal C}(t), {\cal C}_s(t), {\cal L}_{s}(t)) ]_{\nu} = & \int_{\Gamma_{h}(t)} \hat{r}_{s}^{(q)}({\bfa c}_{h}, {\bfa c}_{s,h}, {\bfa l}_{s,h})\phi_{s,\nu}({\bfa x},t) \;{\rm d}s, \nonumber\\
      [{\bfa F}_{c_s}^{(q)}(t, {\cal C}_{s}(t))]_{\nu} = & \int_{\Gamma_{h}(t)} f_{c_s}^{(q)}({\bfa c}_{s,h})\phi_{s,\nu}({\bfa x},t) \;{\rm d}s. \nonumber
    \end{align}    
    \end{subequations}

  \section{Temporal discretisation}
  \label{sec:tdisc}
    To obtain a temporal discretisation of \eqref{eq:fem-ext-bulk-eq-sd}, \eqref{eq:fem-ext-surf-eq-sd}, \eqref{eq:fem-int-bulk-eq-sd} and \eqref{eq:fem-int-surf-eq-sd}, we subdivide the time domain $[0,T]$ into $N_{T}$ equal time intervals of size $\Delta t = T/N_{T}$ and denote $t^{n} = n\Delta t$, $n = 0,\ldots,N_{T}$. Note that the superscript $n$ will be used to denote the time level $t^{n}$. Following \cite{macdonald_2016}, we discretise the ALE mapping using linear interpolation between time levels:
    \[
      {\cal A}_{h,\Delta t}({\bfa \xi},t) = \frac{t - t^{n}}{\Delta t}{\cal A}_{h}({\bfa \xi},t^{n+1}) + \frac{t^{n+1} - t}{\Delta t}{\cal A}_{h}({\bfa \xi},t^{n}), \qquad t\in[t^{n},t^{n+1}),
    \]
    where ${\cal A}_{h}(\cdot,t)$ is the piecewise linear map at time $t$. The mesh velocity is therefore piecewise constant in time and is given by
    \begin{align}
      {\bfa w}_{h,\Delta t}({\bfa \xi}, t^{n+1}) = & \frac{{\cal A}_{h}({\bfa \xi},t^{n+1}) - {\cal A}_{h}({\bfa \xi},t^{n})}{\Delta t}, \nonumber \\
      {\bfa w}_{h,\Delta t}({\bfa x},t) = & {\bfa w}_{h,\Delta t}({\bfa \xi}, t^{n+1}) \circ {\cal A}^{-1}_{h,\Delta t}({\bfa x},t), \qquad t\in[t^{n},t^{n+1}). \nonumber
    \end{align}
    
    Following \cite{macdonald_2016}, the temporal discretisation of \eqref{eq:fem-ext-bulk-eq-sd}, \eqref{eq:fem-ext-surf-eq-sd}, \eqref{eq:fem-int-bulk-eq-sd} and \eqref{eq:fem-int-surf-eq-sd} is obtained using a modified Crank-Nicolson semi-implicit approach. First, for each $j = 1,\ldots,N_{l_s}$ and $q = 1,\ldots,N_{c_s}$, the surface approximations \eqref{eq:fem-ext-surf-eq-sd} and \eqref{eq:fem-int-surf-eq-sd} are predicted using an implicit-explicit Euler method:
    \begin{align}
    & \left\{ M_{s}^{n+1} + \Delta t \left[ K_{s}^{n+1}(D_{l_s}^{(j)}) - B_{s}^{n+1}({\bfa w}_{h}^{n+1}; {\bfa u}_{\Gamma}^{n+1}) \right] \right\}\widetilde{{\bfa L}}_{s}^{(j),n+1} = \nonumber \\
    & \hspace{3cm} M_s^n {\bfa L}_s^{(j),n} - \Delta t \left[ {\bfa G}_{s}^{(j),n}({\cal L}^n, {\cal L}_{s}^n) - {\bfa F}_{l_s}^{(j),n}({\cal L}_{s}^n) \right], \nonumber
    \end{align}
    and
    \begin{align}
    & \left\{ M_{s}^{n+1} + \Delta t \left[ K_{s}^{n+1}(D_{c_s}^{(q)}) - B_{s}^{n+1}({\bfa w}_{h}^{n+1}; {\bfa u}_{\Gamma}^{n+1}) \right] \right\} \widetilde{{\bfa C}}_{s}^{(q),n+1} = \nonumber \\
    & \hspace{3cm} M_s^n {\bfa C}_s^{(q),n} - \Delta t \left[ {\bfa R}_{s}^{(q),n}({\cal C}^n, {\cal C}_s^n, {\cal L}_{s}^n) - {\bfa F}_{c_s}^{(q),n}({\cal C}_{s}^n) \right],
    \label{eq:tdisc-int-pred}
    \end{align}
    respectively, where the diffusion and advection terms are treated implicitly whilst the reaction terms are dealt with explicitly. For each $i = 1,\ldots,N_l$ and $p = 1,\ldots,N_c$, the bulk approximations \eqref{eq:fem-ext-bulk-eq-sd} and \eqref{eq:fem-int-bulk-eq-sd} are then updated using a Crank-Nicolson step
    \begin{align}
      & \left\{ M_{\Xi}^{n+1} + \frac{\Delta t}{2}\left[ B_{\Xi}^{n+1}({\bfa w}_{h}^{n+1}; {\bfa u}_{\Xi}^{n+1}) - A_{\Gamma}^{n+1}({\bfa w}_{h}^{n+1}; {\bfa u}_{\Gamma}^{n+1}, {\bfa n}_{\Xi}^{n+1}) + K_{\Xi}^{n+1}(D_l^{(i)}) \right] \right\}{\bfa L}^{(i),n+1} \;= \nonumber \\
      & \hspace{0.5cm} \left\{ M_{\Xi}^{n} - \frac{\Delta t}{2}\left[ B_{\Xi}^{n}({\bfa w}_{h}^{n}; {\bfa u}_{\Xi}^{n}) - A_{\Gamma}^{n}({\bfa w}_{h}^{n}; {\bfa u}_{\Gamma}^{n}, {\bfa n}_{\Xi}^{n}) + K_{\Xi}^{n}(D_l^{(i)}) \right] \right\}{\bfa L}^{(i),n} \;+ \nonumber \\ 
      & \hspace{0.5cm} \frac{\Delta t}{2} \left[ {\bfa G}^{(i),n+1}({\cal L}^{n+1}, \widetilde{{\cal L}}_{s}^{n+1}) + {\bfa G}^{(i),n}({\cal L}^{n}, {\cal L}_{s}^{n}) \right] + \frac{\Delta t}{2} \left[ {\bfa F}_{l}^{(i),n+1}({\cal L}^{n+1}) + {\bfa F}_{l}^{(i),n}({\cal L}^{n}) \right],
    \label{eq:tdisc-ext-bulk}
    \end{align}
    and
    \begin{align}
      & \left\{ M_{\Omega}^{n+1} + \frac{\Delta t}{2} \left[ B_{\Omega}^{n+1}({\bfa w}_{h}^{n+1}; {\bfa u}_{\Omega}^{n+1}) - A_{\Gamma}^{n+1}({\bfa w}_{h}^{n+1}; {\bfa u}_{\Gamma}^{n+1}, {\bfa n}_{\Omega}^{n+1}) + K_{\Omega}^{n+1}(D_c^{(p)}) \right] \right\}{\bfa C}^{(p),n+1} \;= \nonumber \\
      & \hspace{0.5cm} \left\{ M_{\Omega}^{n} - \frac{\Delta t}{2} \left[ B_{\Omega}^{n}({\bfa w}_{h}^{n}; {\bfa u}_{\Omega}^{n}) - A_{\Gamma}^{n}({\bfa w}_{h}^{n}; {\bfa u}_{\Gamma}^{n}, {\bfa n}_{\Omega}^{n}) + K_{\Omega}^{n}(D_c^{(p)}) \right] \right\}{\bfa C}^{(p),n} \;+ \nonumber \\
      & \hspace{0.5cm} \frac{\Delta t}{2} \left[ {\bfa R}^{(p),n+1}({\cal C}^{n+1}, \widetilde{{\cal C}}_{s}^{n+1}, \widetilde{{\cal L}}_s^{n+1}) + {\bfa R}^{(p),n}({\cal C}^{n}, {\cal C}_{s}^{n}, {\cal L}_s^{n}) \right] + \frac{\Delta t}{2} \left[ {\bfa F}_{c}^{(p),n+1}({\cal C}^{n+1}) + {\bfa F}_{c}^{(p),n}({\cal C}^{n}) \right],
    \label{eq:tdisc-int-bulk}
    \end{align}
    respectively, where $\widetilde{{\cal L}}_{s}(t) = \{ \widetilde{{\bfa L}}_{s}^{(j)}(t) \}_{j=1}^{N_{l_s}}$ and $\widetilde{{\cal C}}_{s}(t) = \{ \widetilde{{\bfa C}}_{s}^{(q)}(t) \}_{q=1}^{N_{c_s}}$, $\forall t \in \overline{I}$. Note that in general \eqref{eq:tdisc-ext-bulk} and \eqref{eq:tdisc-int-bulk} result in nonlinear systems. In order to guarantee that the surface approximations are second-order, we perform the correction steps
    \begin{align}
    & \left\{ M_{s}^{n+1} + \frac{\Delta t}{2} \left[ K_{s}^{n+1}(D_{l_s}^{(j)}) - B_{s}^{n+1}({\bfa w}_{h}^{n+1}; {\bfa u}_{\Gamma}^{n+1}) \right] \right\}{\bfa L}_{s}^{(j),n+1} = \nonumber \\
    & \hspace{1cm} \left\{ M_{s}^{n} - \frac{\Delta t}{2} \left[ K_{s}^{n}(D_{l_s}^{(j)}) - B_{s}^{n}({\bfa w}_{h}^{n}; {\bfa u}_{\Gamma}^{n}) \right] \right\} {\bfa L}_s^{(j),n} + \frac{\Delta t}{2} \left[ {\bfa F}_{l_s}^{(j),n+1}(\widetilde{{\cal L}}_{s}^{n+1}) + {\bfa F}_{l_s}^{(j),n}({\cal L}_{s}^{n}) \right] \;- \nonumber \\
    & \hspace{1cm} \frac{\Delta t}{2} \left[ {\bfa G}_{s}^{(j),n+1}({\cal L}^{n+1}, \widetilde{{\cal L}}_{s}^{n+1}) + {\bfa G}_{s}^{(j),n}({\cal L}^{n}, {\cal L}_{s}^{n}) \right],
    \label{eq:tdisc-ext-corr}
    \end{align}
    and
    \begin{align}
    & \left\{ M_{s}^{n+1} + \frac{\Delta t}{2} \left[ K_{s}^{n+1}(D_{c_s}^{(q)}) - B_{s}^{n+1}({\bfa w}_{h}^{n+1}; {\bfa u}_{\Gamma}^{n+1}) \right] \right\} {\bfa C}_{s}^{(q),n+1} = \nonumber \\
    & \hspace{1cm} \left\{ M_{s}^{n} - \frac{\Delta t}{2} \left[ K_{s}^{n}(D_{c_s}^{(q)}) - B_{s}^{n}({\bfa w}_{h}^{n}; {\bfa u}_{\Gamma}^{n}) \right] \right\} {\bfa C}_s^{(q),n} + \frac{\Delta t}{2} \left[ {\bfa F}_{c_s}^{(q),n+1}(\widetilde{{\cal C}}_{s}^{n+1}) + {\bfa F}_{c_s}^{(q),n}({\cal C}_{s}^{n}) \right] \;- \nonumber \\
    & \hspace{1cm} \frac{\Delta t}{2} \left[ {\bfa R}_{s}^{(q),n+1}({\cal C}^{n+1}, \widetilde{{\cal C}}_s^{n+1}, \widetilde{{\cal L}}_{s}^{n+1}) + {\bfa R}_{s}^{(q),n}({\cal C}^n, {\cal C}_s^n, {\cal L}_{s}^n) \right],
    \label{eq:tdisc-int-corr}
    \end{align}
    respectively. Note that the correction steps \eqref{eq:tdisc-ext-corr} and \eqref{eq:tdisc-int-corr} require the solution of linear systems as ${\cal L}^{n+1}$, $\widetilde{{\cal L}}_s^{n+1}$, ${\cal C}^{n+1}$ and $\widetilde{{\cal C}}_{s}^{n+1}$ are known quantities.
    \begin{remark}
    \label{rem:tdisc-imex}
    		In the results presented in \S\ref{sec:nexp-cc}, only interior bulk species will be considered and thus, we choose an implicit-explicit backward Euler scheme in place of \eqref{eq:tdisc-int-bulk}, where the ALE and diffusion terms are treated implicitly whilst the reaction terms and boundary conditions are treated explicitly. In the results presented in \S\ref{sec:nexp-cell}, the bulk reaction terms are linear with respect to the bulk species and therefore, \eqref{eq:tdisc-ext-bulk} and \eqref{eq:tdisc-int-bulk} result in linear systems.
    \end{remark}

  \section{Mesh generation}
  \label{sec:meshgen}

    \subsection{Surface domain}
    \label{sec:meshgen-sf}
      Following \cite{macdonald_2016}, we consider boundary movements given by the following evolution law for the normal velocity
      \begin{equation}
	    {\cal V}({\bfa x},t) := \dot{\bfa x} \cdot {\bfa n} = \alpha({\bfa x},t)\kappa + \beta({\bfa x},t), \qquad {\bfa x} \in \Gamma(t),
      \label{eq:meshgen-sf-normalvel}
      \end{equation}
      where $\alpha$ and $\beta$ are given functions and $\kappa$ is the curvature. In this article, the curve $\Gamma(t)$ is represented by the position of a discrete set of nodal points on the curve. These mesh nodes evolve according to the equation 
      \begin{equation}
	    \dot{\bfa x} = {\cal V}{\bfa n} + {\cal B}{\bfa t}, 
      \label{eq:meshgen-sf-bndeqn}
      \end{equation}
      where ${\bfa n}$ and ${\bfa t}$ are the outward unit normal and unit tangent vectors, respectively. The tangential motion ${\cal B}$ is necessary to maintain mesh quality.
      
      Following \cite{mackenzie_2019}, the normal and tangential velocities can then be expressed in terms of the parameterisation $[0,1] \ni \sg \mapsto {\bfa x}(\sg,t) \in \Gamma(t)$:
      \begin{subequations}
        \begin{align}
        \label{eq:meshgen-sf-normalsigma}
	      & {\cal V} = \dot{\bfa x} \cdot {\bfa n} = \alpha({\bfa x},t) \left( \frac{{\bfa x}_{\sg\sg} \cdot {\bfa n}}{|{\bfa x}_{\sg}|^{2}} \right) + \beta({\bfa x},t), \\
        \label{eq:meshgen-sf-tangentsigma}
	      & {\cal B} = \dot{\bfa x} \cdot {\bfa t} = \frac{P}{\tau} (M|{\bfa x}_{\sg}|)^{-2}(M|{\bfa x}_{\sg}|)_{\sg},      
        \end{align}
        \label{eq:meshgen-sf-normtan}
      \end{subequations}
      where $F_{\sg}=\partial F/\partial \sg$, $|\bfa{a}|$ denotes the $l_{2}$ norm and $\tau$ is the mesh relaxation time. In this article, we do not consider adaptivity and therefore, the spatial balancing operator $P = 1$ and monitor function $M = 1$ but are included above for completeness. In the interests of brevity, we omit the derivation of \eqref{eq:meshgen-sf-normtan} as they are covered in detail in \cite{mackenzie_2019}. To solve \eqref{eq:meshgen-sf-bndeqn}, we follow \cite{macdonald_2016, mackenzie_2019} by using second-order central finite differences to approximate the spatial derivatives and a first-order backward Euler scheme to approximate the temporal derivatives. 
      
      As the mesh is evolved from time level $t^{n}$ to $t^{n+1}$, $n = 0,\ldots,N_T$, the surface is evolved first and then the nodal positions are used as Dirichlet boundary conditions for the bulk mesh movement which we discuss in the next section.

    \subsection{Bulk domain}
    \label{sec:meshgen-blk}
      In the interests of brevity, we discuss the motion of the interior region $\Omega_h(t)$ only, as the procedure is identical for the extracellular environment with the minor addition, that after the surface $\Gamma_h(t)$ has evolved, the outer boundary is translated so that the interior region remains in the centre. 
      
      The evolution of the bulk mesh is assumed to satisfy a Moving Mesh Partial Differential Equation (MMPDE) \cite{huang_1994, huang_1997, huang_1998, huang_2001}. The MMPDE is derived for the inverse ALE mapping ${\cal A}^{-1}({\bfa x},t) = {\bfa \xi}({\bfa x},t)$ as this prevents mesh crossings or foldings \cite{dvinsky_1991}. The mapping ${\bfa \xi}({\bfa x}) = (\xi({\bfa x}),\eta({\bfa x}))$, corresponding to a fixed $t$, is chosen so that it minimises the functional \cite{huang_1998}
      \begin{equation}
		I[{\bfa \xi}] = \frac{1}{2} \int_{\Omega(t)} \left[ (\nabla\xi)^{T}G^{-1}(\nabla\xi) + (\nabla\eta)^{T}G^{-1}(\nabla\eta) \right] \;{\rm d}{\bfa x}, \nonumber
      \end{equation}
      where $G$ is a $2\times 2$ monitor matrix. Following \cite{macdonald_2016}, we choose the monitor matrix to be
      \[
	G = \begin{bmatrix}
	      M & 0 \\
	      0 & M
	    \end{bmatrix},
      \]
      where $M$ is the monitor function. Rather than minimising the functional directly, the mapping is evolved according to the modifed gradient flow equations \cite{huang_1998, macdonald_2016}
      \[
	\ptl{\xi}{t} = \frac{\widehat{P}}{\widehat{\tau}}\nabla \cdot \left(G^{-1}\nabla\xi \right), \qquad \mbox{and} \qquad 
	\ptl{\eta}{t} = \frac{\widehat{P}}{\widehat{\tau}}\nabla \cdot \left(G^{-1}\nabla\eta \right),
      \]
      where $\widehat{\tau}$ denotes the mesh relaxation time and $\widehat{P}$ denotes the spatial balancing operator. To obtain an evolution equation for the physical mesh points $\{{\bfa x}_{i}(t)\}_{i=1}^{{\cal N}^{\Omega}}$, we interchange the dependent and independent variables to give the MMPDE \cite{macdonald_2016}
      \begin{equation}
	\widehat{\tau}\, \widehat{p}({\bfa x}, t) \ptl{{\bfa x}}{t} = \left( a{\bfa x}_{\xi\xi} + b{\bfa x}_{\xi\eta} + c{\bfa x}_{\eta\eta} + d{\bfa x}_{\xi} + e{\bfa x}_{\eta} \right), \qquad (\xi,\eta) \in \Omega_{c},
      \label{eq:meshgen-blk-mmpde}
      \end{equation}
      where $\widehat{p}({\bfa x}, t) = 1 / \widehat{P}({\bfa x},t)$. The coefficients of the MMPDE are defined by \cite{macdonald_2016}
      \begin{align}
	& a = \frac{1}{M}\left[ \frac{x_{\eta}^{2} + y_{\eta}^{2}}{J^{2}} \right], \quad 
	  b = -\frac{2}{M}\left[ \frac{ x_{\xi}x_{\eta} + y_{\xi}y_{\eta} }{J^{2}} \right], \quad
	  c = \frac{1}{M}\left[ \frac{x_{\xi}^{2} + y_{\xi}^{2}}{J^{2}} \right], \nonumber \\
	& d = \frac{1}{M^{2}}\left[ \frac{M_{\xi}\left( x_{\eta}^{2} + y_{\eta}^{2} \right) - M_{\eta}\left( x_{\xi}x_{\eta} + y_{\xi}y_{\eta} \right) }{J^{2}} \right], \nonumber \\
	& e = \frac{1}{M^{2}}\left[ \frac{M_{\eta}\left( x_{\xi}^{2} + y_{\xi}^{2} \right) - M_{\xi}\left( x_{\xi}x_{\eta} + y_{\xi}y_{\eta} \right) }{J^{2}} \right],
      \label{eq:meshgen-blk-coeffs}
      \end{align}
      where $J = x_{\xi}y_{\eta} - x_{\eta}y_{\xi}$ is the Jacobian of the mapping. To construct the initial bulk mesh, the equidistributed surface mesh is used as a set of fixed points for a Fortran implementation of the Matlab toolbox Distmesh \cite{persson_2004}. The initial computational mesh is then defined as: $\Omega_{c,h} = \Omega_{h}(0)$. As mentioned in \S\ref{sec:meshgen-sf}, the nodal positions of the surface mesh evolution are used as Dirichlet boundary conditions for the bulk MMPDE (\ref{eq:meshgen-blk-mmpde}).
      
      For the numerical solution of (\ref{eq:meshgen-blk-mmpde}), we discretise in space using piecewise linear finite elements and in time using a backward Euler method. To avoid solving nonlinear algebraic systems, the coefficients (\ref{eq:meshgen-blk-coeffs}), and spatial balancing operator $\widehat{p}$, are treated explicitly. The discretisation of the MMPDE (\ref{eq:meshgen-blk-mmpde}) therefore takes the form: find ${\bfa x}_{h}^{n+1} \in \left[ {\cal L}^{1}(\Omega_{c,h}) \right]^{2}$ such that
      \begin{align}
	    & \widehat{\tau} \int_{\Omega_{c,h}} \widehat{p}^{\,n}({\bfa x}_{h}) \left( \frac{{\bfa x}_{h}^{n+1} - {\bfa x}_{h}^{n}}{\Delta t} \right) \cdot \hat{{\bfa v}}_{h} \;{\rm d}{\bfa\xi} +
	  \int_{\Omega_{c,h}} \left( {\bfa x}_{h}^{n+1} \right)_{\xi} \cdot \left( a^{n}\hat{{\bfa v}}_{h} \right)_{\xi} \;{\rm d}{\bfa \xi} + \int_{\Omega_{c,h}} \left( {\bfa x}_{h}^{n+1} \right)_{\eta} \cdot \left( c^{n}\hat{{\bfa v}}_{h} \right)_{\eta} \;{\rm d}{\bfa \xi} \;+ \nonumber \\
	    & \hspace{0.5cm} \frac{1}{2}\int_{\Omega_{c,h}} \left( {\bfa x}_{h}^{n+1} \right)_{\xi} \cdot \left( b^{n}\hat{{\bfa v}}_{h} \right)_{\eta} + \left( {\bfa x}_{h}^{n+1} \right)_{\eta} \cdot \left( b^{n}\hat{{\bfa v}}_{h} \right)_{\xi} \;{\rm d}{\bfa \xi} 
	  = \int_{\Omega_{c,h}} \left[ d^{n}\left( {\bfa x}_{h}^{n+1} \right)_{\xi} + e^{n} \left( {\bfa x}_{h}^{n+1} \right)_{\eta} \right] \cdot \hat{{\bfa v}}_{h} \;{\rm d}{\bfa \xi}, \nonumber
      \end{align}
      for all $\hat{{\bfa v}}_{h} \in \left[ {\cal L}_{0}^{1}(\Omega_{c,h}) \right]^{2}$. The resulting linear system is solved using the UMFPACK library \cite{davis_2004} of the SuiteSparse software collection \cite{davis_2006, davis_nodate}. 
      
      As the positions $\{ {\bfa x}_{i}(t) \}_{i=1}^{{\cal N}^{\Omega}}$ are approximated using piecewise linear finite elements, the coefficients are in general piecewise constant and hence, not globally continuous. Therefore, within a standard Galerkin formulation, the coefficients require a numerical recovery procedure so that they can be differentiated accurately. Here, we use a spatial averaging technique; for example, let $a\colon \Omega_{c,h} \rightarrow \RR$ be a piecewise constant function, then one can define the function $\widetilde{a}$ by
      \begin{align}
	    \widetilde{a} = \frac{1}{|\Omega_{c,h}|} \int_{\Omega_{c,h}} a \;{\rm d}{\bfa \xi} = \frac{1}{|\Omega_{c,h}|} \sum_{K \in {\cal T}_{h,c}^{\Omega}} \int_{K} \widehat{a}_{K} \;{\rm d}{\bfa \xi} = \frac{1}{|\Omega_{c,h}|} \sum_{K \in {\cal T}_{h,c}^{\Omega}} \widehat{a}_{K} |K|, \nonumber
      \end{align}
      where $\widehat{a}_{K} = a|_{K}$ is the local restriction of $a$ to element $K$ of the triangulation ${\cal T}_{h,c}^{\Omega}$. This procedure results in $\widetilde{a} \in C^{0}(\Omega_{c,h})$ and therefore, we can approximate $\widetilde{a}$ using piecewise linear finite elements. The finite element approximation $\widetilde{a}_{h}$ can then be differentiated in the usual way. Coincidentally, we also note that the spatial balancing operator may in general depend on these coefficients and is therefore calculated using the recovered coefficients. The increased regularity of the coefficients, due to the numerical recovery technique, produces a more robust moving mesh procedure and therefore, in general, will require remeshing techniques to be used far less frequently.

  \section{Global conservation of the discrete formulation on evolving domains}
  \label{sec:conserv}

    \subsection{Spatially semi-discrete}
    \label{sec:conserv-sd}
      For the reasons mentioned at the beginning of \S\ref{sec:conserv-cont}, we consider global conservation of the interior bulk and surface species only. For notational convenience, we let ${\bfa e}^{T}$ and ${\bfa e}_s^{T}$ denote the ${\cal N}^{\Omega}$-dimensional and ${\cal N}_s^{\Gamma}$-dimensional row vectors of ones, respectively. For the proof of global conservation of the bulk-surface reaction-diffusion system given by \eqref{eq:rd-int-bulk} and \eqref{eq:rd-int-surf}, in the semi-discrete sense, we require the following observations:
      
      \begin{lemma}
      \label{lm:sd-totalamount}
	    At any time $t\in \overline{I}$, for each $p = 1,\ldots,N_c$ and $q = 1,\ldots,N_{c_s}$, the total amount of $c_{h}^{(p)}$ and $c_{s,h}^{(q)}$ can be determined in terms of the column sums of the time-dependent mass matrices; that is,
	    \[
	      \int_{\Omega_{h}(t)} c_{h}^{(p)}({\bfa x},t) \;{\rm d}{\bfa x} = {\bfa e}^{T}M_{\Omega}(t){\bfa C}^{(p)}(t), \quad \mbox{ and } \quad \int_{\Gamma_{h}(t)} c_{s,h}^{(q)}({\bfa x},t) \;{\rm d}s = {\bfa e}_{s}^{T}M_{s}(t){\bfa C}_{s}^{(q)}(t),
	    \]
	    where the time dependent mass matrices $M_{\Omega}(t)$ and $M_s(t)$ are as defined in \eqref{eq:fem-matrices-M} and \eqref{eq:fem-matsurf-M}, respectively.
      \end{lemma}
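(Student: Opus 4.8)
The plan is to use the only structural fact about the finite element spaces that the statement requires: the nodal (Lagrange) basis functions form a partition of unity, so that $\sum_{\nu=1}^{{\cal N}^{\Omega}}\phi_{\nu}({\bfa x},t) = 1$ pointwise on $\Omega_{h}(t)$ and, likewise, $\sum_{\nu=1}^{{\cal N}_{s}^{\Gamma}}\phi_{s,\nu}({\bfa x},t) = 1$ on $\Gamma_{h}(t)$. I would establish the bulk identity first; the surface identity then follows by the identical argument after replacing $\Omega_{h}(t)$, $\phi_{\mu}$ and $M_{\Omega}$ by $\Gamma_{h}(t)$, $\phi_{s,\mu}$ and $M_{s}$.

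For the bulk, I substitute the finite element expansion of $c_{h}^{(p)}$ from \eqref{eq:fem-femapprox} into the integral and extract the spatially constant nodal coefficients $c_{\mu}^{(p)}(t)$:
\[
  \int_{\Omega_{h}(t)} c_{h}^{(p)}({\bfa x},t) \;{\rm d}{\bfa x} = \sum_{\mu=1}^{{\cal N}^{\Omega}} c_{\mu}^{(p)}(t) \int_{\Omega_{h}(t)} \phi_{\mu}({\bfa x},t) \;{\rm d}{\bfa x}.
\]
The crux is then to rewrite the integral of a single basis function as a column sum of the mass matrix. Inserting the partition of unity and using the definition \eqref{eq:fem-matrices-M} of $M_{\Omega}(t)$,
\[
  \int_{\Omega_{h}(t)} \phi_{\mu} \;{\rm d}{\bfa x} = \int_{\Omega_{h}(t)} \phi_{\mu}\sum_{\nu=1}^{{\cal N}^{\Omega}} \phi_{\nu} \;{\rm d}{\bfa x} = \sum_{\nu=1}^{{\cal N}^{\Omega}} [M_{\Omega}(t)]_{\nu,\mu}.
\]
Combining the two displays and recognising the double sum $\sum_{\nu}\sum_{\mu}[M_{\Omega}]_{\nu,\mu}\,c_{\mu}^{(p)}$ as the matrix--vector product ${\bfa e}^{T}M_{\Omega}(t){\bfa C}^{(p)}(t)$, where ${\bfa e}^{T}$ is the row vector of ones, yields the claimed bulk identity.

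I expect no substantive obstacle: the whole argument rests on the partition-of-unity property, which holds precisely because $\{\phi_{\mu}\}$ and $\{\phi_{s,\mu}\}$ are Lagrange bases of the spaces ${\cal L}^{1}(\Omega_{c,h})$ and ${\cal L}^{1}(\Gamma_{c,h})$, transported to $\Omega_{h}(t)$ and $\Gamma_{h}(t)$ through the discrete ALE map. The single point worth checking carefully is that this property is preserved under the mapping; but since $\phi_{\mu}({\bfa x},t) = \hat{\phi}_{\mu}\circ{\cal A}_{h}^{-1}(\cdot,t)$ and the reference basis already sums to one, the push-forward sums to one as well, so the identity is inherited at every $t\in\overline{I}$. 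The surface case is verbatim, using the closed curve $\Gamma_{h}(t)$ and its edge-wise linear basis together with \eqref{eq:fem-matsurf-M}, which completes the proof.
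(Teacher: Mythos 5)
Your proposal is correct and follows essentially the same route as the paper: the paper's proof likewise rests on the partition-of-unity property \eqref{eq:sd-pum} combined with the finite element expansion \eqref{eq:fem-femapprox}, merely stating the conclusion without writing out the column-sum computation you make explicit. Your additional remark that the partition of unity is inherited under the discrete ALE mapping is a sensible check but does not change the argument.
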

      \begin{proof}
	First, as a consequence of the finite element approximation, we note that the bulk and surface nodal basis functions form partitions of unity
	\begin{equation}
	\label{eq:sd-pum}
	  \sum_{\mu=1}^{{\cal N}^{\Omega}}\phi_{\mu}({\bfa x},t) = 1, \quad \mbox{and} \quad \sum_{\mu=1}^{{\cal N}_{s}^{\Gamma}}\phi_{s,\mu}({\bfa x},t) = 1.
	\end{equation}
	Then, for each $p = 1,\ldots,N_c$ and $q = 1,\ldots,N_{c_s}$, the proof follows straightforwardly by integrating $c_{h}^{(p)}(t)$ and $c_{s,h}^{(q)}(t)$ over $\Omega_{h}(t)$ and $\Gamma_h(t)$, respectively, and using the finite element approximation \eqref{eq:fem-femapprox}. 
	
      \end{proof}
      
      \begin{lemma}
      \label{lm:sd-stiffness}
	    For each $p = 1,\ldots,N_c$ and $q = 1,\ldots,N_{c_s}$, the column sums of the time-dependent bulk and surface stiffness matrices are zero; that is,
	    \[
	      {\bfa e}^{T}K_{\Omega}(t; D_c^{(p)}) = 0, \quad \mbox{and} \quad {\bfa e}_s^{T}K_{s}(t; D_{c_s}^{(q)}) = 0,
	    \]
	    where the time dependent stiffness matrices $K_{\Omega}(t; D_c^{(p)})$ and $K_s(t; D_{c_s}^{(q)})$ are as defined in \eqref{eq:fem-matrices-K} and \eqref{eq:fem-matsurf-K}, respectively.
      \end{lemma}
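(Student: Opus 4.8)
The plan is to reduce both identities to the partition-of-unity property \eqref{eq:sd-pum} already recorded in the proof of Lemma \ref{lm:sd-totalamount}. Consider the bulk case first. By the definition \eqref{eq:fem-matrices-K}, the $\mu$-th component of the row vector ${\bfa e}^{T}K_{\Omega}(t; D_c^{(p)})$ is the column sum $\sum_{\nu=1}^{{\cal N}^{\Omega}}[K_{\Omega}(t; D_c^{(p)})]_{\nu,\mu}$. I would substitute the integral definition and then, using linearity of both the integral and the gradient over the finite index set, move the summation over $\nu$ onto the basis functions inside the integrand.

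This gives, for each $\mu$,
\[
  \left[ {\bfa e}^{T}K_{\Omega}(t; D_c^{(p)}) \right]_{\mu} = \int_{\Omega_{h}(t)} D_c^{(p)}\, \nabla\phi_{\mu}({\bfa x},t) \cdot \nabla\!\left( \sum_{\nu=1}^{{\cal N}^{\Omega}} \phi_{\nu}({\bfa x},t) \right) \;{\rm d}{\bfa x}.
\]
By the first identity in \eqref{eq:sd-pum}, the inner sum equals the constant function $1$ on $\Omega_{h}(t)$, so its gradient vanishes identically and the integrand is zero. Hence every component of ${\bfa e}^{T}K_{\Omega}(t; D_c^{(p)})$ is zero, which is the first claim.

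For the surface stiffness matrix I would repeat the argument verbatim, with the tangential gradient $\nabla_{\Gamma}$ replacing $\nabla$ and the definition \eqref{eq:fem-matsurf-K} replacing \eqref{eq:fem-matrices-K}: the $\mu$-th component of ${\bfa e}_s^{T}K_{s}(t; D_{c_s}^{(q)})$ becomes $\int_{\Gamma_{h}(t)} D_{c_s}^{(q)}\,\nabla_{\Gamma}\phi_{s,\mu}\cdot\nabla_{\Gamma}\big(\sum_{\nu}\phi_{s,\nu}\big)\;{\rm d}s$, and the second identity in \eqref{eq:sd-pum} forces $\sum_{\nu}\phi_{s,\nu}\equiv 1$, so its tangential gradient---and therefore the whole integrand---vanishes.

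I do not anticipate any genuine obstacle here; the result is immediate once the partition of unity is available. The only points requiring a moment's care are that the (tangential) gradient commutes with the finite summation over $\nu$, which holds by linearity for finite sums of $H^{1}$ functions, and that the diffusion coefficients $D_c^{(p)}$ and $D_{c_s}^{(q)}$---even if spatially varying---multiply the integrand without being differentiated, so they do not interfere with pulling the sum through the gradient.
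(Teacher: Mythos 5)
Your proposal is correct and follows essentially the same route as the paper's own proof: both reduce the column sums to $\int D\,\nabla\phi_{\mu}\cdot\nabla\bigl(\sum_{\nu}\phi_{\nu}\bigr)$ (and its tangential analogue) by linearity of the gradient, and then invoke the partition of unity \eqref{eq:sd-pum} so that the gradient of the constant function $1$ vanishes. Your added remarks about commuting the finite sum with the gradient and about the diffusion coefficient not being differentiated are harmless refinements of the same argument.
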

      \begin{proof}
	    Making use of the fact that both the gradient and tangential gradient operators are linear and that the finite element basis functions form a partition of unity \eqref{eq:sd-pum}, the result follows immediately:
	    \begin{align}
	      {\bfa e}^{T}K_{\Omega}(t; D_c^{(p)}) = \sum_{\nu=1}^{{\cal N}^{\Omega}} \left[ K_{\Omega}(t; D_c^{(p)}) \right]_{\nu,\mu} = & \sum_{\nu=1}^{{\cal N}^{\Omega}} \int_{\Omega_{h}(t)} D_c^{(p)} \nabla\phi_{\mu}({\bfa x},t) \cdot \nabla\phi_{\nu}({\bfa x},t) \;{\rm d}{\bfa x} \nonumber \\
	      = & \int_{\Omega_{h}(t)} D_c^{(p)} \nabla\phi_{\mu}({\bfa x},t) \cdot \nabla\left(\sum_{\nu=1}^{{\cal N}^{\Omega}} \phi_{\nu}({\bfa x},t) \right) \;{\rm d}{\bfa x} = 0, \nonumber
	    \end{align}
	    for each $p = 1,\ldots,N_c$. Similarly for the surface stiffness matrix $K_s(t; D_{c_s}^{(q)})$.
	
      \end{proof}
      
      \begin{lemma}
      \label{lm:sd-advection}
	    The column sums of the time-dependent bulk and surface advection matrices are zero; that is
	    \[
	      {\bfa e}^{T}B_{\Omega}(t, {\bfa w}_h; {\bfa u}_{\Omega}) = 0, \quad \mbox{and} \quad {\bfa e}_s^{T}B_s(t, {\bfa w}_h; {\bfa u}_{\Gamma}) = 0,
	    \]
	    where the time-dependent advection matrices $B_{\Omega}(t, {\bfa w}_h; {\bfa u}_{\Omega})$ and $B_s(t, {\bfa w}_h; {\bfa u}_{\Gamma})$ are as defined in \eqref{eq:fem-matrices-B} and \eqref{eq:fem-matsurf-B}, respectively.
      \end{lemma}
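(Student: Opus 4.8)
The plan is to treat both claims exactly as in the proof of Lemma~\ref{lm:sd-stiffness}: in each case I sum the matrix entries over the test-function index $\nu$, move the sum inside the integral, and collapse it using the partition-of-unity identities \eqref{eq:sd-pum}. The only structural input beyond this is the behaviour of the relevant differential operator applied to the constant function $1$, together with the geometry of $\Gamma_h(t)$ for the surface part.

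For the bulk advection matrix the argument is immediate and purely algebraic. Summing \eqref{eq:fem-matrices-B} (with the interior substitutions that define $B_{\Omega}$) over $\nu$ and using the linearity of the gradient gives
\[
{\bfa e}^{T}B_{\Omega}(t,{\bfa w}_h;{\bfa u}_{\Omega}) = \int_{\Omega_h(t)} \phi_{\mu}\left[({\bfa w}_h-{\bfa u}_{\Omega})\cdot\nabla\Big(\sum_{\nu=1}^{{\cal N}^{\Omega}}\phi_{\nu}\Big)\right]{\rm d}{\bfa x}.
\]
Since $\sum_{\nu}\phi_{\nu}\equiv 1$ by \eqref{eq:sd-pum}, its gradient vanishes and the $\mu$-th column sum is zero for every $\mu$. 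This is the exact analogue of the stiffness computation, with the gradient $\nabla\phi_\mu$ replaced by the advective weight $({\bfa w}_h-{\bfa u}_{\Omega})\phi_\mu$, and it holds for \emph{any} choice of velocities.

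For the surface advection matrix the same collapse, applied to \eqref{eq:fem-matsurf-B}, reduces the column sum to
\[
{\bfa e}_s^{T}B_{s}(t,{\bfa w}_h;{\bfa u}_{\Gamma}) = \int_{\Gamma_h(t)} \nabla_{\Gamma}\cdot\left[\phi_{s,\mu}({\bfa w}_h-{\bfa u}_{\Gamma})\right]{\rm d}s,
\]
and it remains to show that this integral of a surface divergence over $\Gamma_h(t)$ vanishes. This is the crux of the lemma and the step I expect to require the most care, because — unlike the bulk case — it does not follow from an algebraic identity but from the fact that $\Gamma_h(t)$ is a \emph{closed} polygonal curve with no boundary, exactly the property invoked for the corresponding term in Theorem~\ref{thm:conserv-cont} (together with Assumption~\ref{ass:cont}, which renders the advective flux ${\bfa w}_h-{\bfa u}_{\Gamma}$ tangential and so kills the curvature contribution to the closed-curve divergence formula). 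To make this rigorous on the piecewise-linear curve I would split $\int_{\Gamma_h(t)}=\sum_{e\in{\cal E}^{\Gamma}}\int_e$, observe that on each straight edge the surface divergence reduces to the arc-length derivative of the tangential component of $\phi_{s,\mu}({\bfa w}_h-{\bfa u}_{\Gamma})$, apply the fundamental theorem of calculus edge by edge, and then collect the endpoint contributions at each shared vertex. The main obstacle is precisely this vertex bookkeeping: I must verify that the contributions from the two edges meeting at each node cancel around the closed loop, which is where the absence of a free endpoint (closedness) and Assumption~\ref{ass:cont} are genuinely needed, in contrast to the unconditional bulk identity.
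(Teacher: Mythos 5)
Your proposal is correct, and on the bulk matrix it is verbatim the paper's proof: linearity of $\nabla$ plus the partition of unity \eqref{eq:sd-pum} gives $\nabla\bigl(\sum_{\nu}\phi_{\nu}\bigr)=\nabla 1={\bfa 0}$, so each column sum vanishes unconditionally, for any velocities. On the surface matrix you part company with the paper in a substantive way. The paper performs the same partition-of-unity collapse and then disposes of $\int_{\Gamma_h(t)}\nabla_{\Gamma}\cdot\bigl[\phi_{s,\mu}({\bfa w}_h-{\bfa u}_{\Gamma})\bigr]\,{\rm d}s$ in one line, ``using the fact that the curve $\Gamma_h(t)$ is closed,'' with no appeal to tangentiality, mirroring the ``no tangential flux'' remark in the proof of Theorem \ref{thm:conserv-cont}. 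Your edge-by-edge argument is the honest version of that line: on each straight edge $e$ one has $\nabla_{\Gamma}\cdot{\bfa F}=\partial_s({\bfa F}\cdot{\bfa t}_e)$, and summing the endpoint contributions around the closed loop leaves the vertex sum $\sum_{v}{\bfa F}(v)\cdot({\bfa t}_{e_-}-{\bfa t}_{e_+})$, the discrete counterpart of the curvature term $\int_{\Gamma}\kappa\,{\bfa F}\cdot{\bfa n}\,{\rm d}s$ that survives on a smooth closed curve. This is generically nonzero --- take ${\bfa u}_{\Gamma}={\bfa 0}$ and a nodal mesh velocity with a normal component at a corner --- so closedness alone does not prove the surface identity; one genuinely needs the flux ${\bfa w}_h-{\bfa u}_{\Gamma}$ to be tangential, whence at a genuine corner it is orthogonal to the normals of both incident edges and (being continuous) vanishes there, while at collinear vertices ${\bfa t}_{e_-}={\bfa t}_{e_+}$ and the terms cancel outright; either way the bookkeeping closes. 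Two footnotes: the hypothesis you want at this level is the semi-discrete Assumption \ref{ass:sd} rather than the continuous Assumption \ref{ass:cont}; and strictly the paper states the lemma \emph{without} any such assumption --- it is only where the lemma is applied, in Theorems \ref{thm:conserv-sd} and \ref{thm:conserv-fd} under Assumptions \ref{ass:sd} and \ref{ass:fd}, that the surface claim is safe --- so your more careful formulation is, if anything, the more defensible one.
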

      \begin{proof}
	    Once again, making use of the linearity of the gradient operator as well as the partition of unity of the finite element basis functions \eqref{eq:sd-pum}, we have for the bulk advection matrix
	    \begin{align}
	      {\bfa e}^{T}B_{\Omega}(t, {\bfa w}_h; {\bfa u}_{\Omega}) = & \sum_{\nu=1}^{{\cal N}^{\Omega}} \left[ B_{\Omega}(t, {\bfa w}_{h}; {\bfa u}_{\Omega}) \right]_{\nu,\mu} = \sum_{\nu=1}^{{\cal N}^{\Omega}}\int_{\Omega_{h}(t)} \phi_{\mu}({\bfa x},t) \left[ ({\bfa w}_{h} - {\bfa u}_{\Omega})\cdot\nabla \phi_{\nu}({\bfa x},t) \right] \;{\rm d}{\bfa x} \nonumber \\
	      = & \int_{\Omega_{h}(t)} \phi_{\mu}({\bfa x},t) \left[ ({\bfa w}_{h} - {\bfa u}_{\Omega})\cdot\nabla \left(\sum_{\nu=1}^{{\cal N}^{\Omega}}\phi_{\nu}({\bfa x},t)\right)\right] \;{\rm d}{\bfa x} = 0. \nonumber
	    \end{align}
	    Using the fact that the curve $\Gamma_{h}(t)$ is closed and the partition of unity of the surface nodal basis functions, the result for the surface advection matrix $B_s(t, {\bfa w}_h; {\bfa u}_{\Gamma})$ follows similarly.
	
      \end{proof}
      
      Similar to the continuous case \S\ref{sec:conserv-cont}, in order to demonstrate global conservation we require the following assumption:
      
      \begin{assump}
      \label{ass:sd}
		We assume that on the boundary $\Gamma_h(t)$, the normal components of the ALE velocity and material velocity are identical; that is, ${\bfa w}_h \cdot {\bfa n}_h = {\bfa u}_{\Gamma} \cdot {\bfa n}_h$ where ${\bfa n}_h$ denotes the outward unit normal to $\Gamma_h(t)$.
      \end{assump}
      
      We are now ready to prove global conservation of the semi-discrete system given by \eqref{eq:fem-int-bulk-eq-sd} and \eqref{eq:fem-int-surf-eq-sd}.
      
      \begin{theorem}
      \label{thm:conserv-sd}
		Under the conditions of Assumption \ref{ass:sd} and provided
		\begin{equation}
		\label{eq:conserv-sd-sources}
		\sum_{p = 1}^{N_c} \int_{\Omega_h(t)} f_c^{(p)}({\bfa c}_h) \;{\rm d}{\bfa x} = 0, \quad \sum_{p = 1}^{N_c} \int_{\Gamma_h(t)} r^{(p)}({\bfa c}_h) \;{\rm d}s = 0, \quad \sum_{q = 1}^{N_{c_s}} \int_{\Gamma_h(t)} f_{c_s}^{(q)}({\bfa c}_{s,h}) \;{\rm d}s = 0,
		\end{equation}
		and
		\begin{equation}
		\label{eq:conserv-sd-interact}
		\sum_{p = 1}^{N_c} \int_{\Gamma_h(t)} \hat{r}^{(p)}({\bfa c}_h, {\bfa c}_{s,h}, {\bfa l}_{s,h}) \;{\rm d}s = \sum_{q = 1}^{N_{c_s}} \int_{\Gamma_h(t)} \hat{r}_s^{(q)}({\bfa c}_h, {\bfa c}_{s,h}, {\bfa l}_{s,h}) \;{\rm d}s,
		\end{equation}
		are satisfied, then the semi-discrete system given by \eqref{eq:fem-int-bulk-eq-sd} and \eqref{eq:fem-int-surf-eq-sd} is globally conservative independent of the ALE velocity; that is, given Lemma \ref{lm:sd-totalamount}, the following holds true:
		\[
	  	  \otl{}{t} \left\{ \sum_{p=1}^{N_c} {\bfa e}^{T}M_{\Omega}(t){\bfa C}^{(p)}(t) + \sum_{q=1}^{N_{c_s}} {\bfa e}_{s}^{T}M_{s}(t){\bfa C}_{s}^{(q)}(t) \right\} = 0,
		\]
		where the time-dependent mass matrices are given by \eqref{eq:fem-matrices-M} and \eqref{eq:fem-matsurf-M}.
      \end{theorem}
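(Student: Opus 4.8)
The plan is to mirror the continuous argument of Theorem \ref{thm:conserv-cont} at the matrix level, testing against the constant vector in place of the test function $v=1$. First I would left-multiply the interior bulk ODE system \eqref{eq:fem-int-bulk-eq-sd} by the row vector ${\bfa e}^{T}$ and sum over $p = 1,\ldots,N_c$, and separately left-multiply the interior surface ODE system \eqref{eq:fem-int-surf-eq-sd} by ${\bfa e}_{s}^{T}$ and sum over $q = 1,\ldots,N_{c_s}$. Because ${\bfa e}$ and ${\bfa e}_s$ are constant in time they commute with $\otl{}{t}$, so the leading terms become $\otl{}{t}\left({\bfa e}^{T}M_{\Omega}{\bfa C}^{(p)}\right)$ and $\otl{}{t}\left({\bfa e}_{s}^{T}M_{s}{\bfa C}_{s}^{(q)}\right)$, which by Lemma \ref{lm:sd-totalamount} are exactly the time derivatives of the total bulk and surface amounts we wish to control.

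Next I would eliminate the matrix contractions. By Lemma \ref{lm:sd-stiffness} the stiffness terms vanish, ${\bfa e}^{T}K_{\Omega}(t;D_c^{(p)}) = 0$ and ${\bfa e}_{s}^{T}K_{s}(t;D_{c_s}^{(q)}) = 0$; by Lemma \ref{lm:sd-advection} the advection terms vanish, ${\bfa e}^{T}B_{\Omega} = 0$ and ${\bfa e}_{s}^{T}B_{s} = 0$. The only remaining matrix term is the boundary-flux matrix $A_{\Gamma}(t,{\bfa w}_h; {\bfa u}_{\Gamma}, {\bfa n}_{\Omega})$ from \eqref{eq:fem-matrices-A}, whose every entry carries the scalar factor $({\bfa w}_h - {\bfa u}_{\Gamma})\cdot {\bfa n}_{\Omega}$. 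Invoking Assumption \ref{ass:sd} — that the normal components of the ALE and material velocities agree on $\Gamma_h(t)$, with the identification ${\bfa n}_{\Omega} = {\bfa n}_h$ — this factor is identically zero on $\Gamma_h(t)$, so $A_{\Gamma}$ is the zero matrix and its contribution drops out. This is the discrete analogue of the cancellation of the $({\bfa w} - {\bfa u}_{\Gamma})$ boundary term in the continuous proof.

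What then survives on the right-hand side are the reaction and interaction vectors. To convert these into integrals I would again use the partition of unity \eqref{eq:sd-pum}: summing the nodal entries, ${\bfa e}^{T}{\bfa F}_c^{(p)} = \int_{\Omega_h(t)} f_c^{(p)}({\bfa c}_h)\,{\rm d}{\bfa x}$, ${\bfa e}^{T}{\bfa R}^{(p)} = \int_{\Gamma_h(t)}\left[r^{(p)}({\bfa c}_h) + \hat{r}^{(p)}({\bfa c}_h,{\bfa c}_{s,h},{\bfa l}_{s,h})\right]\,{\rm d}s$, and likewise ${\bfa e}_{s}^{T}{\bfa F}_{c_s}^{(q)} = \int_{\Gamma_h(t)} f_{c_s}^{(q)}({\bfa c}_{s,h})\,{\rm d}s$ and ${\bfa e}_{s}^{T}{\bfa R}_{s}^{(q)} = \int_{\Gamma_h(t)} \hat{r}_s^{(q)}({\bfa c}_h,{\bfa c}_{s,h},{\bfa l}_{s,h})\,{\rm d}s$. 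Adding the bulk and surface identities (noting that $-{\bfa R}^{(p)}$ in \eqref{eq:fem-int-bulk-eq-sd} contributes $+{\bfa e}^{T}{\bfa R}^{(p)}$ while $+{\bfa R}_s^{(q)}$ in \eqref{eq:fem-int-surf-eq-sd} contributes $-{\bfa e}_{s}^{T}{\bfa R}_s^{(q)}$), the $f_c$, $r$ and $f_{c_s}$ sums vanish by the source conditions \eqref{eq:conserv-sd-sources}, and the $\hat{r}^{(p)}$ and $\hat{r}_s^{(q)}$ contributions cancel by the interaction balance \eqref{eq:conserv-sd-interact}, leaving the claimed zero.

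I expect no genuine obstacle: the three lemmas together with Assumption \ref{ass:sd} are precisely the discrete surrogates for the divergence theorem, the closedness of $\Gamma$, and the normal-velocity matching used in the continuous proof, so the argument is essentially a transcription. The one point requiring a moment's care is the partition-of-unity step applied to the boundary vector ${\bfa e}^{T}{\bfa R}^{(p)}$, whose entries involve the \emph{bulk} basis functions $\{\phi_\mu\}$ integrated over $\Gamma_h(t)$: one must confirm that these restrict to a partition of unity on $\Gamma_h(t)$ (the interior nodal functions vanishing there, the boundary nodal functions summing to one), which holds because $\sum_{\mu}\phi_{\mu}\equiv 1$ throughout $\overline{\Omega_h(t)}$.
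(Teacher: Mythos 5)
Your proposal is correct and follows essentially the same route as the paper's proof: left-multiplying by ${\bfa e}^{T}$ and ${\bfa e}_s^{T}$ is exactly the paper's ``take column sums'' step, followed by the same use of Assumption \ref{ass:sd} to annihilate $A_{\Gamma}$, Lemmas \ref{lm:sd-stiffness} and \ref{lm:sd-advection} for the stiffness and advection matrices, the partition of unity \eqref{eq:sd-pum} to convert the reaction vectors into the integrals appearing in \eqref{eq:conserv-sd-sources} and \eqref{eq:conserv-sd-interact}, and the final cancellation upon adding the bulk and surface identities. Your closing remark about the bulk basis functions restricting to a partition of unity on $\Gamma_h(t)$ is a valid point of care that the paper uses implicitly without comment, and your justification of it is correct.
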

      \begin{proof}
        Following from Assumption \ref{ass:sd}, the matrix $A_{\Gamma}(t,{\bfa w}_{h}; {\bfa u}_{\Gamma}, {\bfa n}_{\Omega})$ in \eqref{eq:fem-matrices-A} is zero. We take the column sum of \eqref{eq:fem-int-bulk-eq-sd}, apply Lemmas \ref{lm:sd-stiffness} and \ref{lm:sd-advection}, and sum over $p = 1,\ldots,N_c$ to give
        \begin{equation}
          \otl{}{t} \left[ \sum_{p=1}^{N_c}{\bfa e}^{T}M_{\Omega}(t){\bfa C}^{(p)}(t) \right] - \sum_{p=1}^{N_c}{\bfa e}^{T}{\bfa R}^{(p)}(t, {\cal C}(t), {\cal C}_{s}(t), {\cal L}_s(t)) = \sum_{p=1}^{N_c}{\bfa e}^{T}{\bfa F}_{c}^{(p)}(t, {\cal C}(t)).
          \label{eq:conserv-sd-bulk-eq1}
        \end{equation}
        Similarly, taking the column sum of \eqref{eq:fem-int-surf-eq-sd}, applying Lemmas \ref{lm:sd-stiffness} and \ref{lm:sd-advection}, and summing over $q = 1,\ldots,N_{c_s}$ yields
        \begin{equation}
    	      \otl{}{t} \left[ \sum_{q=1}^{N_{c_s}}{\bfa e}_{s}^{T}M_{s}(t){\bfa C}_{s}^{(q)}(t) \right] + \sum_{q=1}^{N_{c_s}}{\bfa e}_{s}^{T}{\bfa R}_{s}^{(q)}(t, {\cal C}(t), {\cal C}_s(t), {\cal L}_{s}(t)) = \sum_{q=1}^{N_{c_s}}{\bfa e}_{s}^{T}{\bfa F}_{c_s}^{(q)}(t, {\cal C}_{s}(t)).
        \label{eq:conserv-sd-surf-eq1}
        \end{equation}
        The second term on the left hand side of \eqref{eq:conserv-sd-bulk-eq1} is given by
        \begin{align}
          \sum_{p=1}^{N_c}{\bfa e}^{T}{\bfa R}^{(p)}(t, {\cal C}(t), {\cal C}_{s}(t), {\cal L}_s(t)) = & \sum_{p=1}^{N_c} \sum_{\nu=1}^{{\cal N}^{\Omega}} \left[ {\bfa R}^{(p)}(t, {\cal C}(t), {\cal C}_{s}(t), {\cal L}_s(t)) \right]_{\nu} \nonumber \\
          = & \sum_{p=1}^{N_c} \int_{\Gamma_h(t)} \left[ r^{(p)}({\bfa c}_h) + \hat{r}^{(p)}({\bfa c}_h, {\bfa c}_{s,h}, {\bfa l}_{s,h}) \right]\left( \sum_{\nu=1}^{{\cal N}^{\Omega}} \phi_{\nu}({\bfa x},t) \right) \;{\rm d}s \nonumber \\
          = & \sum_{p=1}^{N_c} \int_{\Gamma_h(t)}  \hat{r}^{(p)}({\bfa c}_h, {\bfa c}_{s,h}, {\bfa l}_{s,h}) \;{\rm d}s,
        \label{eq:conserv-sd-bulk-eq2}
        \end{align}
        where we have applied \eqref{eq:sd-pum} and \eqref{eq:conserv-sd-sources}. Using similar arguments, it can be shown that
        \begin{subequations}
        \begin{align}
        \label{eq:conserv-sd-surf-eq2}
          & \sum_{q=1}^{N_{c_s}} {\bfa e}_{s}^{T}{\bfa R}_{s}^{(q)}(t, {\cal C}(t), {\cal C}_s(t), {\cal L}_{s}(t)) = \sum_{q=1}^{N_{c_s}} \int_{\Gamma_h(t)} \hat{r}_s^{(q)}({\bfa c}_h, {\bfa c}_{s,h}, {\bfa l}_{s,h}) \;{\rm d}s, \\
        \label{eq:conserv-sd-bulk-eq3}          
          & \sum_{p=1}^{N_c} {\bfa e}^{T}{\bfa F}_{c}^{(p)}(t, {\cal C}(t)) = \sum_{p=1}^{N_c} \int_{\Omega_{h}(t)} f_c^{(p)}({\bfa c}_{h}) \;{\rm d}{\bfa x} = 0, \\
        \label{eq:conserv-sd-surf-eq3}
          & \sum_{q=1}^{N_{c_s}} {\bfa e}_{s}^{T}{\bfa F}_{c_s}^{(q)}(t, {\cal C}_{s}(t)) = \sum_{q=1}^{N_{c_s}} \int_{\Gamma_{h}(t)} f_{c_s}^{(q)}({\bfa c}_{s,h}) \;{\rm d}s = 0,
        \end{align}
        \label{eq:conserv-sd-sourcebcs}
        \end{subequations}
        where, once again, we have applied \eqref{eq:sd-pum} and \eqref{eq:conserv-sd-sources}. The result then follows immediately from adding \eqref{eq:conserv-sd-bulk-eq1} and \eqref{eq:conserv-sd-surf-eq1}, then applying \eqref{eq:conserv-sd-bulk-eq2}, \eqref{eq:conserv-sd-sourcebcs} and \eqref{eq:conserv-sd-interact}.

      \end{proof}

    \subsection{Fully discrete}
    \label{sec:conserv-fd}
      Following the semi-discrete case \S\ref{sec:conserv-sd}, in order to establish global conservation for the fully discrete system, we require the following assumption:
      
      \begin{assump}
      \label{ass:fd}
		For any time level $t^{n}$, $n = 0, \ldots, N_T$, we assume that on the boundary $\Gamma_h(t^{n}) = \Gamma_h^n$, the normal components of the ALE velocity and material velocity are identical; that is, ${\bfa w}_h^n \cdot {\bfa n}_h^n = {\bfa u}_{\Gamma}^n \cdot {\bfa n}_h^n$ where ${\bfa n}_h^n$ denotes the outward unit normal to $\Gamma_h^n$.
      \end{assump}
      
      Observe that Lemma \ref{lm:sd-stiffness} and Lemma \ref{lm:sd-advection} hold for any fixed time $t$ and therefore, hold at any time level $t^{n}$, $n = 0,\ldots,N_T$. We are now in a position to prove global conservation for the fully discrete system \eqref{eq:tdisc-int-pred}, \eqref{eq:tdisc-int-bulk} and \eqref{eq:tdisc-int-corr}.
      
      \begin{theorem}
      \label{thm:conserv-fd}
		Under the conditions of Assumption \ref{ass:fd} and provided
		\begin{equation}
		\label{eq:conserv-fd-sources}
		\sum_{p = 1}^{N_c} \int_{\Omega_h^n} f_c^{(p),n}({\bfa c}_h^n) \;{\rm d}{\bfa x} = 0, \quad \sum_{p = 1}^{N_c} \int_{\Gamma_h^n} r^{(p),n}({\bfa c}_h^n) \;{\rm d}s = 0, \quad \sum_{q = 1}^{N_{c_s}} \int_{\Gamma_h^n} f_{c_s}^{(q),n}({\bfa c}_{s,h}^n) \;{\rm d}s = 0,
		\end{equation}
		and
		\begin{equation}
		\label{eq:conserv-fd-interact}
		\sum_{p = 1}^{N_c} \int_{\Gamma_h^n} \hat{r}^{(p),n}({\bfa c}_h^n, {\bfa c}_{s,h}^n, {\bfa l}_{s,h}^n) \;{\rm d}s = \sum_{q = 1}^{N_{c_s}} \int_{\Gamma_h^n} \hat{r}_s^{(q),n}({\bfa c}_h^n, {\bfa c}_{s,h}^n, {\bfa l}_{s,h}^n) \;{\rm d}s,
		\end{equation}
		are satisfied for any time level $t^{n}$ $(n = 0,\ldots,N_T)$, then the fully discrete system given by \eqref{eq:tdisc-int-pred}, \eqref{eq:tdisc-int-bulk} and \eqref{eq:tdisc-int-corr} is globally conservative independent of the ALE velocity, the predicted solution $\widetilde{{\cal C}}_s(t)$ and time step size $\Delta t$; that is, for any time level $t^{n}$ $(n = 0,\ldots,N_T)$, we have
		\[
	  	  \sum_{p=1}^{N_c} {\bfa e}^{T}M_{\Omega}^{n+1}{\bfa C}^{(p),n+1} + \sum_{q=1}^{N_{c_s}} {\bfa e}_{s}^{T}M_{s}^{n+1}{\bfa C}_{s}^{(q),n+1} = \sum_{p=1}^{N_c} {\bfa e}^{T}M_{\Omega}^{n}{\bfa C}^{(p),n} + \sum_{q=1}^{N_{c_s}} {\bfa e}_{s}^{T}M_{s}^{n}{\bfa C}_{s}^{(q),n}.
		\]
      \end{theorem}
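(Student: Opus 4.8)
The plan is to mirror the semi-discrete argument of Theorem \ref{thm:conserv-sd}, now applied to the Crank--Nicolson bulk update \eqref{eq:tdisc-int-bulk} and the surface correction \eqref{eq:tdisc-int-corr}, exploiting the fact that Lemmas \ref{lm:sd-stiffness} and \ref{lm:sd-advection} hold at any fixed time and therefore at both $t^{n}$ and $t^{n+1}$. First I would left-multiply \eqref{eq:tdisc-int-bulk} by the row vector of ones ${\bfa e}^{T}$. By Assumption \ref{ass:fd} the coupling matrices $A_{\Gamma}^{n+1}$ and $A_{\Gamma}^{n}$ vanish; Lemma \ref{lm:sd-advection} gives ${\bfa e}^{T}B_{\Omega}^{n+1} = {\bfa e}^{T}B_{\Omega}^{n} = 0$ and Lemma \ref{lm:sd-stiffness} gives ${\bfa e}^{T}K_{\Omega}^{n+1} = {\bfa e}^{T}K_{\Omega}^{n} = 0$. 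Every matrix term carrying the factor $\tfrac{\Delta t}{2}$ therefore collapses, leaving only the mass contributions ${\bfa e}^{T}M_{\Omega}^{n+1}{\bfa C}^{(p),n+1}$ on the left and ${\bfa e}^{T}M_{\Omega}^{n}{\bfa C}^{(p),n}$ on the right, together with the reaction and boundary vectors.

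Next I would sum over $p = 1,\ldots,N_c$ and use the partition of unity \eqref{eq:sd-pum}, exactly as in \eqref{eq:conserv-sd-bulk-eq2}, to rewrite ${\bfa e}^{T}{\bfa F}_{c}^{(p)}$ and ${\bfa e}^{T}{\bfa R}^{(p)}$ as integrals of the kinetics over $\Omega_{h}$ and $\Gamma_{h}$. The source conditions \eqref{eq:conserv-fd-sources} annihilate the $f_{c}$ and $r^{(p)}$ contributions at both time levels, leaving only the bulk interaction integrals of $\hat{r}^{(p)}$ evaluated at $({\cal C}^{n+1},\widetilde{{\cal C}}_{s}^{n+1},\widetilde{{\cal L}}_{s}^{n+1})$ and at $({\cal C}^{n},{\cal C}_{s}^{n},{\cal L}_{s}^{n})$. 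I would then repeat these steps for the surface correction \eqref{eq:tdisc-int-corr}: left-multiply by ${\bfa e}_{s}^{T}$, kill $K_{s}^{n+1},K_{s}^{n},B_{s}^{n+1},B_{s}^{n}$ via the same two lemmas, sum over $q = 1,\ldots,N_{c_s}$, and use \eqref{eq:sd-pum} together with \eqref{eq:conserv-fd-sources} to remove the $f_{c_s}$ terms, leaving only the surface interaction integrals of $\hat{r}_{s}^{(q)}$ at the same two argument tuples.

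The decisive step is to add the resulting bulk and surface identities. Crucially, the bulk reaction vector ${\bfa R}^{(p),n+1}$ in \eqref{eq:tdisc-int-bulk} and the surface reaction vector ${\bfa R}_{s}^{(q),n+1}$ in \eqref{eq:tdisc-int-corr} are evaluated at \emph{identical} arguments $({\cal C}^{n+1},\widetilde{{\cal C}}_{s}^{n+1},\widetilde{{\cal L}}_{s}^{n+1})$, and likewise both level-$n$ vectors share the arguments $({\cal C}^{n},{\cal C}_{s}^{n},{\cal L}_{s}^{n})$. Hence the interaction balance \eqref{eq:conserv-fd-interact}, applied separately at each of the two argument tuples on the common curve $\Gamma_{h}^{n+1}$ and $\Gamma_{h}^{n}$, forces the $\hat{r}^{(p)}$ and $\hat{r}_{s}^{(q)}$ integrals to cancel, and the stated mass balance between $t^{n}$ and $t^{n+1}$ remains.

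The main obstacle is the appearance of the \emph{predicted} surface concentrations $\widetilde{{\cal C}}_{s}^{n+1}$ (and $\widetilde{{\cal L}}_{s}^{n+1}$) inside the time-$t^{n+1}$ source and interaction vectors. These are not the final solution values, so conservation cannot rely on any relation enforced on the actual update ${\cal C}_{s}^{n+1}$. The point to emphasise is that \eqref{eq:conserv-fd-sources} and \eqref{eq:conserv-fd-interact} are structural identities on the kinetics, holding for any admissible concentration field rather than only at the computed solution. Consequently, summing ${\bfa e}_{s}^{T}{\bfa F}_{c_s}^{(q),n+1}(\widetilde{{\cal C}}_{s}^{n+1})$ over $q$ and invoking \eqref{eq:sd-pum} produces the integral over $\Gamma_{h}^{n+1}$ of $\sum_{q}f_{c_s}^{(q),n+1}$ evaluated at the predicted field, which vanishes by \eqref{eq:conserv-fd-sources}; the $\hat{r}^{(p)}$--$\hat{r}_{s}^{(q)}$ balance likewise holds at the predicted arguments just as at genuine solution values. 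This is precisely what renders the conservation property independent of the predictor $\widetilde{{\cal C}}_{s}$ and of $\Delta t$, as claimed, with the predictor equation \eqref{eq:tdisc-int-pred} itself playing no further role beyond supplying $\widetilde{{\cal C}}_{s}^{n+1}$.
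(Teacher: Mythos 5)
Your proposal is correct, and it streamlines the paper's argument in a small but genuine way. The core machinery is identical — column sums (left-multiplication by ${\bfa e}^{T}$, ${\bfa e}_{s}^{T}$), Assumption \ref{ass:fd} to kill $A_{\Gamma}^{n}$ and $A_{\Gamma}^{n+1}$, Lemmas \ref{lm:sd-stiffness} and \ref{lm:sd-advection} at both time levels, and the partition of unity \eqref{eq:sd-pum} to convert the reaction vectors into integrals annihilated or matched by \eqref{eq:conserv-fd-sources} and \eqref{eq:conserv-fd-interact} — but the decomposition differs. The paper takes column sums of all three discrete equations, adds the predictor identity \eqref{eq:conserv-fd-pred1} to the bulk identity, then adds the corrector identity, and finally eliminates the spurious $\sum_{q}{\bfa e}_{s}^{T}M_{s}^{n+1}\widetilde{{\bfa C}}_{s}^{(q),n+1}$ mass terms by recognising a bracketed expression that vanishes by \eqref{eq:conserv-fd-pred1} itself. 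You instead add only the Crank--Nicolson bulk update \eqref{eq:tdisc-int-bulk} and the corrector \eqref{eq:tdisc-int-corr}, observing that ${\bfa R}^{(p),n+1}$ and ${\bfa R}_{s}^{(q),n+1}$ are evaluated at the \emph{identical} tuple $({\cal C}^{n+1},\widetilde{{\cal C}}_{s}^{n+1},\widetilde{{\cal L}}_{s}^{n+1})$ (and the level-$n$ vectors at $({\cal C}^{n},{\cal C}_{s}^{n},{\cal L}_{s}^{n})$), so \eqref{eq:conserv-fd-interact} cancels them tuple by tuple and the predictor equation never enters the balance at all. The two routes are algebraically equivalent — the paper's add-then-subtract of the predictor identity is exactly your omission of it — but yours makes the claimed independence from $\widetilde{{\cal C}}_{s}$ transparent from the outset rather than emergent at the last step. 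You also make explicit a point the paper uses silently: since the time-$t^{n+1}$ kinetics are evaluated at predicted rather than computed surface fields, hypotheses \eqref{eq:conserv-fd-sources} and \eqref{eq:conserv-fd-interact} must be read as structural identities holding for arbitrary admissible concentration fields (as they do, e.g., for the wave-pinning kinetics where $\hat{r}^{(1)}=\hat{r}_{s}^{(1)}$ pointwise), not merely at solution values; flagging this is a slight gain in rigour over the paper's presentation.
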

      \begin{proof}
        Similar to the proof of Theorem \ref{thm:conserv-sd}, from Assumption \ref{ass:fd} the matrix $A_{\Gamma}^{n}({\bfa w}_{h}^{n}; {\bfa u}_{\Gamma}^{n}, {\bfa n}_{\Omega}^n) = 0$ at all time levels $t^{n}$, $n = 0,\ldots,N_{T}$. We take column sums of \eqref{eq:tdisc-int-pred}, \eqref{eq:tdisc-int-bulk} and \eqref{eq:tdisc-int-corr}; apply Lemmas \ref{lm:sd-stiffness} and \ref{lm:sd-advection}; and sum over all $p = 1,\ldots,N_c$ and $q = 1,\ldots,N_{c_s}$, to give
        \begin{align}
        \label{eq:conserv-fd-pred1}
          \sum_{q=1}^{N_{c_s}} {\bfa e}_{s}^{T}M_{s}^{n+1} \widetilde{{\bfa C}}_{s}^{(q),n+1} = & \; \sum_{q=1}^{N_{c_s}} {\bfa e}_{s}^{T}M_s^n {\bfa C}_s^{(q),n} - \Delta t \sum_{q=1}^{N_{c_s}} \left[ {\bfa e}_{s}^{T}{\bfa R}_{s}^{(q),n}({\cal C}^n, {\cal C}_s^n, {\cal L}_{s}^n) - {\bfa e}_{s}^{T}{\bfa F}_{c_s}^{(q),n}({\cal C}_{s}^n) \right], \\
        \label{eq:conserv-fd-bulk1}
          \sum_{p=1}^{N_c} {\bfa e}^{T}M_{\Omega}^{n+1} {\bfa C}^{(p),n+1} = & \; \sum_{p=1}^{N_c} {\bfa e}^{T}M_{\Omega}^{n} {\bfa C}^{(p),n} + \frac{\Delta t}{2} \sum_{p=1}^{N_c} \left[ {\bfa e}^{T}{\bfa F}_{c}^{(p),n+1}({\cal C}^{n+1}) + {\bfa e}^{T}{\bfa F}_{c}^{(p),n}({\cal C}^{n}) \right] \;+ \nonumber \\
          & \;\frac{\Delta t}{2} \sum_{p=1}^{N_c} \left[ {\bfa e}^{T}{\bfa R}^{(p),n+1}({\cal C}^{n+1}, \widetilde{{\cal C}}_{s}^{n+1}, \widetilde{{\cal L}}_s^{n+1}) + {\bfa e}^{T}{\bfa R}^{(p),n}({\cal C}^{n}, {\cal C}_{s}^{n}, {\cal L}_s^{n}) \right], \\
        \label{eq:conserv-fd-corr1}
          \sum_{q=1}^{N_{c_s}} {\bfa e}_{s}^{T}M_{s}^{n+1} {\bfa C}_{s}^{(q),n+1} = & \; \sum_{q=1}^{N_{c_s}} {\bfa e}_{s}^{T}M_{s}^{n} {\bfa C}_s^{(q),n} + \frac{\Delta t}{2} \sum_{q=1}^{N_{c_s}} \left[ {\bfa e}_{s}^{T}{\bfa F}_{c_s}^{(q),n+1}(\widetilde{{\cal C}}_{s}^{n+1}) + {\bfa e}_{s}^{T}{\bfa F}_{c_s}^{(q),n}({\cal C}_{s}^{n}) \right] \;- \nonumber \\
          & \;\frac{\Delta t}{2} \sum_{q=1}^{N_{c_s}} \left[ {\bfa e}_{s}^{T}{\bfa R}_{s}^{(q),n+1}({\cal C}^{n+1}, \widetilde{{\cal C}}_s^{n+1}, \widetilde{{\cal L}}_{s}^{n+1}) + {\bfa e}_{s}^{T}{\bfa R}_{s}^{(q),n}({\cal C}^n, {\cal C}_s^n, {\cal L}_{s}^n) \right].
        \end{align}
        Similar to the proof of Theorem \ref{thm:conserv-sd}, it can be shown that for any fixed time level $t^n$, $n = 0,\ldots,N_T$, we have
        \begin{subequations}
        \begin{align}
        \label{eq:conserv-fd-bulk-R}
          & \sum_{p=1}^{N_c}{\bfa e}^{T}{\bfa R}^{(p),n}({\cal C}^n, {\cal C}_{s}^n, {\cal L}_s^n) = \sum_{p=1}^{N_c} \int_{\Gamma_h^n}  \hat{r}^{(p),n}({\bfa c}_h^n, {\bfa c}_{s,h}^n, {\bfa l}_{s,h}^n) \;{\rm d}s, \\
        \label{eq:conserv-fd-surf-Rs}
          & \sum_{q=1}^{N_{c_s}} {\bfa e}_{s}^{T}{\bfa R}_{s}^{(q),n}({\cal C}^n, {\cal C}_s^n, {\cal L}_{s}^n) = \sum_{q=1}^{N_{c_s}} \int_{\Gamma_h^n} \hat{r}_s^{(q),n}({\bfa c}_h^n, {\bfa c}_{s,h}^n, {\bfa l}_{s,h}^n) \;{\rm d}s, \\
        \label{eq:conserv-fd-bulk-F}          
          & \sum_{p=1}^{N_c} {\bfa e}^{T}{\bfa F}_{c}^{(p),n}({\cal C}^n) = \sum_{p=1}^{N_c} \int_{\Omega_{h}^n} f_c^{(p),n}({\bfa c}_{h}^n) \;{\rm d}{\bfa x} = 0, \\
        \label{eq:conserv-fd-surf-Fs}
          & \sum_{q=1}^{N_{c_s}} {\bfa e}_{s}^{T}{\bfa F}_{c_s}^{(q),n}({\cal C}_{s}^n) = \sum_{q=1}^{N_{c_s}} \int_{\Gamma_{h}^n} f_{c_s}^{(q),n}({\bfa c}_{s,h}^n) \;{\rm d}s = 0,
        \end{align}
        \label{eq:conserv-fd-sourcebcs}
        \end{subequations}
        where \eqref{eq:conserv-fd-sources} has been used. Note that as a consequence of \eqref{eq:conserv-fd-interact}, we have, for any fixed time level $t^{n}$, $n = 0,\ldots,N_T$, that $\sum_{p} {\bfa e}^{T}{\bfa R}^{(p),n}({\cal C}^n, {\cal C}_{s}^n, {\cal L}_s^n) = \sum_{q} {\bfa e}_{s}^{T}{\bfa R}_{s}^{(q),n}({\cal C}^n, {\cal C}_s^n, {\cal L}_{s}^n)$. Adding \eqref{eq:conserv-fd-pred1} and \eqref{eq:conserv-fd-bulk1}, then applying \eqref{eq:conserv-fd-sourcebcs} and \eqref{eq:conserv-fd-interact} gives
        \begin{align}
          & \sum_{q=1}^{N_{c_s}} {\bfa e}_{s}^{T}M_{s}^{n+1} \widetilde{{\bfa C}}_{s}^{(q),n+1} + \sum_{p=1}^{N_c} {\bfa e}^{T}M_{\Omega}^{n+1} {\bfa C}^{(p),n+1} = \sum_{q=1}^{N_{c_s}} {\bfa e}_{s}^{T}M_s^n {\bfa C}_s^{(q),n} + \sum_{p=1}^{N_c} {\bfa e}^{T}M_{\Omega}^{n} {\bfa C}^{(p),n} \;+ \nonumber \\
          & \hspace{1cm} \frac{\Delta t}{2} \sum_{p=1}^{N_c} {\bfa e}^{T}{\bfa R}^{(p),n+1}({\cal C}^{n+1}, \widetilde{{\cal C}}_{s}^{n+1}, \widetilde{{\cal L}}_s^{n+1}) - \frac{\Delta t}{2} \sum_{q=1}^{N_{c_s}} {\bfa e}_{s}^{T}{\bfa R}_{s}^{(q),n}({\cal C}^n, {\cal C}_s^n, {\cal L}_{s}^n).
        \label{eq:conserv-fd-predbulk-sum}
        \end{align}
        Adding \eqref{eq:conserv-fd-corr1} and \eqref{eq:conserv-fd-predbulk-sum}, then applying \eqref{eq:conserv-fd-sourcebcs} and \eqref{eq:conserv-fd-interact} gives
        \begin{align}
          & \sum_{q=1}^{N_{c_s}} {\bfa e}_{s}^{T}M_{s}^{n+1} {\bfa C}_{s}^{(q),n+1} + \sum_{p=1}^{N_c} {\bfa e}^{T}M_{\Omega}^{n+1} {\bfa C}^{(p),n+1} = \sum_{q=1}^{N_{c_s}} {\bfa e}_{s}^{T}M_{s}^{n} {\bfa C}_s^{(q),n} + \sum_{p=1}^{N_c} {\bfa e}^{T}M_{\Omega}^{n} {\bfa C}^{(p),n} \;- \nonumber \\
          & \hspace{1cm} \underbrace{ \left[ \sum_{q=1}^{N_{c_s}} {\bfa e}_{s}^{T}M_{s}^{n+1} \widetilde{{\bfa C}}_{s}^{(q),n+1} - \sum_{q=1}^{N_{c_s}} {\bfa e}_{s}^{T}M_s^n {\bfa C}_s^{(q),n} + \Delta t \sum_{q=1}^{N_{c_s}} {\bfa e}_{s}^{T}{\bfa R}_{s}^{(q),n}({\cal C}^n, {\cal C}_s^n, {\cal L}_{s}^n) \right] }_{\displaystyle = 0},
        \label{eq:conserv-fd-predbulkcorr-sum}
        \end{align}
        where, from \eqref{eq:conserv-fd-pred1} and \eqref{eq:conserv-fd-surf-Fs}, the term in the square brackets in (\ref{eq:conserv-fd-predbulkcorr-sum}) is zero. Therefore, we can deduce that the predicted solution $\widetilde{{\cal C}}_{s}(t)$, ALE velocity and time step size $\Delta t$ play no role in global conservation. It is clear that \eqref{eq:conserv-fd-predbulkcorr-sum} holds for any $n = 0,\ldots,N_{T}$ and thus, the fully discrete system is globally conservative, as required.
        
      \end{proof}
      
      \begin{remark}
      \label{rem:conserv-fd-imex}
      	As mentioned in Remark \ref{rem:tdisc-imex}, the results presented in \S\ref{sec:nexp-cc} consider interior bulk species only where we employ an implicit-explicit backward Euler scheme for the temporal discretisation. Therefore, \eqref{eq:tdisc-int-bulk} can be written
      	\begin{align}
      		& \left\{ M_{\Omega}^{n+1} + \Delta t \left[ B_{\Omega}^{n+1}({\bfa w}_{h}^{n+1}; {\bfa u}_{\Omega}^{n+1}) - A_{\Gamma}^{n+1}({\bfa w}_{h}^{n+1}; {\bfa u}_{\Gamma}^{n+1}, {\bfa n}_{\Omega}^{n+1}) + K_{\Omega}^{n+1}(D_c^{(p)}) \right] \right\}{\bfa C}^{(p),n+1} \;= \nonumber \\
      		& \hspace{1cm} M_{\Omega}^{n}{\bfa C}^{(p),n} + \Delta t \left[ {\bfa R}^{(p),n}({\cal C}^{n}, {\cal C}_{s}^{n}, {\cal L}_s^{n}) + {\bfa F}_{c}^{(p),n}({\cal C}^{n}) \right].
    		\label{eq:conserv-fd-imex-bulk}
    		\end{align}
    		Global conservation of \eqref{eq:conserv-fd-imex-bulk} can then be obtained by following the steps in the proof of Theorem \ref{thm:conserv-fd}: apply Assumption \ref{ass:fd}; take column sums of \eqref{eq:conserv-fd-imex-bulk}; apply Lemmas \ref{lm:sd-stiffness} and \ref{lm:sd-advection}; sum over $p = 1,\ldots,N_c$; and, finally, apply \eqref{eq:conserv-fd-sources}.
      \end{remark}

  \section{Numerical Experiments}
  \label{sec:nexp}
  
  In the examples that follow, the conservation error is defined as the absolute difference between the total concentration at time levels $t^{n+1}$ and $t^{n}$; that is, following Theorem \ref{thm:conserv-fd}, the conservation error is given by
  \begin{align}
  	\left| \left(\sum_{p=1}^{N_c} {\bfa e}^{T}M_{\Omega}^{n+1}{\bfa C}^{(p),n+1} + \sum_{q=1}^{N_{c_s}} {\bfa e}_{s}^{T}M_{s}^{n+1}{\bfa C}_{s}^{(q),n+1}\right) - \left(\sum_{p=1}^{N_c} {\bfa e}^{T}M_{\Omega}^{n}{\bfa C}^{(p),n} + \sum_{q=1}^{N_{c_s}} {\bfa e}_{s}^{T}M_{s}^n{\bfa C}_{s}^{(q),n}\right) \right|. \nonumber
  \end{align}
  For the convergence studies presented in \S\ref{sec:nexp-cc-diffonly} and \S\ref{sec:nexp-cc-advecdiff}, the $L^{2}$ and $L^{\infty}$ norms are calculated as follows: for each $K \in {\cal T}_h^{{\cal D}}(t)$, we calculate the $N_g$ Gaussian quadrature points $\{{\bfa x}_a\}_{a = 1}^{N_g}$; their associated weights $\{w_a\}_{a=1}^{N_g}$; and the error between the computational and exact solutions $\{e_a\}_{a=1}^{N_g}$, such that 
  \[
  	e_a := c_h^{(1)}({\bfa x}_a,t) - c^{(1)}({\bfa x}_a,t), \qquad a = 1,\ldots,N_g.
  \]
  The $L^{2}$ norm is then approximated using Gaussian quadrature
  \begin{align}
  	\| e \|_{L^{2}} \approx \left( \sum_{K \in {\cal T}_h^{{\cal D}}(t)} \sum_{a = 1}^{N_g} w_a | e_a |^2 \right)^{1/2}, \nonumber
  \end{align}
  and the $L^{\infty}$ norm is approximated by
  \begin{align}
  	\| e \|_{L^{\infty}} \approx \max_{K \in {\cal T}_h^{{\cal D}}(t)} \left( \max_{a = 1,\ldots,N_g} | e_a | \right). \nonumber
  \end{align}

    \subsection{Convergence and conservation}
    \label{sec:nexp-cc}
    
      \subsubsection{Diffusion in a moving unit circle}
      \label{sec:nexp-cc-diffonly}
        In this example, we do not consider an exterior environment. Therefore, $N_l = 0$ and $N_{l_s} = 0$. We consider diffusion of a single interior bulk species $(N_c = 1)$ and no surface species $(N_{c_s} = 0)$ in a moving circular domain. We also assume that the bulk reaction term in \eqref{eq:rd-int-bulk-eq} is zero and employ zero-flux boundary conditions in \eqref{eq:rd-int-bulk-bcs}; that is,
        \begin{equation}
        		f_c^{(1)}({\bfa c}) = f_c^{(1)}(c^{(1)}) = 0, \quad r^{(1)}({\bfa c}) = r^{(1)}(c^{(1)}) = 0, \quad \mbox{and} \quad \hat{r}^{(1)}({\bfa c}, {\bfa c}_s, {\bfa l}_s) = 0. \nonumber
        \end{equation}
        Following Novak and Slepchenko \cite{novak_2014}, we assume that the bulk domain $\Omega(t)$ is a unit circle moving with a constant velocity ${\bfa u}_{b} = (1,0)$. We also assume that the boundary $\Gamma(t)$ evolves with the same constant velocity ${\bfa u}_{\Gamma} = {\bfa u}_{b}$. However, the material velocity of the bulk domain ${\bfa u}_{\Omega} = {\bfa 0}$. With the diffusion coefficient $D_c^{(1)} = 1$,  this problem has an exact solution given by 
	\begin{equation}
	  c^{(1)}(x,y,t) = c_{0}\exp{ \left[ -(x - t) \right] }, \quad\quad ({\bfa x}, t) \in \Omega(t)\times \overline{I}, \nonumber
	\end{equation}
	where $c_{0} \in \RR$. In \cite{novak_2014}, the constant value $c_{0}$ is calculated on the stationary box
	\[
	  \Omega = \left\{ (x,y) \colon |x| \leq 1, |y| \leq 1 \right\},  
	\]
	and is given by
	\[
	  c_{0} = 4\left[ \iint_{\Omega} \exp{(-x)}\; {\rm d}x {\rm d}y \right]^{-1} = \left[ \sinh(1) \right]^{-1}.
	\]
	For the moving circular domain, a value of $c_{0}$ is not given in \cite{novak_2014} and therefore, here we assume the same value for $c_{0}$ as for the stationary box. In the results which follow, we set the final time $T = 0.5$, the time step size $\Delta t = 5\times 10^{-4}$ (so that $N_T = 1000$) and initial condition $c^{(1)}(x,y,0) = c_{0}\exp{(-x)}$. The initial mesh contains $N_e^{\Omega} = 1499$ triangles and ${\cal N}_s^{\Gamma} = 87$ boundary nodes. The boundary nodes are translated with the velocity ${\bfa u}_{b}$ and these are then employed as fixed Dirichlet boundary conditions for the MMPDE system with monitor matrix $G = {\bb I}$, where ${\bb I}$ is the identity matrix. Figures \ref{fig:nexp-cc-diff}\subref{fig:nexp-cc-diff-sol} and \ref{fig:nexp-cc-diff}\subref{fig:nexp-cc-diff-cut} show the computed solution at $T=0.5$ and a comparison between the computed and exact solution along the cut $y = 0$, respectively. The computed solution is clearly in very good agreement with the exact solution. Figure \ref{fig:nexp-cc-diff}\subref{fig:nexp-cc-diff-conserv} confirms that the fully discrete numerical solution is globally conservative to machine zero. The rates of spatial convergence in the $L^{2}$ and $L^{\infty}$ norms are shown in Fig. \ref{fig:nexp-cc-diff}\subref{fig:nexp-cc-diff-L2}. Clearly second-order convergence is demonstrated in both norms. The rate of convergence is an improvement over the convergence rates obtained using the method proposed in \cite{novak_2014}, where the convergence rates were between $1$ and $2$. In \cite{novak_2014}, the impaired rates of convergence are attributed to a reinitialisation procedure of the unknowns at the boundary. Due to the fitted nature of the evolving mesh considered in this article, we do not require such a procedure and therefore, we obtain the expected rates of convergence.
	
	\begin{figure}[ht!]
        \centering
	
	  \subfloat[Computed solution and initial position of domain.]{%
	    \label{fig:nexp-cc-diff-sol}%
	    \inc{ns/ns_diff_int-blk_1.png}{0.52\textwidth}}\quad	    
	  \subfloat[Numerical and exact solutions along $y = 0$.]{%
	    \label{fig:nexp-cc-diff-cut}%
	    \inc{ns/ns_diff_conc-cut_fewerpts.pdf}{0.44\textwidth}}\\
	  \subfloat[Global fully discrete conservation.]{%
	    \label{fig:nexp-cc-diff-conserv}%
	    \inc{ns/ns_diff_int-conserv_1.pdf}{0.53\textwidth}}\quad
	  \subfloat[$L^{2}$ and $L^{\infty}$ convergence.]{%
	    \label{fig:nexp-cc-diff-L2}%
	    \inc{ns/ns_diff_conv.pdf}{0.43\textwidth}}
	  \caption[]{%
	    {\em (Diffusion in a moving unit circle)} \subref{fig:nexp-cc-diff-sol} Contour plot of the computed solution at $T = 0.5$ (initial configuration is given by the solid black line); \subref{fig:nexp-cc-diff-cut} Comparison of the computed and exact solution along a cut at $y = 0$; \subref{fig:nexp-cc-diff-conserv} Conservation error as a function of time; \subref{fig:nexp-cc-diff-L2} Spatial convergence in the $L^{2}$ and $L^{\infty}$ norms.}
	  \label{fig:nexp-cc-diff}
	\end{figure}

      \subsubsection{Advection-diffusion in a moving unit circle}
      \label{sec:nexp-cc-advecdiff}
        As an extension to the previous example, we consider the advection-diffusion of a chemical species in a moving circular domain. Once again, this problem has been considered by Novak and Slepchenko \cite{novak_2014} and has an exact solution. We consider only a single interior bulk species $(N_c = 1)$, no surface species $(N_{c_s} = 0)$ and no exterior environment so that $N_l = 0$ and $N_{l_s} = 0$. We also assume that the bulk reaction term in \eqref{eq:rd-int-bulk-eq} is zero and employ zero-flux boundary conditions in \eqref{eq:rd-int-bulk-bcs}, as before.
        
        Once again, we assume that the bulk domain $\Omega(t)$ is a unit circle moving with a constant velocity ${\bfa u}_{b} = (1,0)$ and that the boundary $\Gamma(t)$ evolves with the same constant velocity ${\bfa u}_{\Gamma} = {\bfa u}_{b}$. However, contrary to the previous example, the material velocity of the bulk domain ${\bfa u}_{\Omega} = {\bfa u}_{b}$. The exact solution is then given by \cite{novak_2014}
        \begin{equation}
          c^{(1)}(x,y,t) = \exp{ \left(-\lambda^{2}D_c^{(1)}t\right) }\left[ \frac{x - t}{ |{\bfa x} - {\bfa u}_{b}t| } \right] J_{1}\left(\lambda |{\bfa x} - {\bfa u}_{b}t| \right) + J_{1}(\lambda), \nonumber
        \end{equation}
        where $\lambda = 1.841183781340659$, $J_{1}(z)$ is the first-order Bessel function of the first kind and $|{\bfa x}|$ denotes the $l_{2}$ norm. In the results which follow, we set the diffusion coefficient $D_c^{(1)} = 1/4$, the final time $T = 0.2$, time step size $\Delta t = 2\times 10^{-5}$ (so that $N_T = 10000$) and initial condition
        \[
          c^{(1)}(x,y,0) = \frac{x}{|{\bfa x}|} J_{1}\left(\lambda |{\bfa x}| \right) + J_{1}(\lambda).
        \]
        The same meshes were used as in the previous example. Figures \ref{fig:nexp-cc-advecdiff}\subref{fig:nexp-cc-advecdiff-sol} and \ref{fig:nexp-cc-advecdiff}\subref{fig:nexp-cc-advecdiff-cut} illustrate the computed solution at the end of the simulation and a comparison between the computed and exact solution along the cut $y = 0$, respectively. The computed solution clearly is in very good agreement with the exact solution. Figure \ref{fig:nexp-cc-advecdiff}\subref{fig:nexp-cc-advecdiff-L2} illustrates the spatial convergence in the $L^{2}$ and $L^{\infty}$ norms. Clearly second-order convergence can be seen in both norms. This is an improvement over the convergence rates shown in \cite{novak_2014}, where less than second-order convergence was seen for the $L^{\infty}$ norm. Finally, Fig. \ref{fig:nexp-cc-advecdiff}\subref{fig:nexp-cc-advecdiff-conserv} confirms that the fully discrete numerical solution is globally conservative to machine zero.
	
	\begin{figure}[ht!]	
	  \subfloat[Computed solution and initial position of domain.]{%
	    \label{fig:nexp-cc-advecdiff-sol}%
	    \inc{ns/nsadvec_int-blk_1.png}{0.49\textwidth}}\quad
	  \subfloat[Numerical and exact solution along $y = 0$.]{%
	    \label{fig:nexp-cc-advecdiff-cut}%
	    \inc{ns/nsadvec_advecdiff_conc-cut_fewerpts.pdf}{0.46\textwidth}}\\
	  \subfloat[Global fully discrete conservation.]{%
	    \label{fig:nexp-cc-advecdiff-conserv}%
	    \inc{ns/nsadvec_int-conserv_1.pdf}{0.53\textwidth}}\quad
	  \subfloat[$L^{2}$ and $L^{\infty}$ convergence.]{%
	    \label{fig:nexp-cc-advecdiff-L2}%
	    \inc{ns/nsadvec_advecdiff_conv.pdf}{0.43\textwidth}}
	  \caption[]{%
	    {\em (Advection-diffusion in a moving unit circle)} \subref{fig:nexp-cc-advecdiff-sol} Contour plot of the computed solution at $T = 0.2$ (initial configuration is given by the solid black line); \subref{fig:nexp-cc-advecdiff-cut} Comparison of the computed and exact solution along a cut at $y = 0$; \subref{fig:nexp-cc-advecdiff-conserv} Conservation error as a function of time; \subref{fig:nexp-cc-advecdiff-L2} Spatial convergence in the $L^{2}$ and $L^{\infty}$ norms.}
	  \label{fig:nexp-cc-advecdiff}
	\end{figure}

      \subsubsection{Advection-reaction-diffusion in a moving complex geometry}
      \label{sec:nexp-cc-stry}
        In this section, we consider an example presented by Strychalski \etal \cite{strychalski_2010-1, strychalski_2010} where the initial domain is given by
        \begin{equation}
          \Omega_{0} := \left \{ {\bfa x}=(x,y) : \left| {\bfa x} - {\bfa x}_{c} \right| \leq 0.234 - 0.0702\sin{\left ( 4 \tan^{-1}{\left( y / x \right)} \right )} \right \}, \nonumber
        \end{equation}
        and ${\bfa x}_{c} = (0.5,0.5)$. This domain is advected with a constant velocity ${\bfa u}_{b} = (0.1, 0.1)$ and therefore, the boundary $\Gamma(t)$ evolves with the same constant velocity ${\bfa u}_{\Gamma} = {\bfa u}_{b}$. Once again, we do not consider an exterior environment so that $N_l = 0$ and $N_{l_s} = 0$ and assume that there are no surface species ($N_{c_s} = 0$). However, contrary to the previous examples, we consider two interacting interior bulk species ($N_c = 2$). We also assume that the material velocity of the bulk domain ${\bfa u}_{\Omega} = {\bfa u}_{b}$. Following \cite{strychalski_2010}, the bulk reaction in \eqref{eq:rd-int-bulk-eq} and flux boundary conditions in \eqref{eq:rd-int-bulk-bcs} are given by
        \begin{alignat}{3}
        \label{eq:nexp-cca-stry1}
          & f_c^{(1)}({\bfa c}) = f_c^{(1)}(c^{(2)}) = \frac{k_{2} c^{(2)}}{K_{m2} + c^{(2)}}, & \qquad \mbox{and} & \qquad
	      r^{(1)}({\bfa c}) = r^{(1)}(c^{(1)}) = \frac{k_{1} S c^{(1)}}{K_{m1} + c^{(1)}}, \\
	    \label{eq:nexp-cca-stry2}
	      & f_c^{(2)}({\bfa c}) = f_c^{(2)}(c^{(2)}) = -\frac{k_{2} c^{(2)}}{K_{m2} + c^{(2)}}, & \qquad \mbox{and} & \qquad
	      r^{(2)}({\bfa c}) = r^{(2)}(c^{(1)}) = -\frac{k_{1} S c^{(1)}}{K_{m1} + c^{(1)}},
	    \end{alignat}
	    where $k_{1}$ and $k_{2}$ are maximum activation and deactivation rates, respectively; and $K_{m1}$ and $K_{m2}$ are Michaelis-Menten constants. It is clear that (\ref{eq:nexp-cca-stry1}) and (\ref{eq:nexp-cca-stry2}) satisfy the conditions \eqref{eq:conserv-cont-sources} of Theorem \ref{thm:conserv-cont} and therefore, in light of Remarks \ref{rem:tdisc-imex} and \ref{rem:conserv-fd-imex}, we expect this system to be globally conservative. In this example, $c^{(1)}$ and $c^{(2)}$ denote the concentrations of inactive and active protein conformations, respectively. Initially, the inactive and active concentrations are $c^{(1)}({\bfa x}, 0) = 1$ and $c^{(2)}({\bfa x}, 0) = 0$, respectively. Due to the complex geometry, this example does not possess a known exact solution. Following \cite{strychalski_2010} we set the final time $T = 0.4$, the time step size $\Delta t = 1.25\times 10^{-3}$ (so that $N_T = 320$) and choose a uniform mesh such that
	    \[
	    		h_K = 4 \Delta t, \quad \forall K \in {\cal T}_{h,c}^{{\cal D}},
	    \]
	    where $h_K$ denotes the mesh width of element $K$ of the triangulation ${\cal T}_{h,c}^{{\cal D}}$. Thus, the initial mesh contains $N_e^{\Omega} = 18659$ triangles and ${\cal N}_s^{\Gamma} = 419$ boundary nodes. Analagous to \S\ref{sec:nexp-cc-diffonly} and \S\ref{sec:nexp-cc-advecdiff}, the boundary nodes are translated with the velocity ${\bfa u}_{b}$ and then employed as fixed Dirichlet boundary conditions for the MMPDE system with monitor matrix $G = {\bb I}$, where ${\bb I}$ is the identity matrix. For illustrative purposes, a coarser mesh is depicted in Fig. \ref{fig:nexp-cc-stry}\subref{fig:nexp-cc-stry-a}. We set the parameters $D_c^{(1)} = D_c^{(2)} = 0.1$, $k_{1} = k_{2} = 1.0$, $K_{m1} = K_{m2} = 0.2$ and $S = 1.0$, as in \cite{strychalski_2010}.
	    
	    Figures \ref{fig:nexp-cc-stry}\subref{fig:nexp-cc-stry-b}-\subref{fig:nexp-cc-stry-d} illustrate the concentration of the active species at times $t = 0.15, 0.3, 0.4$, respectively. Once again, the solid black line indicates the initial configuration. To plotting accuracy our results agree with those presented in \cite{strychalski_2010}. Figure \ref{fig:nexp-cc-stry}\subref{fig:nexp-cc-stry-conserv} illustrates that the error in global conservation of the fully discrete numerical solution is machine zero. This is an improvement over the results presented in \cite{strychalski_2010}, where the conservation error was of the order $10^{-5}$ to $10^{-6}$.
	
	\begin{figure}
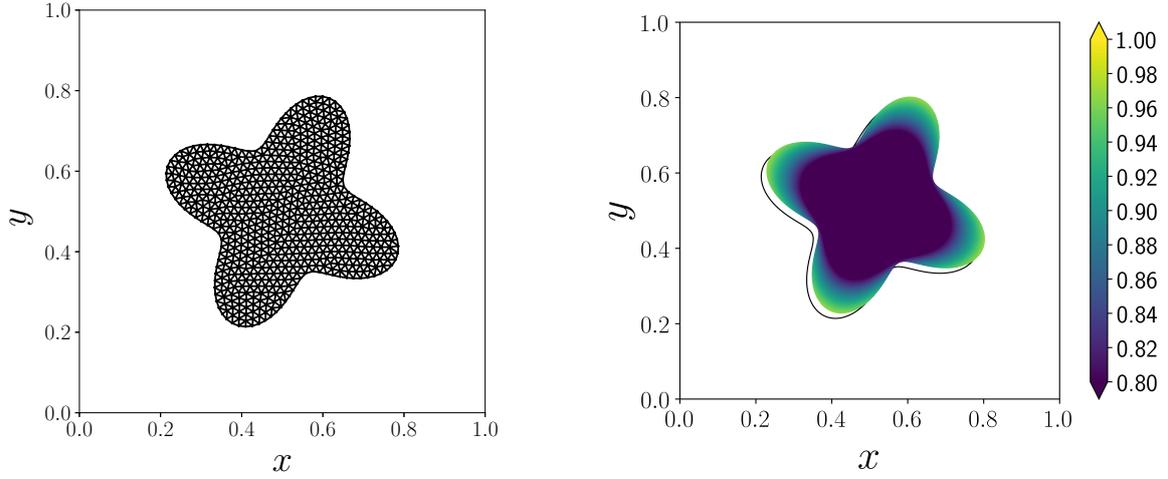
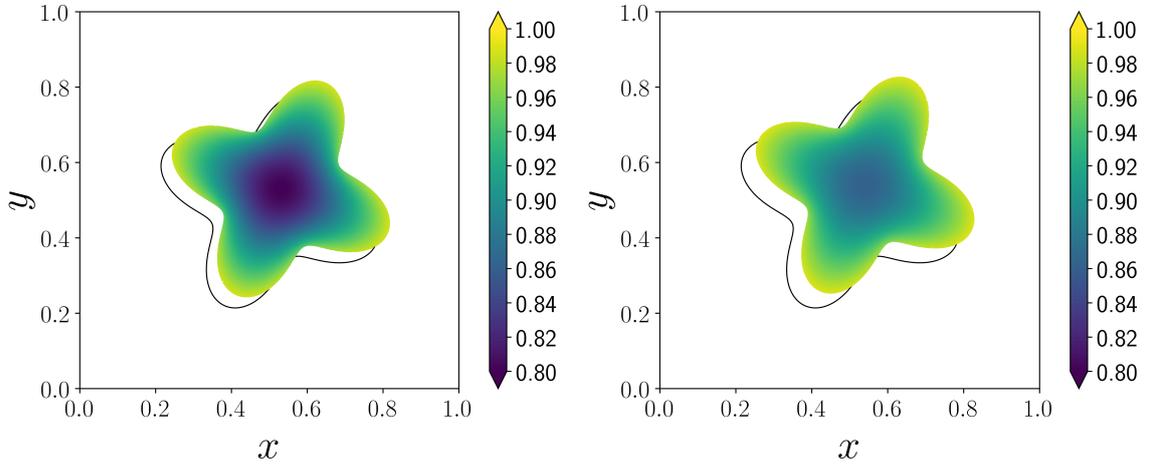
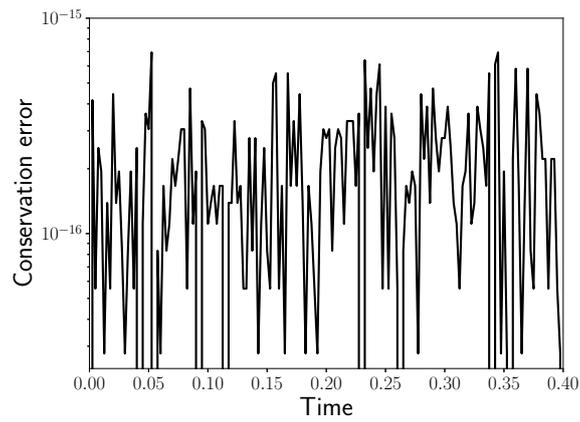

	  \centering
	  \subfloat[Illustration of the initial mesh.]{%
	    \label{fig:nexp-cc-stry-a}%
	    \inc{stry/stry_mesh_1.pdf}{0.42\textwidth}}\qquad\quad
	  \subfloat[Time $= 0.15$.]{%
	    \label{fig:nexp-cc-stry-b}%
	    \inc{stry/stry_int-blk_2.png}{0.48\textwidth}}\\	    
	  \subfloat[Time $= 0.3$.]{%
	    \label{fig:nexp-cc-stry-c}%
	    \inc{stry/stry_int-blk_3.png}{0.48\textwidth}}		    
	  \subfloat[Time $= 0.4$.]{%
	    \label{fig:nexp-cc-stry-d}%
	    \inc{stry/stry_int-blk_4.png}{0.48\textwidth}}\\		    
	  \subfloat[Global fully discrete conservation.]{%
	    \label{fig:nexp-cc-stry-conserv}%
	    \inc{stry/stry_int-conserv_1.pdf}{0.48\textwidth}}	    
	  \caption[]{%
	    {\em (Advection-reaction-diffusion in a moving complex geometry)} \subref{fig:nexp-cc-stry-a} Coarser illustration of the initial mesh with $N_e^{\Omega} = 1225$ triangles and ${\cal N}_s^{\Gamma} = 111$ boundary nodes; Concentration of the active species at various points in time: \subref{fig:nexp-cc-stry-b} Time $= 0.15$, \subref{fig:nexp-cc-stry-c} Time $= 0.3$, \subref{fig:nexp-cc-stry-d} Time $= 0.4$; \subref{fig:nexp-cc-stry-conserv} Conservation error as a function of time.}
	  \label{fig:nexp-cc-stry}
	\end{figure}
      
    \subsection{Application to directed cell migration}
    \label{sec:nexp-cell}    	
    	  As discussed in \S\ref{sec:intro}, Rho GTPase activity is of key importance in directed cell migration and mass-conservative reaction-diffusion (McRD) equations are an important mathematical framework for their study. Therefore, unsurprisingly, the modelling of Rho GTPase activity has been the focus of many studies; see e.g. \cite{sakumura_2005, maree_2006, jilkine_2007, otsuji_2007, goryachev_2008, rappel_2017}. In resting cells the inactive form of the Rho GTPase is predominate. Some of the inactive form will be cytoplasmic, sequestered by GDI, and some will be membrane bound. The activation of the inactive form (in response to an upstream signal, for example) is facilitated by GDP/GTP exchange factors (GEFs), which stimulate the dissociation of GDP from the inactive form. This process cannot take place within the cytoplasm due to the inactive form being sequestered by GDI. Consequently, activation can only take place on the membrane and therefore, in order for sustainable activation to be seen, the dissociation of GDI is essential. Additionally, the exchange factors (GEFs) are cytoplasmic and therefore, must be recruited to the membrane so that activation can take place. Hydrolysis of GTP to GDP is stimulated by GTPase activating proteins (GAPs) and is responsible for the inactivation of the active form.
    	  
    	  Mar\'{e}e \textit{et al.} \cite{maree_2006} considered a multiscale model of a motile cell, where the Rho GTPase activity was coupled to the cytoskeletal dynamics and motion of the cell using the Cellular Potts Model \cite{glazier_1993}. The GTPase activity demonstrated spatial bistability and, crucially, stable polarisation could be seen outside of the bistable region, meaning that a stable resting cell would be possible. The onset of polarisation was then studied in greater detail in \cite{jilkine_2007}, where cooperativity between the interacting Rho GTPases and the fast diffusion of the GDI bound inactive form were seen to be essential in developing robust polarity. The models proposed in \cite{maree_2006, jilkine_2007} were analytically intractable due to their complexity. Therefore, building upon the previous work of \cite{maree_2006, jilkine_2007}, Mori \textit{et al.} \cite{mori_2008, mori_2011} introduced a minimalistic model for Rho GTPase activity by considering only a single Rho GTPase which cycles between an inactive (cytoplasmic) and active (membrane bound) state. The model, also known as the wave-pinning (WP) model, consists of two time-dependent reaction-diffusion equations in one spatial dimension and is given by
    	\begin{subequations}
    		\begin{align}
    			\ptl{a}{t} &= D_{a} \ptld{a}{x} + f(a,b), \qquad (x,t) \in [0,L]\times \overline{I}, \nonumber\\
    			\ptl{b}{t} &= D_{b} \ptld{b}{x} - f(a,b), \nonumber
    		\end{align}
    	\end{subequations}
    	where $a$, $b$ denote the active and inactive states, respectively and $L \in \RR$. The reaction kinetics are given by
    	\begin{equation}
    		f(a,b) = \left( k_{0} + \frac{\gamma a^{2}}{K^{2} + a^{2}} \right)b - \delta a + f_s, \nonumber
    	\end{equation}
    	where $k_{0}, \gamma, K, \delta \in \RR$ are model parameters and $f_s$ is an external stimulus. The external stimulus is assumed to increase the rate of activation. Under the influence of an external stimulus, this model sets up a travelling wave in the active state which is later halted (pinned) due to the interaction with the inactive state. Fundamental assumptions at the heart of this model are the properties of mass conservation and also the large differences in the active and inactive diffusivities. Vanderlei \etal \cite{vanderlei_2011} later extended this model to two spatial dimensions, incorporated a mechanical model for the membrane and simulated its movement using the immersed boundary method \cite{peskin_1972}. Note that following the discussion above, a consequence of this model is that the dissociation of GDI and the action of GEFs to produce the active form, are incorporated into a single step. Also note that the GEFs are assumed to be freely available.
    	
    	In \cite{maree_2006, jilkine_2007, mori_2008, vanderlei_2011}, the active and inactive forms are intermixed and therefore, occupy the same physical domain. One could arrive at this scenario by considering a top-down view of a cell, essentially flattening a three-dimensional cell into two dimensions. However, as discussed above, the active and inactive (without GDI) forms are membrane bound and therefore, occupy a physically distinct region to the inactive GDI bound state. This compartmentalisation is a key biological component of cell polarisation. Bulk-surface implementations of the wave-pinning model can be found in the literature (see e.g. \cite{ramirez_2015, giese_2015, diegmiller_2018, cusseddu_2019}), where the domain is assumed to be stationary. Although the original formulation of the wave-pinning model has been applied on moving domains (see e.g. \cite{maree_2006, vanderlei_2011, holmes_2012-1}), we are not currently aware of any studies involving the bulk-surface wave-pinning model on moving domains. 
    	
    	Section \ref{sec:bswp-stationary} illustrates the mass-conservative properties of the bulk-surface wave-pinning model on a stationary circular domain. Section \ref{sec:bswp-moving} then considers the bulk-surface wave-pinning model on a moving domain, where the motion of the cell membrane is driven by the level of activation and the stimulation is dependent on an extracellular ligand field and membrane bound ligand-receptor concentration.
    	
    	\subsubsection{Bulk-surface wave-pinning on a stationary circular domain}
    	\label{sec:bswp-stationary}
    		Before we consider the bulk-surface wave-pinning model on a moving domain, we illustrate the mass conserving properties of our algorithm on a stationary, circular domain \cite{cusseddu_2019}. Similar to the previous examples, we assume that there is no exterior environment so that $N_l = 0$ and $N_{l_s} = 0$. The initial domain $\Omega(0)$ is assumed to be a circle of radius $R_{0}$ centred at the origin. As the domain is stationary, $\Omega(t) = \Omega(0)$, $\forall t \in \overline{I}$, and the material velocities ${\bfa u}_{\Omega} = {\bfa u}_{\Gamma} = {\bfa 0}$. In this example, there is a single bulk species $(N_c = 1)$ corresponding to the inactive state and a single surface species $(N_{c_s} = 1)$ corresponding to the active state. Therefore, in \eqref{eq:rd-int-bulk} $f_c^{(1)}({\bfa c}) = 0$ and $r^{(1)}({\bfa c}) = 0$, and in \eqref{eq:rd-int-surf} $f_{c_s}^{(1)}({\bfa c}_{s}) = 0$. The interaction between the bulk and surface species is given by the reaction-kinetics
	    	\[
	    	  \hat{r}_{s}^{(1)}({\bfa c}, {\bfa c}_{s}) = \hat{r}^{(1)}({\bfa c}, {\bfa c}_{s}) = \omega\left( k_{0} + \frac{\gamma \left(c_s^{(1)}\right)^{2}}{K^{2} + \left(c_s^{(1)}\right)^{2}} \right)c^{(1)} - \delta c_s^{(1)},
	   	\]
	   	where $\omega \in \RR$ is a length scaling parameter. It is clear that $\hat{r}^{(1)}$ and $\hat{r}_s^{(1)}$ given above satisfy the conditions \eqref{eq:conserv-cont-interact} of Theorem \ref{thm:conserv-cont} and therefore, we expect the system to be globally conservative. Note that in this example, we do not consider an external stimulus; that is, $f_s = 0$. Following \cite{cusseddu_2019, ratz_2012} we assume that $\omega$  is defined as the ratio between the bulk volume/area and surface area/length; that is, $\omega = |\Omega(0)| / |\Gamma(0)|$. Following \cite{cusseddu_2019}, the initial conditions are given by
	   		\begin{align}
	   			c^{(1)}_{s}({\bfa x},0) &= s_{l} + (s_{h} - s_{l})\exp{\left[-\left( \frac{\theta^{2}}{2\sg} \right)\right]}, \nonumber \\
	   			c^{(1)}({\bfa x},0) &= c^{(1)}_{0}, \nonumber
	   		\end{align}
	   		where the (dimensional) parameters are detailed in Table \ref{tab:bswp-stationary-params} and $\theta$ denotes the angle measured anti-clockwise from the positive $x$-axis. The parameters are the same as used in \cite{mori_2008,cusseddu_2019} and the concentrations $s_{l}$, $s_h$ and $c^{(1)}_0$ are chosen so that wave-pinning occurs \cite{mori_2008,cusseddu_2019}. We set the final time $T = 6000$ and time step size $\Delta t = 0.1$ (so that $N_T = 60000$). The initial mesh is uniform and contains $N_e^{\Omega} = 2097$ triangles and ${\cal N}_s^{\Gamma} = 103$ boundary nodes.
	   		
	   		\begin{table}[ht]
	   			\centering
	   			
	   			\renewcommand{\arraystretch}{1.5}
	   			\begin{tabular}{| c | c | c || c | c | c || c | c | c |}
	   				\hline
	   				Parameter & Value & Units & Parameter & Value & Units & Parameter & Value & Units \\ \hline\hline
					$R_0$ & $5.0$ & $\mu$m & $k_{0}$ & 0.067 & $s^{-1}$ & $\gamma$ & $1.0$ & $s^{-1}$ \\ \hline
					$K$ & $1.0$ & mol $\mu$m$^{-2}$ & $\delta$ & $1.0$ & $s^{-1}$ & $s_l$ & $0.2805$ & mol $\mu$m$^{-2}$ \\ \hline
					$s_h$ & $1.5491$  & mol $\mu$m$^{-2}$ & $\sigma$ & 0.005 & - & $c^{(1)}_0$ & 0.4009 & mol $\mu$m$^{-3}$ \\ \hline
					$D_{c_s}^{(1)}$	& 0.01 & $\mu$m$^{2}$ $s^{-1}$ & $D_c^{(1)}$ & $10.0$ & $\mu$m$^{2}$ $s^{-1}$ \\ \cline{1-6}
	   			\end{tabular}
	   			\caption{%
	   				Dimensional bulk-surface wave-pinning parameters on a stationary domain.}
				\label{tab:bswp-stationary-params}
	   		\end{table}
	   
	   		The concentration of the active surface state as a function of the scaled arc-length around the circular cell and the inactive cytoplasmic state are illustrated at times $t = 5, 45, 105, 145$ in Fig. \ref{fig:nexp-cell-wp-wpwp}. Clearly when $t = 5$, the surface active state displays a sharp Gaussian-like shape centred around $\theta = 0$ (top image of Fig. \ref{fig:nexp-cell-wp-wpwp}\subref{fig:nexp-cell-wp-wpwp-sf-a}) which is inherited from the initial condition. Correspondingly, the cytoplasmic inactive state locally depletes around the highest membane-bound active state concentration (bottom image of Fig. \ref{fig:nexp-cell-wp-wpwp}\subref{fig:nexp-cell-wp-wpwp-sf-a}). The peak in the membrane bound activated state flattens, and therefore broadens, as time progresses (see e.g. top image of Fig. \ref{fig:nexp-cell-wp-wpwp}\subref{fig:nexp-cell-wp-wpwp-sf-b}) due to the wave of activation travelling around the membrane. Correspondingly, two locally depleted regions just inside the active wave-front can be seen in the cytoplasmic inactive state (see e.g. bottom image of Fig. \ref{fig:nexp-cell-wp-wpwp}\subref{fig:nexp-cell-wp-wpwp-sf-b}). This process continues to later times (see e.g. Fig. \ref{fig:nexp-cell-wp-wpwp}\subref{fig:nexp-cell-wp-wpwp-sf-d}) until eventually the wave is pinned, the process terminates and (assuming no external stimulus) a stable pattern is formed. The results depicted in Fig. \ref{fig:nexp-cell-wp-wpwp} are in excellent qualitative agreement with those presented in \cite{cusseddu_2019}. Note that in Figs. \ref{fig:nexp-cell-wp-wpwp}\subref{fig:nexp-cell-wp-wpwp-sf-a}-\subref{fig:nexp-cell-wp-wpwp-sf-d} the colourbar axis changes as time progresses. Due to the high diffusivity inside the cell, the inactive cytoplasmic species is approximately constant and therefore, changing the colourbar axis as time progresses emphasizes the local pattern of the cytoplasmic species. Figure \ref{fig:nexp-cell-wp-wpwp-csv}\subref{fig:nexp-cell-wp-wpwp-conserv} confirms that the fully discrete numerical solution is globally conservative to machine-zero. The drop off in conservation error as time progresses is most likely caused by the travelling wave of activation halting as it becomes pinned.
	    	
	    	\begin{figure}
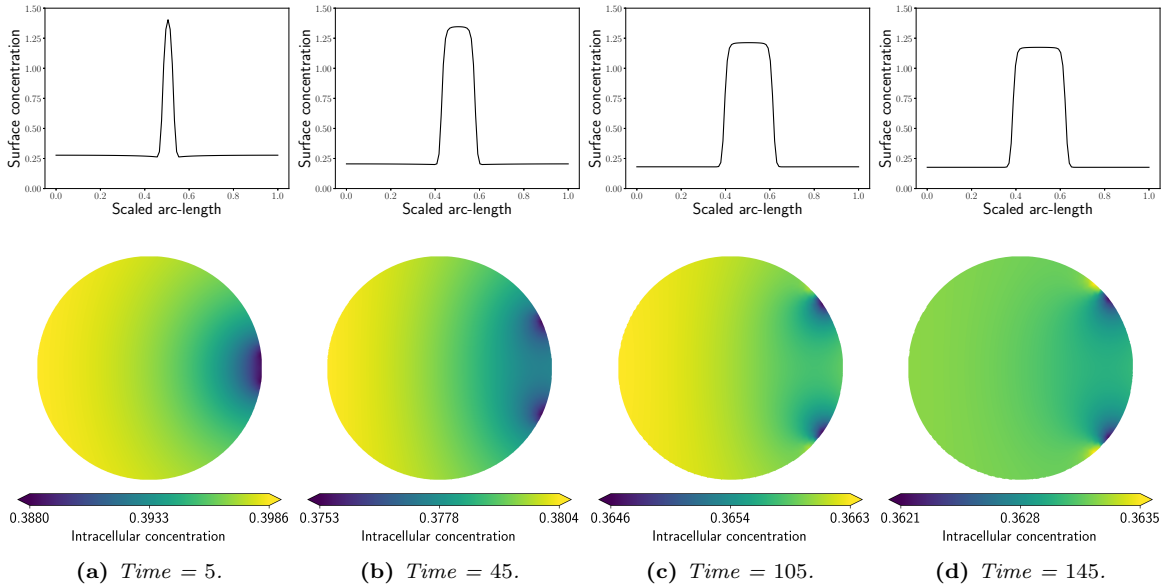

		  		\centering
				\subfloat{%
				    \label{fig:nexp-cell-wp-wpwp-sf-a}%
    				    \inc{wp/wp_int-sf_2.pdf}{0.24\textwidth}}
				\subfloat{%
				    \label{fig:nexp-cell-wp-wpwp-sf-b}%
				    \inc{wp/wp_int-sf_4.pdf}{0.24\textwidth}}
				\subfloat{%
				    \label{fig:nexp-cell-wp-wpwp-sf-c}%
				    \inc{wp/wp_int-sf_7.pdf}{0.24\textwidth}}
				\subfloat{%
				    \label{fig:nexp-cell-wp-wpwp-sf-d}%
				    \inc{wp/wp_int-sf_9.pdf}{0.24\textwidth}}
				\setcounter{subfigure}{0}
				\subfloat[Time = $5$.]{%
				    \label{fig:nexp-cell-wp-wpwp-a}%
				    \inc{wp/wp-wp_int-blk_2.png}{0.24\textwidth}}
				\subfloat[Time = $45$.]{%
				    \label{fig:nexp-cell-wp-wpwp-b}%
				    \inc{wp/wp-wp_int-blk_4.png}{0.24\textwidth}}
				\subfloat[Time = $105$.]{%
				    \label{fig:nexp-cell-wp-wpwp-2-c}%
				    \inc{wp/wp-wp_int-blk_7.png}{0.24\textwidth}}
				\subfloat[Time = $145$.]{%
				    \label{fig:nexp-cell-wp-wpwp-2-d}%
				    \inc{wp/wp-wp_int-blk_9.png}{0.24\textwidth}}
				\caption[]{%
					{\em (Bulk-surface wave-pinning on a stationary domain)} Concentrations of membrane bound active state (top row) and cytoplasmic inactive state (bottom row) on a stationary, circular domain at times \subref{fig:nexp-cell-wp-wpwp-a} $t = 5$, \subref{fig:nexp-cell-wp-wpwp-b} $t = 45$, \subref{fig:nexp-cell-wp-wpwp-2-c} $t = 105$, \subref{fig:nexp-cell-wp-wpwp-2-d} $t = 145$.}
				\label{fig:nexp-cell-wp-wpwp}
			\end{figure}
			
			\begin{figure}
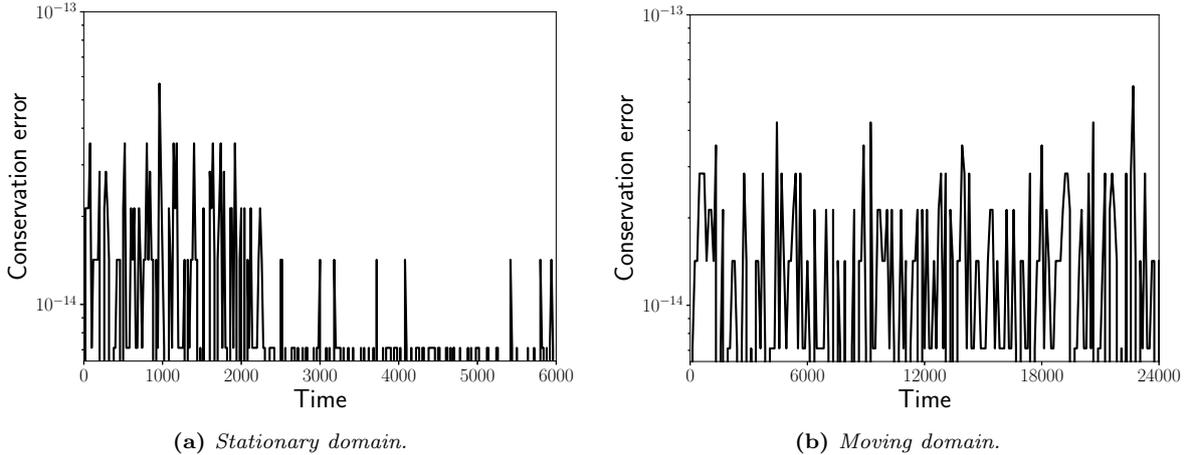

	  			\centering
	  			\subfloat[Stationary domain.]{%
		    			\label{fig:nexp-cell-wp-wpwp-conserv}%
		    			\inc{wp/wp_int-conserv_1.pdf}{0.48\textwidth}}\quad
		    		\subfloat[Moving domain.]{%
		    			\label{fig:nexp-cell-wp-wplr-conserv}%
		    			\inc{wplr/wp-lr_int-conserv_1.pdf}{0.48\textwidth}}
		    		\caption[]{%
	    				Global fully discrete conservation error as a function of time for the bulk-surface wave-pinning model on a \subref{fig:nexp-cell-wp-wpwp-conserv} stationary, circular domain and \subref{fig:nexp-cell-wp-wplr-conserv} a moving domain.}
	    			\label{fig:nexp-cell-wp-wpwp-csv}
		    	\end{figure}
      
	    \subsubsection{Bulk-surface wave-pinning model on a moving domain with extracellular ligand field}
	    \label{sec:bswp-moving}
			In the exterior environment $\Xi(t)$, we assume there exists an external guidance cue in the form of freely diffusing chemoattractant ligand molecules. At the cell membrane $\Gamma(t)$, these molecules bind reversibly to membrane bound receptors. These transmembrane receptors have {\em binding domains} facing both the exterior and interior environments, making them an essential component in the response of a cell to an external guidance cue. Upon binding with an extracellular ligand, the receptor undergoes a conformational change resulting in an active ligand-receptor complex. This active complex then triggers downstream signalling events which lead to cellular migration. Although important for the activation of intracellular signalling pathways, the action of the active ligand-receptor complex on downstream signalling components is that of a catalyst and therefore, is not used up in subsequent reactions. Although the intracellular signalling pathways have received a large amount of attention in the literature, the coupling of these pathways to an external guidance cue, on a moving domain, has received far less attention. Usually, the external guidance cue (or equivalently, ligand-receptor complex concentration) is prescribed in the activation of membrane bound species (see e.g. \cite{meinhardt_1999, levine_2006, maree_2012, elliott_2012}) or the interaction is studied on a stationary domain (see e.g. \cite{levchenko_2002, subramanian_2004, garcia-penarrubia_2014}). Recently, Moure and Gomez \cite{moure_2018} proposed a phase-field model for the simulation of obstacle-mediated chemotaxis. The cytoplasmic mechanics of the acto-myosin network were considered to be dependent on a membrane bound {\em activator}, which in turn depends on the membrane concentration of chemoattractant. This suggests that the {\em activator} approximates the entire signalling pathway from extracellular ligand concentration to the intracellular acto-myosin network. Feng \etal \cite{feng_2018} proposed a lattice Boltzmann-particle method for neutrophil migration, where their model comprised four {\em signalling layers}: extracellular chemoattractant and membrane bound receptor; membrane bound G protein dynamics and specific effectors; cytoplasmic and membrane bound Rho family of GTPases; and their downstream effectors. However, the membrane is unfitted to the underlying computational lattice and therefore, an additional {\em lattice-particle map} was required to redefine the subset of the computational lattice (which defines the membrane) as the cell migrates. Within the context of conservation, this procedure would require care to ensure information is not lost under motion. 
			
			We consider a single exterior bulk species $(N_l = 1)$ corresponding to the extracellular chemoattractant ligand and a single surface species $N_{l_s} = 1$, which corresponds to the active ligand-receptor complex. We assume that the material velocity of the extracellular ligand field is given by ${\bfa u}_{\Xi} = {\bfa 0}$. Following \cite{macdonald_2016}, we employ Dirichlet boundary conditions on the outer boundary $\partial{\cal D}(t)$. The exterior bulk reaction term, and boundary conditions for the outer boundary, will be defined shortly. On the inner boundary (cell membrane) $\Gamma(t)$, the bulk boundary conditions \eqref{eq:rd-ext-bulk-ibcs} and surface reaction kinetics \eqref{eq:rd-ext-surf} is given by
			\[
			  g^{(1)}({\bfa l}) = 0, \quad f_{l_s}^{(1)}({\bfa l}_s) = 0, \quad \mbox{and} \quad \hat{g}^{(1)}({\bfa l}, {\bfa l}_s) = \hat{g}_s^{(1)}({\bfa l}, {\bfa l}_s) = k_{-1}l_s^{(1)} - k_1(R_T - l_s^{(1)})l^{(1)},
			\]
			respectively, where $k_1$ is the ligand association rate, $k_{-1}$ is the disassociation rate and $R_T$ is the total concentration of receptors. Following the previous section \S\ref{sec:bswp-stationary}, in the interior environment $\Omega(t)$ we assume there is a single bulk species $(N_c = 1)$ and a single surface species $(N_{c_s} = 1)$. Furthermore, we assume: that in \eqref{eq:rd-int-bulk} $f_c^{(1)}({\bfa c}) = 0$ and $r^{(1)}({\bfa c}) = 0$; and in \eqref{eq:rd-int-surf} $f_{c_s}^{(1)}({\bfa c}_{s}) = 0$. However, contrary to the previous section, the interaction between the interior bulk and surface species are given by the reaction-kinetics
			\[
			  \hat{r}_{s}^{(1)}({\bfa c}, {\bfa c}_{s}, {\bfa l}_s) = \hat{r}^{(1)}({\bfa c}, {\bfa c}_{s}, {\bfa l}_s) = \omega\left( k_{0} + \frac{\gamma \left(c_s^{(1)}\right)^{2}}{K^{2} + \left(c_s^{(1)}\right)^{2}} \right)c^{(1)} - \delta c_s^{(1)} + \varepsilon\frac{l_s^{(1)}}{R_T}c^{(1)},
			\]
			where $\omega \in \RR$ is a length scaling parameter defined in \S\ref{sec:bswp-stationary}, $\varepsilon$ is the rate of stimulation and $\tilde{R} = l_s^{(1)} / R_T$ defines the local fractional receptor occupancy. We assume (initially) that the intracellular material velocity ${\bfa u}_{\Omega} = {\bfa 0}$. However, due to the dynamic motion of the cell membrane considered in this example, ${\bfa u}_{\Gamma} \neq {\bfa 0}$. In fact, in general the material velocity ${\bfa u}_{\Gamma}$ is not known {\em {a-priori}}. As the shape of the curve is determined solely by the normal motion, we assume ${\bfa u}_{\Gamma}\cdot{\bfa t} = 0$.
	    		
	    		The (dimensional) parameters for this model are detailed in Table \ref{tab:bswp-stationary-params} and Table \ref{tab:bswp-moving-params}. The parameters are the same as used in \cite{mori_2008, cusseddu_2019, macdonald_2016, mackenzie_2016}, with the exception of $\varepsilon$ and $\sigma_l$ which are guessed. Furthermore, so that the wave-pinning model has enough time to establish a polarised state, the diffusion parameters $D_{l_s}^{(1)}$ and $D_l^{(1)}$ have been lowered by an order of magnitude (but maintain the order of magnitude of their ratio) compared with those used in \cite{macdonald_2016, mackenzie_2016}. Initially, the extracellular environment $\Xi(0)$ is assumed to be an annulus where the inner boundary $\Gamma(0)$ is a circle of radius $R_0$ and the outer boundary $\partial{\cal D}(0)$ is also a circle of radius $R_{{\rm outer}}$. Therefore, initially the intracellular environment $\Omega(0)$ is a circle of radius $R_0$. The radius of the outer boundary needs to be sufficiently large so that the Dirichlet boundary condition remains valid. Assuming that there are $N_R / (4\pi R_0^2)$ receptors per unit surface area of a spherical cell (number concentration), then the molar concentration of the total number of receptors is calculated using the following formula:
	   		\[
	   			R_T = \left( \frac{N_R}{4\pi R_0^2} \right)\frac{10^{24}}{N_A},
	   		\]
	   		where $N_A = 6.022\times 10^{23}$ is Avogadro's constant and the factor $10^{24}$ arises from the conversion to molar concentration with nanomolar units (nM) and to a length scale with micron units ($\mu$m). For the extracellular ligand and membrane bound ligand-receptor complex, we initially prescribe a homogeneous fractional receptor occupancy level $(\tilde{R}_0)$ which is given in Table \ref{tab:bswp-moving-params}. The initial condition for the extracellular ligand field and the membrane bound ligand-receptor complex is then calculated from the initial fractional receptor occupancy level and the steady state \cite{neilson_2011}:
	   		\[
	   			l^{(1)}({\bfa x},0) = l_0^{(1)} = \frac{K_d\, \tilde{R}_0}{1 - \tilde{R}_0}, \quad \mbox{and} \quad l_s^{(1)}({\bfa x},0) = l_{s,0}^{(1)} = \frac{R_T\,l_0^{(1)}}{K_d + l_0^{(1)}},
	   		\]
	   		where $K_d = k_{-1} / k_1$. The Dirichlet boundary condition in \eqref{eq:rd-ext-bulk-obcs} is then set: $l_{{\cal D}}^{(1)} = l_0^{(1)}$. The ligand source $f_l^{(1)}$ is defined as a Gaussian approximation of a point source:
	   		\begin{equation}
	   			f_l^{(1)}({\bfa x},t) = 10\exp{\left[ -\left( \frac{\left| {\bfa x} - {\bfa x}_{p} \right|^{2}}{2\sg_l} \right) \right]},
	   		\label{eq:bswp-moving-gaussptsrc}
	   		\end{equation}
	   		where ${\bfa x}_{p}$ is the location of the point source. Initially, ${\bfa x}_{p}(0) = (15, 0)$. The point source then changes location when the cell is within a specified distance from ${\bfa x}_{p}$. The point source is then defined to move in a square with sides of length $15$ $\mu$m. The initial conditions for the intracellular bulk-surface wave-pinning model are given by
	   		\begin{align}
	   			c^{(1)}_{s}({\bfa x},0) &= s_l, \nonumber \\
	   			c^{(1)}({\bfa x}, 0) &= c^{(1)}_{0}, \nonumber
	   		\end{align}
	   		where $s_l$ and $c^{(1)}_0$ are given in Table \ref{tab:bswp-stationary-params}. The membrane $\Gamma(t)$ is evolved according to the procedure summarised in \S\ref{sec:meshgen-sf} where the normal velocity \eqref{eq:meshgen-sf-normalvel} is given by \cite{neilson_2011, macdonald_2016}
	   		\[
	   			{\cal V}({\bfa x}, t) = K_{{\rm prot}}c^{(1)}_{s}({\bfa x},t) - \lambda(t)\kappa, \qquad ({\bfa x}, t) \in Q^{\Gamma}_{T},
	   		\]
	   		where $K_{{\rm prot}}$ is the strength of the protrusive force which is proportional to the concentration of the membrane bound active species and $\lambda(t)$ is a time-dependent cortical tension factor found according to \cite{neilson_2011, macdonald_2016}:
	   		\[
	   			\frac{d\lambda}{dt} = \frac{\lambda_0 \lambda}{A_0(\lambda + \lambda_0)}\left[ \upsilon\left(A - A_0\right) + \frac{dA}{dt} \right] - \beta\lambda,
	   		\]
	   		where $\upsilon, \beta \in \RR^{+}$ are positive parameters given in Table \ref{tab:bswp-moving-params}, $\lambda_0 = R_0 K_{{\rm prot}} c^{(1)}_{s,0}$ and $A_0$ is the initial area contained in the cell. As a full investigation of parameter space is beyond the scope of this article, $K_{{\rm prot}}$, $\upsilon$ and $\beta$ are chosen such that the cellular migration remains stable. We set the final time $T = 24000$ and time step size $\Delta t = 0.1$ (so that $N_T = 240000$). The interior and exterior meshes contain $N_e^{\Omega} = 2091$ and $N_e^{\Xi} = 5123$ triangles, respectively. The boundary of the interior mesh (and inner boundary of the exterior mesh) contains ${\cal N}_s^{\Gamma} = 101$ nodes, whilst the outer boundary of the exterior mesh contains ${\cal N}_s^{\partial {\cal D}} = 68$ nodes. For computational efficiency, we employ a graded mesh in the exterior environment, where the elements become larger the further they are from the inner boundary. However, in the interior environment, we employ a uniform triangular mesh. Although the membrane shape is dynamically changing, we fix the outer boundary of the extracellular environment to be a circle which is merely translated so that the cell remains in the centre. The extracellular and intracellular meshes are then evolved according to the procedure outlined in \S\ref{sec:meshgen-blk}.
	   		
	   		\begin{table}[ht]
	   			\centering
	   			
	   			\renewcommand{\arraystretch}{1.5}
	   			\begin{tabular}{| c | c | c || c | c | c || c | c | c |}
	   				\hline
	   				Parameter & Value & Units & Parameter & Value & Units & Parameter & Value & Units \\ \hline\hline
					$k_1$ & $1/30$ & nM$^{-1}$ $s^{-1}$ & $k_{-1}$ & $1.0$ & $s^{-1}$ & $\varepsilon$ & $1.0$ & $s^{-1}$ \\ \hline
					$D_{l_s}^{(1)}$	& $0.01$ & $\mu$m$^{2}$ $s^{-1}$ & $D_{l}^{(1)}$ & $10.0$ & $\mu$m$^{2}$ $s^{-1}$ & $N_{R}$ & $75000$ & - \\ \hline
					$\tilde{R}_0$ & $0.15$ & - & $R_{{\rm outer}}$ & $50.0$ & $\mu$m & $\beta$ & $0.02$ & $s^{-1}$ \\ \hline
					$K_{{\rm prot}}$ & $10^{-5}$ & $\mu$m$^{3}$ $s^{-1}$ mol$^{-1}$ & $\upsilon$ & $1.0$ & $s^{-1}$ & $\sg_l$ & $0.5$ & - \\ \hline
	   			\end{tabular}
	   			\caption{%
	   				Dimensional bulk-surface wave-pinning and ligand-receptor parameters on a moving domain.}
				\label{tab:bswp-moving-params}
	   		\end{table}
	   		
	   		The concentration of the interior active surface species as a function of scaled arc-length around the cell and the intracellular inactive species are illustrated at times $t = 25, 105, 225, 300$ in Fig. \ref{fig:bswp-moving-nt3e3-a}. In contrast to the stationary example, at $t = 25$ the active state displays a broad peak on the side of the cell which is facing the point source and a broad, depleted region in the cytoplasm (Fig. \ref{fig:bswp-moving-nt3e3-a}\subref{fig:bswp-moving-nt3e3-blk-a}). This broad activation becomes sharper at $t = 105$ due to the expansion of the cell in the direction of the point source (Fig. \ref{fig:bswp-moving-nt3e3-a}\subref{fig:bswp-moving-nt3e3-blk-b}) which in turn, produces a localised depleted region in the cytoplasm analagous to the stationary example (Fig. \ref{fig:nexp-cell-wp-wpwp-a}). Due to the sharp peak in the active state, the cell protrudes local to this region trapping the depleted cytoplasmic species thereby creating a {\em cap} of cytoplasmic inactivity local to the protrusion (Figs. \ref{fig:bswp-moving-nt3e3-a}\subref{fig:bswp-moving-nt3e3-blk-c} and \ref{fig:bswp-moving-nt3e3-a}\subref{fig:bswp-moving-nt3e3-blk-d}). Analagous to the stationary case, the peak in the active surface state flattens as time progresses due to the wave of activation travelling around the membrane, eventually pinning to produce a stable pattern.
			
			\begin{figure}
				\centering
	  			\subfloat{%
	    			\label{fig:bswp-moving-nt3e3-sf-a}%
	    				\inc{wplr/wp-lr_int-sf_3.pdf}{0.24\textwidth}}
	  			\subfloat{%
	    			\label{fig:bswp-moving-nt3e3-sf-b}%
	    				\inc{wplr/wp-lr_int-sf_7.pdf}{0.24\textwidth}}
	  			\subfloat{%
	  			\label{fig:bswp-moving-nt3e3-sf-c}%
	    				\inc{wplr/wp-lr_int-sf_13.pdf}{0.24\textwidth}}
			  	\subfloat{%
	    			\label{fig:bswp-moving-nt3e3-sf-d}%
	    				\inc{wplr/wp-lr_int-sf_17.pdf}{0.24\textwidth}}\\
	  			\setcounter{subfigure}{0}
	  			\subfloat[Time $= 25$ $(s)$.]{%
	    			\label{fig:bswp-moving-nt3e3-blk-a}%
		    			\inc{wplr/wp-lr_int-blk_3.png}{0.24\textwidth}}
				\subfloat[Time $= 105$ $(s)$.]{%
	    			\label{fig:bswp-moving-nt3e3-blk-b}%
	    				\inc{wplr/wp-lr_int-blk_7.png}{0.24\textwidth}}
	  			\subfloat[Time $= 225$ $(s)$.]{%
	    			\label{fig:bswp-moving-nt3e3-blk-c}%
	    				\inc{wplr/wp-lr_int-blk_13.png}{0.24\textwidth}}
	  			\subfloat[Time $= 300$ $(s)$.]{%
	    			\label{fig:bswp-moving-nt3e3-blk-d}%
	    				\inc{wplr/wp-lr_int-blk_17.png}{0.24\textwidth}}
	  			\caption[]{%
	    				{\em (Bulk-surface wave-pinning on a moving domain)} Concentrations of membrane bound active state (top row) and cytoplasmic inactive state (bottom row) on a moving domain at times \subref{fig:bswp-moving-nt3e3-blk-a} $t = 25$, \subref{fig:bswp-moving-nt3e3-blk-b} $t = 105$, \subref{fig:bswp-moving-nt3e3-blk-c} $t = 225$, \subref{fig:bswp-moving-nt3e3-blk-d} $t = 300$.}
	  		\label{fig:bswp-moving-nt3e3-a}
			\end{figure}
			
			Figure \ref{fig:bswp-moving-both-blk} illustrates the extracellular ligand concentration and intracellular inactive concentration at later times than those considered in Fig. \ref{fig:bswp-moving-nt3e3-a}. In Fig. \ref{fig:bswp-moving-both-blk}\subref{fig:bswp-moving-both-blk-a} it is clear that the cell is migrating towards the point source located at ${\bfa x}_{p} = (15,0)$. There is a clear {\em cap} of inactivity in the intracellular environment in the direction of the point source, due to the cell protruding in that direction as was seen previously. Once the cell gets sufficiently close to the point source, the point source is moved and the active surface state migrates around the membrane so that it faces the point source. This causes the cytoplasmic {\em cap} of inactivity to also migrate so that it is facing the point source and this can be seen clearest in Figs. \ref{fig:bswp-moving-both-blk}\subref{fig:bswp-moving-both-blk-b} and \ref{fig:bswp-moving-both-blk}\subref{fig:bswp-moving-both-blk-d}. This type of migration is in contrast with the Meinhardt model \cite{meinhardt_1999,neilson_2011,macdonald_2016}, where one expects a new peak of activation to form in the direction of the point source. However, in the wave-pinning model, the activation remains but migrates round the membrane to face the point source (as demonstrated previously in the absence of a freely diffusing external guidance cue \cite{maree_2006, maree_2012}) which is a testament to the stability of the pattern formed by the wave-pinning mechanism. After turning to face the point source, the cell straightens and migrates persistently towards the point source where the point source is again moved when the cell gets sufficiently near (Figs. \ref{fig:bswp-moving-both-blk}\subref{fig:bswp-moving-both-blk-c}-\subref{fig:bswp-moving-both-blk-f}). Analagous to the previous section \S\ref{sec:bswp-stationary}, the colourbar axes in Figs. \ref{fig:bswp-moving-nt3e3-a} and \ref{fig:bswp-moving-both-blk} change as time progresses to emphasize the local cytoplasmic patterning as the cell migrates. Note that due to the robustness of the MMPDE procedure discussed in \S\ref{sec:meshgen-blk}, our algorithm is able to perform the full loop without the need to re-mesh. An illustration of the exterior and interior meshes during migration is given in Fig. \ref{fig:bswp-moving-misc}\subref{fig:bswp-moving-mesh}. Figure \ref{fig:nexp-cell-wp-wpwp-csv}\subref{fig:nexp-cell-wp-wplr-conserv} confirms that the fully discrete numerical solution is globally conservative to machine zero as proven in Theorem \ref{thm:conserv-fd}.
			
			\begin{figure}
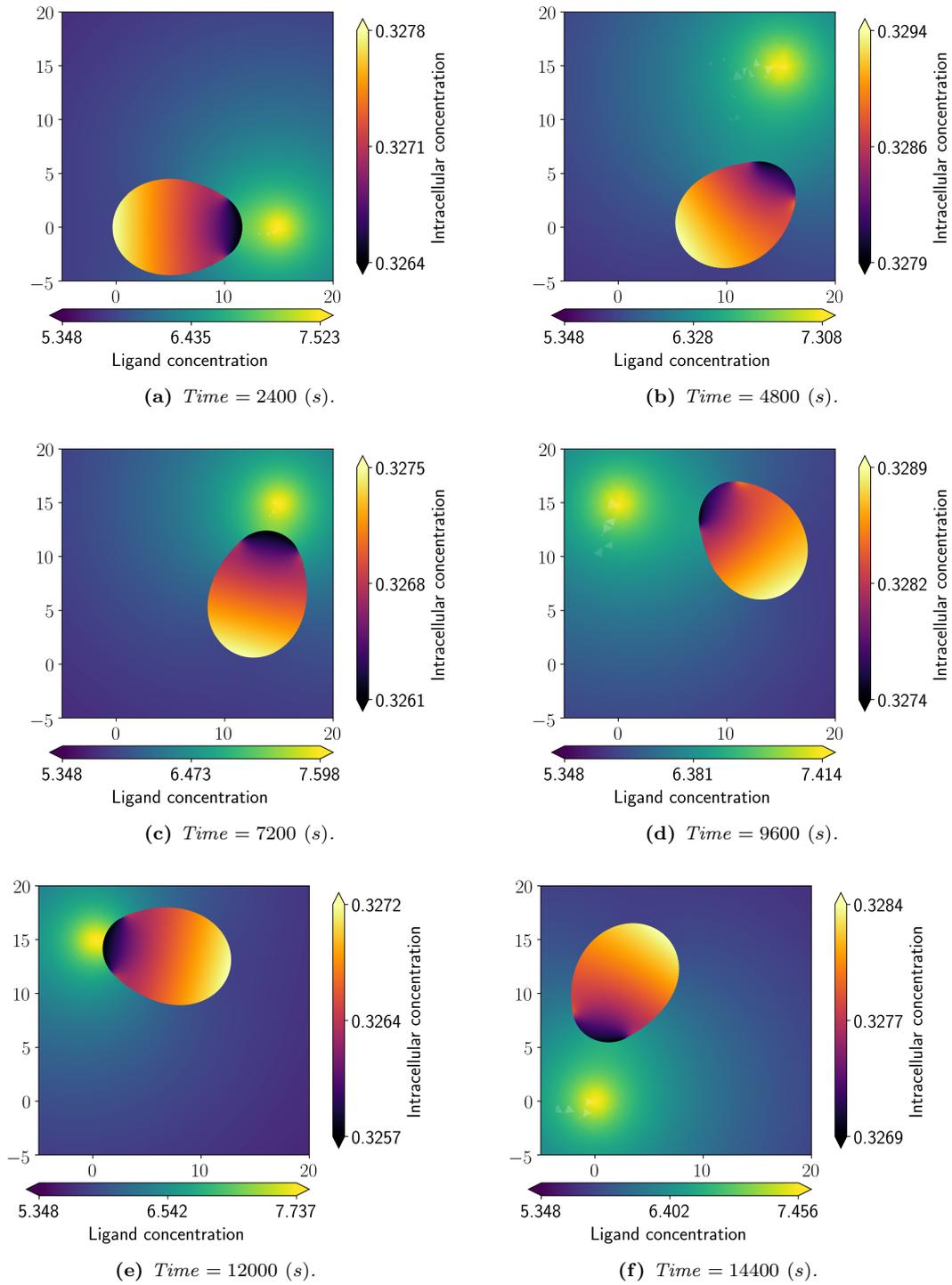

				\centering
	  			\subfloat[Time $= 2400$ $(s)$.]{%
	    			\label{fig:bswp-moving-both-blk-a}%
	    				\inc{wplr/wp-lr_both-blk_extlvl-2e-3_3.png}{0.4\textwidth}}\qquad\quad
	  			\subfloat[Time $= 4800$ $(s)$.]{%
	  			\label{fig:bswp-moving-both-blk-b}%
	  				\inc{wplr/wp-lr_both-blk_extlvl-2e-3_5.png}{0.4\textwidth}}\\
			  	\subfloat[Time $= 7200$ $(s)$.]{%
	    			\label{fig:bswp-moving-both-blk-c}%
	    				\inc{wplr/wp-lr_both-blk_extlvl-2e-3_7.png}{0.4\textwidth}}\qquad\quad
	  			\subfloat[Time $= 9600$ $(s)$.]{%
	    			\label{fig:bswp-moving-both-blk-d}%
	    				\inc{wplr/wp-lr_both-blk_extlvl-2e-3_9.png}{0.4\textwidth}}\\
				\subfloat[Time $= 12000$ $(s)$.]{%
	    			\label{fig:bswp-moving-both-blk-e}%
		    			\inc{wplr/wp-lr_both-blk_extlvl-2e-3_11.png}{0.4\textwidth}}\qquad\quad
	  			\subfloat[Time $= 14400$ $(s)$.]{%
	    			\label{fig:bswp-moving-both-blk-f}%
	    				\inc{wplr/wp-lr_both-blk_extlvl-2e-3_13.png}{0.4\textwidth}}\qquad\quad
	  			\caption[]{%
	    				{\em (Bulk-surface wave-pinning on a moving domain)} Concentrations of extracellular ligand and intracellular inactive state as the cell migrates towards a moving point source.}
	  		\label{fig:bswp-moving-both-blk}
			\end{figure}
			
			For the results depicted in Figs. \ref{fig:bswp-moving-nt3e3-a} and \ref{fig:bswp-moving-both-blk}, we assumed that ${\bfa u}_{\Gamma}\cdot {\bfa t} = 0$. This assumption is reasonable as the {\em shape} of an evolving curve is solely determined by its normal motion. However, in general ${\bfa u}_{\Gamma}$ is not known {\em {a-priori}} and therefore, ${\bfa u}_{\Gamma}\cdot {\bfa t} \neq 0$ is entirely possible. In the context of cell migration, the material velocity can be approximated using a mathematical model of the intracellular fluid dynamics and the acto-myosin network; see for example, \cite{camley_2013, camley_2017, moure_2016, moure_2018}. However, such an implementation would warrant significant changes to the algorithms presented in this article and is therefore, not considered here. Instead, we assume that material velocities for the intracellular and membrane environments are the same as the ALE velocity; that is, ${\bfa u}_{\Gamma} = {\bfa w}$ and ${\bfa u}_{\Omega} = {\bfa w}$. Hence, we have assumed that the inside of the cell behaves like an elastic solid. To emphasize the importance of the material velocity, we sharpen the Gaussian point source \eqref{eq:bswp-moving-gaussptsrc} by reducing the value of $\sg_l = 0.1$. A comparison of the migration of the cell when the material velocities are given by: ${\bfa u}_{\Gamma}\cdot{\bfa t} = 0$ and ${\bfa u}_{\Omega} = {\bfa 0}$; and ${\bfa u}_{\Gamma} = {\bfa w}$ and ${\bfa u}_{\Omega} = {\bfa w}$, is illustrated in Fig. \ref{fig:bswp-moving-misc}\subref{fig:bswp-moving-both-blk-matvel}. Both cells successfully migrate towards the initial location of the ligand point source ${\bfa x}_{p} = (15, 0)$ (which is depicted in Fig. \ref{fig:bswp-moving-misc}\subref{fig:bswp-moving-both-blk-matvel} as a yellow circle). Their migration is slower than depicted in Fig. \ref{fig:bswp-moving-both-blk} due to the sharper Gaussian point source. Once close enough, the point source location is moved to ${\bfa x}_{p} = (15, 15)$ (which is illustrated in Fig. \ref{fig:bswp-moving-misc}\subref{fig:bswp-moving-both-blk-matvel} as the extracellular contour plot). The cell with material velocities ${\bfa u}_{\Gamma}\cdot{\bfa t} = 0$ and ${\bfa u}_{\Omega} = {\bfa 0}$, cannot turn sharply enough to be able to migrate towards the new point source location (right-most cell in Fig. \ref{fig:bswp-moving-misc}\subref{fig:bswp-moving-both-blk-matvel}). Therefore, the cell migrates past the location of the point source. The solid black line in Fig. \ref{fig:bswp-moving-misc}\subref{fig:bswp-moving-both-blk-matvel} depicts the evolution of the cell centroid in time. However, the cell with material velocities ${\bfa u}_{\Gamma} = {\bfa w}$ and ${\bfa u}_{\Omega} = {\bfa w}$ (left-most cell in Fig. \ref{fig:bswp-moving-misc}\subref{fig:bswp-moving-both-blk-matvel}) can turn sharply and in fact, gets sufficiently close to the point source that it moves to a new location ${\bfa x}_{p} = (0,15)$ (depicted as a yellow square in Fig. \ref{fig:bswp-moving-misc}\subref{fig:bswp-moving-both-blk-matvel}). The black dashed line in Fig. \ref{fig:bswp-moving-misc}\subref{fig:bswp-moving-both-blk-matvel} depicts the evolution of the cell centroid in time. The material velocity ${\bfa u}_{\Omega} = {\bfa w}$ ensures that the inactive cytoplasmic species is transported with the cell as it migrates. This produces a more uniform distribution on the inside of the cell (this can be seen in Fig. \ref{fig:bswp-moving-misc}\subref{fig:bswp-moving-both-blk-matvel}). Consequently, the inactive species is more readily available to be converted to membrane bound active species, which in turn enables the cell to adapt more efficiently. The depleted regions found in the cytoplasm local to a protrusion (see Figs. \ref{fig:bswp-moving-nt3e3-a} and \ref{fig:bswp-moving-both-blk}) can therefore act as a local inhibitor to migration.
			
			\begin{figure}
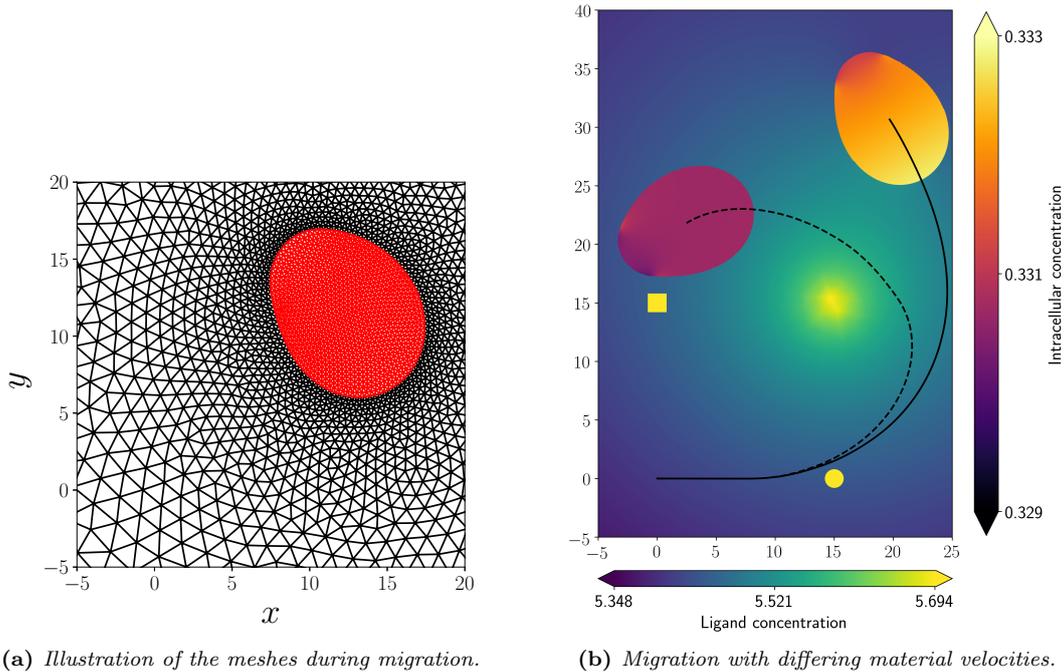

				\centering	    				
	  			\subfloat[Illustration of the meshes during migration.]{%
	    			\label{fig:bswp-moving-mesh}%
	    				\inc{wplr/wp-lr_mesh_9.pdf}{0.4\textwidth}}\qquad\quad
	  			\subfloat[Migration with differing material velocities.]{%
	  			\label{fig:bswp-moving-both-blk-matvel}%
	    				\inc{wplr/wp-lr_mat-vel_both-blk_extlvl-2e-3_1.png}{0.42\textwidth}}
	  			\caption[]{%
	    				\subref{fig:bswp-moving-mesh} Illustration of the exterior and interior meshes during motion. \subref{fig:bswp-moving-both-blk-matvel} Comparison of the migration of two cells with different material velocities; the solid black line depicts the evolution of the cell centroid in time when ${\bfa u}_{\Gamma}\cdot{\bfa t} = 0$ and ${\bfa u}_{\Omega} = 0$ and the dashed black line depicts the evolution of the cell centroid in time when ${\bfa u}_{\Gamma} = {\bfa w}$ and ${\bfa u}_{\Omega} = {\bfa w}$; the yellow circle and square indicate the initial and third positions of the point source, respectively, whilst the extracellular contour plot indicates the second position of the point source; it is clear that when ${\bfa u}_{\Gamma} = {\bfa w}$ and ${\bfa u}_{\Omega} = {\bfa w}$ the cell can turn more effectively.}
	  		\label{fig:bswp-moving-misc}
			\end{figure}

  \section{Conclusions}
  \label{sec:concl}
    Bulk-surface mass-conservative reaction-diffusion systems are a promising framework for modelling GTPase cycling leading to cell polarisation and subsequently cell migration and chemotaxis. We have established theoretically that a recently developed finite element ALE scheme is globally conservative at the fully discrete level and this property holds independently of the chosen time step and of the domain movement. To our knowledge this is the first scheme to be shown to have this desirable property. Furthermore, we have shown that the numerical scheme appears to be second-order accurate when applied to test cases with known exact solutions. The method has been applied to a bulk-surface WP model for cell polarisation in combination with a receptor-ligand model to describe a signalling cascade from external stimulus to downstream cell protrusive activity. The modelled cell has been shown to display efficient chemotaxis even when the source of chemoattractant changes location.  Interestingly, numerical experiments suggest that the accuracy of a model cell to direct its motion can be affected by the bulk material velocity responsible for the transport of cytoplasmic proteins.  

The main theoretical result of this paper concerns the conservation property of the fully discrete ALE finite element scheme. Future work will focus on the numerical stability and convergence of the scheme. Previous studies on the stability of discretisations of reaction-diffusion equations on evolving and growing domains suggest that discretisations of conservative formulations of the governing equations lead to schemes with superior stability characteristics \cite{mackenzie_2007, mackenzie_2011, mackenzie_2012}. While we have used a relatively simple model for ligand-receptor binding to demonstrate the possibilities of the computational framework, the same techniques could be used to investigate the effect of local extracellular ligand depletion by membrane bound enzymes and intracelluar signalling pathways \cite{mackenzie_2016}. The method could also be used to consider the effect of cell shape changes on intracellular signalling from the cell cytoplasm to the cell nucleus \cite{giese_2018, cangiani_2010, sturrock_2012}.  In future work we also plan to couple the GTPase module to a mechanical model for the bulk material velocity which will be driven by the intracellular forces generated by the interaction of actin and myosin and substrate adhesion. Models of this type range in sophistication from purely viscous flow models \cite{shao_2012, george_2013} to models which take into consideration the orientation of actin filaments \cite{kruse_2004, kruse_2005, marth_2015}.

  \section*{Acknowledgements}
    CFR was supported by a Cancer Research UK Multidisciplinary Project Award C22713/A20017. Core support for RHI was provided by Cancer Research UK, grant number 15672.



\begin{thebibliography}{10}

\bibitem{camley_2013}
B.~A. Camley, Y.~Zhao, B.~Li, H.~Levine, and W.-J. Rappel.
\newblock Periodic {Migration} in a {Physical} {Model} of {Cells} on
  {Micropatterns}.
\newblock {\em Physical Review Letters}, 111(15):158102, 2013.

\bibitem{camley_2017}
B.~A. Camley, Y.~Zhao, B.~Li, H.~Levine, and W.-J. Rappel.
\newblock Crawling and turning in a minimal reaction-diffusion cell motility
  model: {Coupling} cell shape and biochemistry.
\newblock {\em Physical Review E}, 95(1):012401, 2017.

\bibitem{campbell_2017}
E.~J. Campbell and P.~Bagchi.
\newblock A computational model of amoeboid cell swimming.
\newblock {\em Physics of Fluids}, 29(10):101902, 2017.

\bibitem{campbell_2018}
E.~J. Campbell and P.~Bagchi.
\newblock A computational model of amoeboid cell motility in the presence of
  obstacles.
\newblock {\em Soft Matter}, 14(28):5741--5763, 2018.

\bibitem{cangiani_2010}
A.~Cangiani and R.~Natalini.
\newblock A spatial model of cellular molecular trafficking including active
  transport along microtubules.
\newblock {\em Journal of Theoretical Biology}, 267(4):614--625, 2010.

\bibitem{cusseddu_2019}
D.~Cusseddu, L.~Edelstein-Keshet, J.~Mackenzie, S.~Portet, and A.~Madzvamuse.
\newblock A coupled bulk-surface model for cell polarisation.
\newblock {\em Journal of Theoretical Biology}, 481:119--135, 2019.

\bibitem{davis_nodate}
T.~A. Davis.
\newblock {SuiteSparse}: {A} {Suite} of {Sparse} {Matrix} {Packages}.
\newblock http://www.suitesparse.com.

\bibitem{davis_2004}
T.~A. Davis.
\newblock Algorithm 832: {UMFPACK} {V}4.3---an unsymmetric-pattern multifrontal
  method.
\newblock {\em ACM Transactions on Mathematical Software}, 30(2):196--199,
  2004.

\bibitem{davis_2006}
T.~A. Davis.
\newblock {\em Direct methods for sparse linear systems}.
\newblock Fundamentals of algorithms. Society for Industrial and Applied
  Mathematics, Philadelphia, 2006.

\bibitem{diegmiller_2018}
R.~Diegmiller, H.~Montanelli, C.~B. Muratov, and S.~Y. Shvartsman.
\newblock Spherical {Caps} in {Cell} {Polarization}.
\newblock {\em Biophysical Journal}, 115(1):26--30, 2018.

\bibitem{dvinsky_1991}
A.~S. Dvinsky.
\newblock Adaptive grid generation from harmonic maps on {Riemannian}
  manifolds.
\newblock {\em Journal of Computational Physics}, 95(2):450--476, 1991.

\bibitem{dziuk_2007}
G.~Dziuk and C.~M. Elliott.
\newblock Finite elements on evolving surfaces.
\newblock {\em IMA Journal of Numerical Analysis}, 27(2):262--292, 2007.

\bibitem{dziuk_2013}
G.~Dziuk and C.~M. Elliott.
\newblock Finite element methods for surface {PDEs}.
\newblock {\em Acta Numerica}, 22:289--396, 2013.

\bibitem{edelstein-keshet_2016}
L.~Edelstein-Keshet.
\newblock Flipping the {Rac}-{Rho} {Switch} in {Cell} {Motility}.
\newblock {\em Cell Systems}, 2(1):10--12, 2016.

\bibitem{elliott_2013}
C.~M. Elliott and T.~Ranner.
\newblock Finite element analysis for a coupled bulk-surface partial
  differential equation.
\newblock {\em IMA Journal of Numerical Analysis}, 33(2):377--402, 2013.

\bibitem{elliott_2012}
C.~M. Elliott, B.~Stinner, and C.~Venkataraman.
\newblock Modelling cell motility and chemotaxis with evolving surface finite
  elements.
\newblock {\em Journal of The Royal Society Interface}, 9(76):3027--3044, 2012.

\bibitem{elliott_2012-1}
C.~M. Elliott and V.~Styles.
\newblock An {ALE} {ESFEM} for {Solving} {PDEs} on {Evolving} {Surfaces}.
\newblock {\em Milan Journal of Mathematics}, 80(2):469--501, 2012.

\bibitem{feng_2018}
S.~Feng, L.~Zhou, Y.~Zhang, S.~Lü, and M.~Long.
\newblock Mechanochemical modeling of neutrophil migration based on four
  signaling layers, integrin dynamics, and substrate stiffness.
\newblock {\em Biomechanics and Modeling in Mechanobiology}, 17(6):1611--1630,
  2018.

\bibitem{garcia-penarrubia_2014}
P.~García-Peñarrubia, J.~J. Gálvez, and J.~Gálvez.
\newblock Mathematical modelling and computational study of two-dimensional and
  three-dimensional dynamics of receptor–ligand interactions in signalling
  response mechanisms.
\newblock {\em Journal of Mathematical Biology}, 69(3):553--582, 2014.

\bibitem{george_2013}
U.~Z. George, A.~Stéphanou, and A.~Madzvamuse.
\newblock Mathematical modelling and numerical simulations of actin dynamics in
  the eukaryotic cell.
\newblock {\em Journal of Mathematical Biology}, 66(3):547--593, 2013.

\bibitem{giese_2015}
W.~Giese, M.~Eigel, S.~Westerheide, C.~Engwer, and E.~Klipp.
\newblock Influence of cell shape, inhomogeneities and diffusion barriers in
  cell polarization models.
\newblock {\em Physical Biology}, 12(6):066014, 2015.

\bibitem{giese_2018}
W.~Giese, G.~Milicic, A.~Schröder, and E.~Klipp.
\newblock Spatial modeling of the membrane-cytosolic interface in protein
  kinase signal transduction.
\newblock {\em PLOS Computational Biology}, 14(4):e1006075, 2018.

\bibitem{glazier_1993}
J.~A. Glazier and F.~Graner.
\newblock Simulation of the differential adhesion driven rearrangement of
  biological cells.
\newblock {\em Physical Review E}, 47(3):2128--2154, 1993.

\bibitem{goryachev_2008}
A.~B. Goryachev and A.~V. Pokhilko.
\newblock Dynamics of {Cdc}42 network embodies a {Turing}-type mechanism of
  yeast cell polarity.
\newblock {\em FEBS Letters}, 582(10):1437--1443, 2008.

\bibitem{halatek_2018-1}
J.~Halatek, F.~Brauns, and E.~Frey.
\newblock Self-organization principles of intracellular pattern formation.
\newblock {\em Philosophical Transactions of the Royal Society B: Biological
  Sciences}, 373(1747):20170107, 2018.

\bibitem{halatek_2018}
J.~Halatek and E.~Frey.
\newblock Rethinking pattern formation in reaction–diffusion systems.
\newblock {\em Nature Physics}, 14(5):507--514, 2018.

\bibitem{holmes_2012-1}
W.~R. Holmes and L.~Edelstein-Keshet.
\newblock A {Comparison} of {Computational} {Models} for {Eukaryotic} {Cell}
  {Shape} and {Motility}.
\newblock {\em PLoS Computational Biology}, 8(12):e1002793, 2012.

\bibitem{huang_2001}
W.~Huang.
\newblock Practical {Aspects} of {Formulation} and {Solution} of {Moving}
  {Mesh} {Partial} {Differential} {Equations}.
\newblock {\em Journal of Computational Physics}, 171(2):753--775, 2001.

\bibitem{huang_1994}
W.~Huang, Y.~Ren, and R.~D. Russell.
\newblock Moving {Mesh} {Partial} {Differential} {Equations} ({MMPDES}) {Based}
  on the {Equidistribution} {Principle}.
\newblock {\em SIAM Journal on Numerical Analysis}, 31(3):709--730, 1994.

\bibitem{huang_1997}
W.~Huang and R.~D. Russell.
\newblock Analysis of {Moving} {Mesh} {Partial} {Differential} {Equations} with
  {Spatial} {Smoothing}.
\newblock {\em SIAM Journal on Numerical Analysis}, 34(3):1106--1126, 1997.

\bibitem{huang_1998}
W.~Huang and R.~D. Russell.
\newblock Moving {Mesh} {Strategy} {Based} on a {Gradient} {Flow} {Equation}
  for {Two}-{Dimensional} {Problems}.
\newblock {\em SIAM Journal on Scientific Computing}, 20(3):998--1015, 1998.

\bibitem{jilkine_2007}
A.~Jilkine, A.~F.~M. Marée, and L.~Edelstein-Keshet.
\newblock Mathematical {Model} for {Spatial} {Segregation} of the
  {Rho}-{Family} {GTPases} {Based} on {Inhibitory} {Crosstalk}.
\newblock {\em Bulletin of Mathematical Biology}, 69(6):1943--1978, 2007.

\bibitem{kruse_2004}
K.~Kruse, J.~F. Joanny, F.~Jülicher, J.~Prost, and K.~Sekimoto.
\newblock Asters, {Vortices}, and {Rotating} {Spirals} in {Active} {Gels} of
  {Polar} {Filaments}.
\newblock {\em Physical Review Letters}, 92(7), 2004.

\bibitem{kruse_2005}
K.~Kruse, J.~F. Joanny, F.~Jülicher, J.~Prost, and K.~Sekimoto.
\newblock Generic theory of active polar gels: a paradigm for cytoskeletal
  dynamics.
\newblock {\em The European Physical Journal E}, 16(1):5--16, 2005.

\bibitem{levchenko_2002}
A.~Levchenko and P.~A. Iglesias.
\newblock Models of {Eukaryotic} {Gradient} {Sensing}: {Application} to
  {Chemotaxis} of {Amoebae} and {Neutrophils}.
\newblock {\em Biophysical Journal}, 82(1):50--63, 2002.

\bibitem{levine_2006}
H.~Levine, D.~A. Kessler, and W.-J. Rappel.
\newblock Directional sensing in eukaryotic chemotaxis: {A} balanced
  inactivation model.
\newblock {\em Proceedings of the National Academy of Sciences},
  103(26):9761--9766, 2006.

\bibitem{macdonald_2016}
G.~MacDonald, J.~A. Mackenzie, M.~Nolan, and R.~H. Insall.
\newblock A computational method for the coupled solution of
  reaction–diffusion equations on evolving domains and manifolds:
  {Application} to a model of cell migration and chemotaxis.
\newblock {\em Journal of Computational Physics}, 309:207--226, 2016.

\bibitem{mackenzie_2011}
J.~A. Mackenzie and A.~Madzvamuse.
\newblock Analysis of stability and convergence of finite-difference methods
  for a reaction-diffusion problem on a one-dimensional growing domain.
\newblock {\em IMA Journal of Numerical Analysis}, 31(1):212--232, 2011.

\bibitem{mackenzie_2007}
J.~A. Mackenzie and W.~R. Mekwi.
\newblock An analysis of stability and convergence of a finite-difference
  discretization of a model parabolic {PDE} in 1d using a moving mesh.
\newblock {\em IMA Journal of Numerical Analysis}, 27(3):507--528, 2007.

\bibitem{mackenzie_2012}
J.~A. Mackenzie and W.~R. Mekwi.
\newblock An unconditionally stable second-order accurate {ALE}-{FEM} scheme
  for two-dimensional convection-diffusion problems.
\newblock {\em IMA Journal of Numerical Analysis}, 32(3):888--905, 2012.

\bibitem{mackenzie_2016}
J.~A. Mackenzie, M.~Nolan, and R.~H. Insall.
\newblock Local modulation of chemoattractant concentrations by single cells:
  dissection using a bulk-surface computational model.
\newblock {\em Interface Focus}, 6(5):20160036, 2016.

\bibitem{mackenzie_2019}
J.~A. Mackenzie, M.~Nolan, C.~F. Rowlatt, and R.~H. Insall.
\newblock An {Adaptive} {Moving} {Mesh} {Method} for {Forced} {Curve}
  {Shortening} {Flow}.
\newblock {\em SIAM Journal on Scientific Computing}, 41(2):A1170--A1200, 2019.

\bibitem{madzvamuse_2016}
A.~Madzvamuse and A.~H. Chung.
\newblock The bulk-surface finite element method for reaction–diffusion
  systems on stationary volumes.
\newblock {\em Finite Elements in Analysis and Design}, 108:9--21, 2016.

\bibitem{madzvamuse_2015}
A.~Madzvamuse, A.~H.~W. Chung, and C.~Venkataraman.
\newblock Stability analysis and simulations of coupled bulk-surface
  reaction-diffusion systems.
\newblock {\em Proceedings of the Royal Society A: Mathematical, Physical and
  Engineering Sciences}, 471(2175):20140546, 2015.

\bibitem{marth_2015}
W.~Marth, S.~Praetorius, and A.~Voigt.
\newblock A mechanism for cell motility by active polar gels.
\newblock {\em Journal of The Royal Society Interface}, 12(107):20150161, 2015.

\bibitem{maree_2012}
A.~F.~M. Marée, V.~A. Grieneisen, and L.~Edelstein-Keshet.
\newblock How {Cells} {Integrate} {Complex} {Stimuli}: {The} {Effect} of
  {Feedback} from {Phosphoinositides} and {Cell} {Shape} on {Cell}
  {Polarization} and {Motility}.
\newblock {\em PLoS Computational Biology}, 8(3):e1002402, 2012.

\bibitem{maree_2006}
A.~F.~M. Marée, A.~Jilkine, A.~Dawes, V.~A. Grieneisen, and
  L.~Edelstein-Keshet.
\newblock Polarization and {Movement} of {Keratocytes}: {A} {Multiscale}
  {Modelling} {Approach}.
\newblock {\em Bulletin of Mathematical Biology}, 68(5):1169--1211, 2006.

\bibitem{meinhardt_1999}
H.~Meinhardt.
\newblock Orientation of chemotactic cells and growth cones: models and
  mechanisms.
\newblock {\em Journal of Cell Science}, 112:2867--2874, 1999.

\bibitem{mori_2008}
Y.~Mori, A.~Jilkine, and L.~Edelstein-Keshet.
\newblock Wave-{Pinning} and {Cell} {Polarity} from a {Bistable}
  {Reaction}-{Diffusion} {System}.
\newblock {\em Biophysical Journal}, 94(9):3684--3697, 2008.

\bibitem{mori_2011}
Y.~Mori, A.~Jilkine, and L.~Edelstein-Keshet.
\newblock Asymptotic and {Bifurcation} {Analysis} of {Wave}-{Pinning} in a
  {Reaction}-{Diffusion} {Model} for {Cell} {Polarization}.
\newblock {\em SIAM Journal on Applied Mathematics}, 71(4):1401--1427, 2011.

\bibitem{moure_2016}
A.~Moure and H.~Gomez.
\newblock Computational model for amoeboid motion: {Coupling} membrane and
  cytosol dynamics.
\newblock {\em Physical Review E}, 94(4), 2016.

\bibitem{moure_2017}
A.~Moure and H.~Gomez.
\newblock Phase-field model of cellular migration: {Three}-dimensional
  simulations in fibrous networks.
\newblock {\em Computer Methods in Applied Mechanics and Engineering},
  320:162--197, 2017.

\bibitem{moure_2018}
A.~Moure and H.~Gomez.
\newblock Three-dimensional simulation of obstacle-mediated chemotaxis.
\newblock {\em Biomechanics and Modeling in Mechanobiology}, 17(5):1243--1268,
  2018.

\bibitem{narang_2001}
A.~Narang, K.~K. Subramanian, and D.~A. Lauffenburger.
\newblock A {Mathematical} {Model} for {Chemoattractant} {Gradient} {Sensing}
  {Based} on {Receptor}-{Regulated} {Membrane} {Phospholipid} {Signaling}
  {Dynamics}.
\newblock {\em Annals of Biomedical Engineering}, 29(8):677--691, 2001.

\bibitem{neilson_2011}
M.~P. Neilson, J.~A. Mackenzie, S.~D. Webb, and R.~H. Insall.
\newblock Modeling {Cell} {Movement} and {Chemotaxis} {Using}
  {Pseudopod}-{Based} {Feedback}.
\newblock {\em SIAM Journal on Scientific Computing}, 33(3):1035--1057, 2011.

\bibitem{novak_2014}
I.~L. Novak and B.~M. Slepchenko.
\newblock A conservative algorithm for parabolic problems in domains with
  moving boundaries.
\newblock {\em Journal of Computational Physics}, 270:203--213, 2014.

\bibitem{otsuji_2007}
M.~Otsuji, S.~Ishihara, C.~Co, K.~Kaibuchi, A.~Mochizuki, and S.~Kuroda.
\newblock A {Mass} {Conserved} {Reaction}–{Diffusion} {System} {Captures}
  {Properties} of {Cell} {Polarity}.
\newblock {\em PLoS Computational Biology}, 3(6):e108, 2007.

\bibitem{persson_2004}
P.-O. Persson and G.~Strang.
\newblock A {Simple} {Mesh} {Generator} in {MATLAB}.
\newblock {\em SIAM Review}, 46(2):329--345, 2004.

\bibitem{peskin_1972}
C.~S. Peskin.
\newblock Flow patterns around heart valves: {A} numerical method.
\newblock {\em Journal of Computational Physics}, 10(2):252--271, 1972.

\bibitem{ramirez_2015}
S.~A. Ramirez, S.~Raghavachari, and D.~J. Lew.
\newblock Dendritic spine geometry can localize {GTPase} signaling in neurons.
\newblock {\em Molecular Biology of the Cell}, 26(22):4171--4181, 2015.

\bibitem{rappel_2017}
W.-J. Rappel and L.~Edelstein-Keshet.
\newblock Mechanisms of cell polarization.
\newblock {\em Current Opinion in Systems Biology}, 3:43--53, 2017.

\bibitem{ratz_2012}
A.~Rätz and M.~Röger.
\newblock Turing instabilities in a mathematical model for signaling networks.
\newblock {\em Journal of Mathematical Biology}, 65(6-7):1215--1244, 2012.

\bibitem{sakumura_2005}
Y.~Sakumura, Y.~Tsukada, N.~Yamamoto, and S.~Ishii.
\newblock A {Molecular} {Model} for {Axon} {Guidance} {Based} on {Cross} {Talk}
  between {Rho} {GTPases}.
\newblock {\em Biophysical Journal}, 89(2):812--822, 2005.

\bibitem{shao_2012}
D.~Shao, H.~Levine, and W.-J. Rappel.
\newblock Coupling actin flow, adhesion, and morphology in a computational cell
  motility model.
\newblock {\em Proceedings of the National Academy of Sciences},
  109(18):6851--6856, 2012.

\bibitem{strychalski_2010-1}
W.~Strychalski, D.~Adalsteinsson, and T.~C. Elston.
\newblock A cut-cell method for simulating spatial models of biochemical
  reaction networks in arbitrary geometries.
\newblock {\em Communications in Applied Mathematics and Computational
  Science}, 5(1):31--53, 2010.

\bibitem{strychalski_2010}
W.~Strychalski, D.~Adalsteinsson, and T.~C. Elston.
\newblock Simulating {Biochemical} {Signaling} {Networks} in {Complex} {Moving}
  {Geometries}.
\newblock {\em SIAM Journal on Scientific Computing}, 32(5):3039--3070, 2010.

\bibitem{sturrock_2012}
M.~Sturrock, A.~J. Terry, D.~P. Xirodimas, A.~M. Thompson, and M.~A.~J.
  Chaplain.
\newblock Influence of the {Nuclear} {Membrane}, {Active} {Transport}, and
  {Cell} {Shape} on the {Hes}1 and p53–{Mdm}2 {Pathways}: {Insights} from
  {Spatio}-temporal {Modelling}.
\newblock {\em Bulletin of Mathematical Biology}, 74(7):1531--1579, 2012.

\bibitem{subramanian_2004}
K.~Subramanian and A.~Narang.
\newblock A mechanistic model for eukaryotic gradient sensing: {Spontaneous}
  and induced phosphoinositide polarization.
\newblock {\em Journal of Theoretical Biology}, 231(1):49--67, 2004.

\bibitem{thalmeier_2016}
D.~Thalmeier, J.~Halatek, and E.~Frey.
\newblock Geometry-induced protein pattern formation.
\newblock {\em Proceedings of the National Academy of Sciences},
  113(3):548--553, 2016.

\bibitem{vanderlei_2011}
B.~Vanderlei, J.~J. Feng, and L.~Edelstein-Keshet.
\newblock A {Computational} {Model} of {Cell} {Polarization} and {Motility}
  {Coupling} {Mechanics} and {Biochemistry}.
\newblock {\em Multiscale Modeling \& Simulation}, 9(4):1420--1443, 2011.

\end{thebibliography}
  \bibliographystyle{abbrv}
  
\end{document}